\theoremstyle{definition}
\newtheorem{definition}{Definition}[section]
\newtheorem{rmk}[definition]{Remark}
\theoremstyle{plain}
\newtheorem{theorem}[definition]{Theorem}
\newtheorem{prop}[definition]{Proposition}
\newtheorem{cor}[definition]{Corollary}
\newtheorem{lem}[definition]{Lemma}
\newcommand{\mb}{\mathbb}
\newcommand{\mc}{\mathcal}
\newcommand{\bs}{\boldsymbol}
\newcommand{\SL}{\textup{SL}}
\newcommand{\R}{\mathbb{R}}
\newcommand{\Z}{\mathbb{Z}}
\newcommand{\N}{\mathbb{N}}
\newcommand{\dd}{\textup{d}}
\newcommand{\ignore}[1]
\newif\ifdraft\drafttrue
\newcommand\eq[2]{{\ifdraft{\ \tt [#1]}\else\ignorespaces\fi}\begin{equation}\label{eq:#1}{#2}\end{equation}}
\newcommand {\equ}[1]     {\eqref{eq:#1}}
\numberwithin{equation}{section}
\newcommand\reallywidehat[1]{%
\savestack{\tmpbox}{\stretchto{%
  \scaleto{%
    \scalerel*[\widthof{\ensuremath{#1}}]{\kern-.6pt\bigwedge\kern-.6pt}%
    {\rule[-\textheight/2]{1ex}{\textheight}}%WIDTH-LIMITED BIG WEDGE
  }{\textheight}% 
}{0.5ex}}%
\stackon[1pt]{#1}{\tmpbox}%
}
\begin{document}

\thanks{RF was supported by SNSF grant 200021--182089 and LMS Early Career Fellowship ECF-1920-15. DK was supported by NSF grants DMS-1900560 and DMS-2155111. This material is based upon work supported by a grant from the Institute for Advanced Study School
of Mathematics.}

\title{On Multiplicatively Badly Approximable Vectors }

\author[Reynold Fregoli]{Reynold Fregoli}
\address{{Department of Mathematics, University of Z\"urich, Switzerland}} 
\email{{reynold.fregoli@math.uzh.ch}}

\author[Dmitry Kleinbock]{Dmitry Kleinbock}
\address{Department of Mathematics, Brandeis University, 
Waltham MA, USA} 
\email{kleinboc@brandeis.edu}

\subjclass{11J13, 11J83, 11H06, 37A44}

\begin{abstract}
{Let $\langle x\rangle$ denote the distance from $x\in\mb{R}$ to the set of integers $\mb{Z}$.}
The Littlewood Conjecture states that for all pairs %of real numbers 
$(\alpha,\beta)\in\mb{R}^{2}$ the product $q\langle q\alpha\rangle\langle q\beta\rangle$ attains values arbitrarily close to $0$ as $q\in\mb{N}$ tends to infinity. Badziahin showed that if a factor $\log q\cdot \log\log q$ is added to the product, the same statement becomes false. In this paper, we generalise Badziahin's result to vectors $\bs{\alpha}\in\mb{R}^{d}$, replacing the function $\log q\cdot \log\log q$ by $(\log q)^{d-1}\cdot\log\log q$ for any $d\geq 2$, and thereby obtaining a new proof in the case $d=2$. Our approach is based on a new version of the well-known Dani Correspondence {between Diophantine approximation and} dynamics on the space of lattices, especially adapted to the study of products of rational approximations. We believe that this correspondence is of independent interest.
\end{abstract}

\maketitle

\section{Introduction}

\subsection{{The} Main Result}

The Littlewood Conjecture states that for all pairs of real numbers $(\alpha,\beta)\in\mb{R}^{2}$
\begin{equation}
\label{eq:Lit}
\liminf_{\substack{q\to \infty \\ q\in\mb Z}}q\langle q\alpha\rangle\langle q\beta\rangle=0,
\end{equation}
where the expression $\langle\cdot\rangle$ denotes the distance to the nearest integer. {This conjecture is to date} widely open. It is %trivial to show
{well-known}  that (\ref{eq:Lit}) holds for Lebesgue-almost-every pair $(\alpha,\beta)\in\mb R^2$. {Indeed, if $(\alpha_0,\beta_0)$ is a counterexample to (\ref{eq:Lit}), then both $\alpha_0$ and $\beta_0$ are forced to be badly approximable -- a zero measure condition. }%$(\alpha,\beta)\in\mb R^2$
%\footnote{Note that if $(\alpha,\beta)$ is a counterexample to (\ref{eq:Lit}), then trivially both $\alpha$ and $\beta$ have to be badly approximable --- a zero-measure condition.}, and 
Some stronger partial results are also known to hold \cite{CasSwy, PoVe}. {Perhaps the most striking of these} was %perhaps 
obtained in \cite{EKL}, where it was established that the set of pairs $(\alpha,\beta)$ contradicting (\ref{eq:Lit}) has Hausdorff dimension %at most 
zero. 

A more general version of the Littlewood Conjecture asserts that for all $d\geq 2$ and all vectors ${\bs{\alpha} = (\alpha_1,\dots,\alpha_d)}\in\mb{R}^{d}$ one has that
\begin{equation}
\label{eq:Litgen}
\liminf_{q\to \infty}q\langle q\alpha_{1}\rangle \dotsm\langle q\alpha_{d}\rangle =0.
\end{equation}
When $d\geq 3$, this statement is %weaker than 
{implied by} (\ref{eq:Lit}). Despite this, (\ref{eq:Litgen}) remains in question even for $d\geq 3$. The vectors $\bs{\alpha}\in\mb{R}^{d}$ which do not satisfy (\ref{eq:Litgen}) are known in the literature as \textit{multiplicatively badly approximable} \cite{Bug, LV}.

In this paper we will be investigating a partial converse of (\ref{eq:Litgen}), where the factor $q$ is replaced with an increasing function $f(q)$. More specifically, we will be interested in determining the minimal growth rate of the function $f$ for which there exist vectors $\bs{\alpha}\in\mb{R}^{d}$ satisfying 
\begin{equation}
\label{eq:Litconv}
\liminf_{q\to \infty}f(q)\langle q\alpha_{1}\rangle \dotsm\langle q\alpha_{d}\rangle >0.
\end{equation}

This problem has a rich history. The earliest known result in this regard was established by Gallagher \cite{Gal}, who showed that if the series $\sum_{q\in\mb{N}}f(q)^{-1}(\log q)^{d-1}$ diverges, Inequality (\ref{eq:Litconv}) fails for Lebesgue-almost-every vector $\bs{\alpha}\in\mb{R}^{d}$. Conversely, the Borel-Cantelli Lemma easily implies that if the same series converges, (\ref{eq:Litconv}) holds for almost every $\bs{\alpha}\in\mb{R}^{d}$. In view of this, studying (\ref{eq:Litconv}) becomes particularly interesting under the assumption that the series $\sum_{q\in\mb{N}}f(q)^{-1}(\log q)^{d-1}$ diverges. If this occurs, the set of vectors $\bs{\alpha}\in\mb{R}^{d}$ satisfying (\ref{eq:Litconv}) is Lebesgue-null, and to determine whether such set is non-empty, it is often necessary to construct some Cantor-type set. This approach has been employed in the literature to estimate the Hausdorff dimension of a variety of "limsup sets" defined by Diophantine properties (see, e.g., \cite{DRV}).

The first progress towards establishing (\ref{eq:Litconv}) in the divergence case is due to Moshchevitin and Bugeaud \cite{BM}, who proved that the set of pairs $(\alpha,\beta)\in[0,1]^{2}$ for which
\begin{equation}
\label{eq:MosBug}
\liminf_{q\to \infty}q(\log q)^{2}\cdot \langle q\alpha\rangle\langle q\beta\rangle>0
\end{equation}
has full Hausdorff dimension (note that $\sum_{q}(q\log q)^{-1}$ is a divergent series). A few years later, Badziahin \cite{Bad} could significantly improve upon (\ref{eq:MosBug}), showing that the set of %pairs 
$(\alpha,\beta)\in\mb{R}^{2}$ satisfying the stronger condition
\begin{equation}
\label{eq:Badz}
\liminf_{q\to \infty}q\log q\log\log q\cdot \langle q\alpha\rangle\langle q\beta\rangle>0
\end{equation}
also has full Hausdorff dimension. To the authors' knowledge, no further progress has been made in decreasing the growth rate of the function $f$ or extending the result to higher-dimension, up until the present day. It is worth noting that the minimal growth rate for $f$, when $n=2$, is conjectured to be of order $\log x$ (see \cite[Conjecture L2]{BV}). 

%The methods used in \cite{Bad} largely derive from an earlier work of Badziahin, Pollington, and Velani, where the authors successfully settled a long-standing conjecture of Schmidt \cite{BPV}. The key ingredients in their proof are a novel, more flexible, Cantor-type set construction and related dimensional estimates. An intermediate result, preceding \cite{Bad}, was also achieved in \cite{BV}, where Badziahin and Velani proved (\ref{eq:Badz}) in the $p$-adic setting. However, despite striking progress towards a full solution of Schmidt's Conjecture in higher dimension and over manifolds \cite{Ber15, Yan}, little progress has been made in the setting of (\ref{eq:Litconv}) for $d\geq 3$. The only known result in this regard is \cite[Proposition 1.5]{Fre2}, where the first author generalized (\ref{eq:MosBug}) to higher dimension.

Our main result in the present paper is a full generalisation of (\ref{eq:Badz}) to the case $d\geq 3$.

\begin{theorem}
\label{thm:thm1}
Let $d\geq 1$. Then for any box $B\subset[0,1]^{d}$ the set of vectors $\bs{\alpha}\in B$ such that
$$\liminf_{q\to \infty}q (\log q)^{d-1}\log\log q\cdot\langle q\alpha_{1}\rangle \dotsm\langle q\alpha_{d}\rangle >0$$
has full Hausdorff dimension.
\end{theorem}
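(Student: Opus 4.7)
My approach combines a Dani-style reformulation of the Diophantine inequality with a Cantor-like set construction of full Hausdorff dimension inside $B$, generalising Badziahin's $d=2$ argument. The first step is to set up the ``new Dani correspondence'' alluded to in the abstract. Letting $u_{\boldsymbol{\alpha}}$ denote the lower-unitriangular matrix with first column $(1,\boldsymbol{\alpha})^T$ and $\Lambda_{\boldsymbol{\alpha}} = u_{\boldsymbol{\alpha}} \mathbb{Z}^{d+1}$, the lattice vectors of the form $(q, q\alpha_1 - p_1, \ldots, q\alpha_d - p_d)$ have coordinate product $q \prod_i |q\alpha_i - p_i|$. The liminf condition of Theorem \ref{thm:thm1} is therefore equivalent to a lower bound on the coordinate product of non-zero vectors of $\Lambda_{\boldsymbol{\alpha}}$, naturally interpreted through excursions of the multi-parameter diagonal flow $g_{\mathbf{t}} = \mathrm{diag}(e^{t_1+\cdots+t_d}, e^{-t_1}, \ldots, e^{-t_d})$ on the space of unimodular lattices in $\mathbb{R}^{d+1}$.

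The second step is to build a nested sequence $\mathcal{C}_0 \supset \mathcal{C}_1 \supset \cdots$ of unions of $d$-cubes inside $B$, with level-$n$ cubes of side $R^{-n}$ for a large integer parameter $R$. From each parent cube at level $n-1$ I remove all level-$n$ child subcubes whose centers admit some $q \in [R^{n-1}, R^n]$ and integer vector $\boldsymbol{p}$ with $q \prod_i |q\alpha_i - p_i| < c / ((\log q)^{d-1} \log\log q)$ for a sufficiently small constant $c>0$. The crucial counting estimate decomposes the bad $\boldsymbol{\alpha}$ by the dyadic scale $\boldsymbol{\psi} = (2^{-k_1}, \ldots, 2^{-k_d})$ of $\|q\alpha_i\|$: for each $\boldsymbol{\psi}$ the bad set is a union of axis-parallel boxes of side-lengths $\psi_i/q$ around rationals, contributing $O(\prod_i(1 + R^{-(n-1)} q \psi_i))$ rationals per parent cube; and the number of dyadic tuples $\boldsymbol{\psi}$ meeting $\prod_i \psi_i < c/(q (\log q)^{d-1} \log\log q)$ is $O((\log q)^{d-1})$, which is the arithmetic origin of the exponent $d-1$ appearing in the theorem. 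The $\log\log q$ factor in the threshold is precisely calibrated so that the total number of removed subcubes per parent cube, summed over $q$, is controlled by a summable-in-$n$ tail.

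With this counting in hand, a full-dimension criterion for generalised Cantor sets in the spirit of Badziahin--Velani yields $\dim_H \bigcap_n \mathcal{C}_n = d$, and by construction this intersection lies inside the set described in Theorem \ref{thm:thm1}. The main obstacle I anticipate is the counting argument itself: handling simultaneously the multi-scale decomposition in the $d$ coordinates $\|q\alpha_i\|$ and the dyadic decomposition in $q$ requires a delicate combinatorial balancing, with the factor $(\log q)^{d-1}$ emerging from the $(d-1)$-dimensional family of dyadic tuples $\boldsymbol{\psi}$ of (nearly) fixed product. Verifying that the removed-subcube count is uniformly controlled across all parent cubes of the previous level, and that the local rational-point contributions in each box interact correctly with the Cantor-rich criterion, is the technical heart of the proof. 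This is exactly where Badziahin's two-dimensional argument must be genuinely extended rather than merely transcribed, and is the step in which the dynamical reformulation of Step 1 plays an essential organisational role.
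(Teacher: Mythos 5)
Your Step 1 (the lattice/Dani reformulation) matches the paper's in spirit, but from Step 2 onward the proposal diverges from the paper's proof and, more importantly, has a genuine structural gap in exactly the place you identify as the "technical heart."

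First, the paper does \emph{not} run a direct $d$-dimensional Cantor construction. Theorem \ref{thm:thm1} is deduced from Theorem \ref{prop:mainres}, which is proved by induction on $d$ via the Marstrand slicing lemma: one fixes $(\alpha_2,\dots,\alpha_d)$ in the good $(d-1)$-dimensional set supplied by the inductive hypothesis, and constructs a \emph{one}-dimensional $r_k$-Cantor-rich set in the variable $x=\alpha_1$. The paper also needs to carry both the simultaneous and the dual conditions (\textup{Mad} and $\textup{Mad}^*$) through the induction, because the key counting step estimates the first minimum of a lattice and of its dual together via Mahler's transference theorem. None of this inductive machinery appears in your outline.

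Second, and more seriously, your counting scheme is indexed the wrong way and would fail as stated. You remove level-$n$ subcubes whose centers are bad for some $q\in[R^{n-1},R^n]$. But for $q$ near $R^n$ the dangerous regions for extreme dyadic tuples $\boldsymbol\psi$ (say $\psi_1=1$ and $\psi_2,\dots,\psi_d$ tiny) are unions of slabs of thickness $\psi_i/q\ll R^{-n}$ spaced $1/q\approx R^{-n}$ apart; essentially every level-$n$ subcube is touched by such a slab, so the naive "subcube meets a bad box $\Rightarrow$ remove it" rule removes everything, even though the total \emph{measure} of the bad set is tiny. The paper avoids this by organising removal by the \emph{size} of the dangerous interval (via the time $T=t+t_1$, cf.\ Lemmas \ref{lem:danlengthdual} and \ref{lem:danlength}), ensuring that only dangerous intervals of length $\geq$ the current subinterval length are acted upon at a given level; very thin dangerous regions are postponed to deeper levels. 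Indexing by $q$-range, as you propose, does not give this.

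Third, even after reorganising by box size, the dyadic-tuple heuristic overcounts. The naive bound of $O((\log q)^{d-1})$ tuples per $q$ is correct, but it gives no control on how the corresponding dangerous intervals \emph{cluster} — and uniform equidistribution of the rationals $\boldsymbol{p}/q$ relative to the subcube lattice cannot be assumed for the specific $(\alpha_2,\dots,\alpha_d)$ you are trying to keep. The paper resolves this in Lemmas \ref{lem:alblocksdual}--\ref{lem:alblockssim2}: the number of dangerous intervals meeting a block of $N$ consecutive test intervals is shown to be at most $N$ once $N\gg e^{(l+1)^2R(t)}$, and the proof uses the arithmetic--geometric mean inequality \eqref{eq:AM-GM}, the inductive hypotheses \eqref{eq:multinductivesim}--\eqref{eq:multinductivedual} for the fixed $(\alpha_2,\dots,\alpha_d)$, and — crucially, when some coordinate of the lattice vector vanishes — the first minimum of the \emph{dual} lattice together with Mahler's theorem (Theorem \ref{thm:Ban}). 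This dual-minimum trick is the genuinely new ingredient, and it is why $\textup{Mad}$ and $\textup{Mad}^*$ must be proved together. Your proposal records that a "delicate combinatorial balancing" is needed, but it neither supplies that balancing nor sets up the inductive and dual structure that makes it possible. Without those, the Cantor-rich estimate cannot be obtained from the dyadic decomposition alone.
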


Theorem \ref{thm:thm1} relies on a new multiplicative version of the so-called Dani correspondence: in essence, a way of recasting the problem in terms of the dynamics of the diagonal action on the space of unimodular lattices. The advantage of this approach consists in the fact that it naturally extends from dimension $2$ to general dimension $d$, unlike the methods of Badziahin, which appear to apply only in dimension $2$. We are also able to extend our construction to the case of linear forms in arbitrary dimension, as we proceed to explain in the following sub-section.

\subsection{The Dual Case} 

Form now on, let $|x|_+$ denote the quantity $\max (|x|,1)$ for $x\in\R$, and let $\Pi_+(\bs{x})$ denote the product $\prod_{x_i\ne 0}|x_i|$ for any vector $\bs{x} = (x_1,\dots,x_d)\in\R^d$. Note that, when $\bs{x}\in\Z^d$, we have that 
$\Pi_+(\bs{x}) = \prod_{i}|x_i|_+$. The Littlewood conjecture admits a well known dual version, where the pair $(\alpha,\beta)$ plays the role of a linear form. Namely, for all pairs $(\alpha,\beta)\in\mb{R}^{2}$ it is conjectured that
\begin{equation}
\label{eq:dualLit}
\liminf_{|\bs{q}|\to \infty}\Pi_+(\bs{q})
%q_{1}^{+}q_{2}^{+}
\langle\alpha q_{1}+\beta q_{2}\rangle =0,
\end{equation}
where %$x^{+}$ stands for $\max\{1,|x|\}$ for $x\in\mb{R}$, and 
$\bs{q}=(q_1,q_2)\in\mb{Z}^{2}$. It is proved in \cite{CasSwy} that (\ref{eq:dualLit}) is in fact equivalent to (\ref{eq:Lit}). Equation (\ref{eq:Lit}) is often referred to as the "simultaneous case" of the conjecture, as opposed to the "dual case" (\ref{eq:dualLit}).

In \cite{Bad}, Badziahin proved that the set of pairs $(\alpha,\beta)\in\mb{R}^{2}$ simultaneously satisfying (\ref{eq:Badz}) and the "dual" inequality
\begin{equation}
\label{eq:BadzDual}
\liminf_{|\bs{q}|\to \infty}\Pi_+(\bs{q})\log^{+}\left(q_{1}q_{2}\right) \log^{+}\log^{+}\left(q_{1}q_{2}\right) \langle\alpha q_{1}+\beta q_{2}\rangle >0
\end{equation}
has full Hausdorff dimension. Here and hereafter, we use the symbol $\log^{+}(x)$ to denote the function $\log \max\{x,e\}$ for any $x\geq 0$. The most general form of our result extends also (\ref{eq:BadzDual}) to higher dimension.

Let us introduce some notation. Let $d\geq 1$, %let $A\subset [0,1]^{d}$, 
and let $h:\mb{N}\to {(0,\infty)}$ be any function. Consider the sets\footnote{The name Mad was proposed by Badziahin and Velani in \cite{BV} and stands for multiplicatively badly approximable.}
$${\textup{Mad}(d%,A
,h):=\left\{\bs{\alpha}\in \R^{d}:{\inf_{q\ne0}|q| h\left(|q|\right)}\langle q\alpha_{1}\rangle \dotsm\langle q\alpha_{d}\rangle >0\right\}}$$
and
$${\textup{Mad}^{*}(d,%A,
h):=\left\{\bs{\alpha}\in \R^{d}:{\inf_{\bs{q}\ne\bs{0}}}\Pi_+(\bs{q})h\left(\Pi_+(\bs{q})\right)\langle\bs{q}\cdot\bs{\alpha}\rangle >0\right\}.}$$

We show that the intersection of these two sets has full Hausdorff dimension for $d\geq 2$ and $$h(x)=h_{d}(x):=(\log^{+}x)^{d-1}\log^{+}\log^{+}x.$$ Additionally, we prove a similar result for the set of vectors $\bs{\alpha}\in\mb{R}^{d}$ such that each $l$-dimensional sub-vector of $\bs{\alpha}$ lies in the set $\textup{Mad}(l%,A
,h_l)\cap\textup{Mad}^{*}(l,h_l)$ for $l=1,\dotsc,d$. {For a nonempty subset $S$ of $\{1,\dotsc,d\}$ let us denote by} $\pi_{S}:\mb{R}^{d}\to \mb{R}^{\#S}$   the projection onto the coordinates with indices lying in the set $S$ (with the convention that $\pi_{\{1,\dotsc,d\}}=\textup{id}$). The precise statement of our most general result reads as follows.

\begin{theorem}
\label{prop:mainres}
Let $d\geq 2$. Then {the intersection of the set}
\begin{equation}
\label{eq:mainres}
{\bigcap_{l=1}^{d}\bigcap_{\#S=l}\pi_{S}^{-1}\left(\textup{Mad}(l,%\pi_{S}(B),
h_{l}%(x)
)\cap\textup{Mad}^{*}(l,%\pi_{S}(B),
h_{l}%(x)
)\right)%\cap B
}
\end{equation} {with  any box $B\subset%[0,1]
\R^{d}$} %the set
has full Hausdorff dimension.
\end{theorem}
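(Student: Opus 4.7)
The plan is to combine the new variant of the Dani correspondence tailored to products of rational approximations, as introduced earlier in the paper, with a higher-dimensional Cantor-type construction generalising Badziahin's approach from $d=2$ to arbitrary $d\geq 2$ and to all sub-vector projections $\pi_S$ at once. The overall strategy is to build inductively a nested family of finite unions of sub-boxes of $B$, at scales $Q_n$ growing doubly exponentially, whose intersection is a Cantor-type set lying in \equ{mainres} and of full Hausdorff dimension.

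First, I would use the new Dani correspondence to translate each condition $\pi_S(\bs\alpha)\in\textup{Mad}(l,h_l)\cap\textup{Mad}^{*}(l,h_l)$ into a statement about the orbit of the unimodular lattice associated with $\pi_S(\bs\alpha)$ under the full positive diagonal semigroup $A^{+}$ of $\SL_{l+1}(\R)$. In this encoding, the simultaneous and dual cases correspond to two opposite directions in $A^{+}$ exchanged by the adjoint action of the Weyl element, so both can be captured uniformly by a single "cusp excursion" condition governed by $h_l$. This makes the geometric picture symmetric under $\textup{Mad}\leftrightarrow\textup{Mad}^{*}$ and reduces the whole theorem to controlling the set of $\bs\alpha\in B$ whose lattice orbits avoid a certain shrinking sequence of compact neighbourhoods of the cusp at a controlled rate, uniformly in $l$ and $S$.

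Second, I would carry out the Cantor-type construction along scales $Q_n=2^{2^n}$. At stage $n$, each surviving box of $K_{n-1}$ is subdivided into sub-boxes of side length of order $Q_n^{-1}$, and those containing some $\bs\alpha$ for which a rational approximation at scale $Q_n$ violates the $h_{|S|}$-inequality for some non-empty $S$ are discarded. The central counting input is the classical bound
$$\#\bigl\{\bs{q}\in\Z^{l}:\Pi_+(\bs{q})\leq Q\bigr\}=O\!\left(Q(\log^{+} Q)^{l-1}\right),$$
which is matched exactly by the exponent of $\log^{+}$ appearing in $h_l$. Each bad integer vector $\bs{q}$ defines a slab of measure $O\!\left(Q^{-1}h_l(Q)^{-1}\right)$ in $\R^{l}$, so summing over $\bs{q}$ and over all $S$ yields a total removal budget of order $(\log^{+}\!\log^{+} Q_n)^{-1}$; this is just slack enough along the chosen doubly exponential scales to leave a positive proportion of boxes at every stage, after which a standard mass-distribution argument of Frostman type delivers full Hausdorff dimension for $\bigcap_n K_n$.

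The main obstacle is the simultaneous handling of all $2^{d}-1$ non-trivial projections together with both the simultaneous and dual inequalities: a slab removed for $\pi_S$ in $\R^{l}$ pulls back to a cylinder in $\R^{d}$ that may intersect many sub-boxes of $K_{n-1}$, so one must avoid double-counting and verify that the combined removed measure stays strictly below the critical threshold. I expect that the new Dani correspondence is precisely what allows this step to go through uniformly: rather than counting each Diophantine condition separately, one unifies them as a single excursion-rate condition in the space of lattices, at which point the combinatorial bookkeeping becomes an induction on $d$ combined with an optimised choice of the scales $Q_n$ and a careful matching of the $h_l$ factors across projections.
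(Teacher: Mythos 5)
Your proposal correctly isolates the two main ingredients --- the cone-dependent Dani correspondence and a Cantor-type construction --- and the Weyl-element observation is sound as far as it goes (the simultaneous and dual lattices are geometric duals, which the paper indeed exploits). But the counting step, which is the heart of the argument, contains a genuine gap. You estimate the \emph{total measure} of the bad slabs at scale $Q_n$ as $O((\log^+\!\log^+Q_n)^{-1})$ and conclude that a positive proportion of boxes survives at every stage. This is not enough: a bound on the aggregate measure of the removed set says nothing about how it distributes among the sub-boxes of a fixed box $J\in K_{n-1}$; the bad slabs can concentrate inside a few boxes and empty them out entirely, in which case the Cantor set dies locally no matter how small the global budget is. (And since the target set is Lebesgue-null, no global-measure argument can in principle keep up.) What one needs is a clustering statement of the form proved in the paper's ``counting on average'' lemmas --- for a fixed multi-time $\bs{t}$, any block of $N$ consecutive intervals of the appropriate size is hit by at most $\asymp N$ dangerous intervals, with explicit control of the smallest $N$ for which this holds. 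That is what makes the local characteristic $\Delta_k$ in the Badziahin--Velani machinery a fixed small constant proportional to $\kappa$, rather than something degrading in $k$.

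Proving the clustering property is where the real work lies, and it is not obtained from the classical count $\#\{\bs{q}:\Pi_+(\bs{q})\le Q\}=O\big(Q(\log Q)^{l-1}\big)$. It requires a lower bound for the successive minima of the lattices $a(\bs{t},t)\Lambda_{\bs{\alpha}}$, which the paper gets via the arithmetic--geometric mean inequality fed by the inductive hypothesis on smaller $d$ and $l$; and when a shortest vector has vanishing coordinates (so AM--GM is useless because unimodularity no longer controls the product) the decisive extra input is to bound the first minima of the lattice \emph{and} of its dual simultaneously, transferring between the two with Mahler's theorem. This dual-minima step is the novel core of the proof, and it also explains why the dual and simultaneous counting arguments (Lemmas \ref{lem:alblocksdual}, \ref{lem:alblockssim}, \ref{lem:alblockssim2}, with a separate ``degenerate'' subcase) are genuinely different and cannot be folded into a single cusp-excursion condition. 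Finally, the paper does not build the Cantor set directly in a $d$-dimensional box: it slices via the Marstrand lemma and carries out an induction on $d$, producing a one-variable Cantor-rich set inside an interval for each fixed choice of the remaining coordinates. This sidesteps exactly the slab-versus-cylinder double-counting you identify as the main obstacle, and it is also what makes the inductive hypothesis available in the form needed for the minima estimates. Resolving the bookkeeping by ``an optimised choice of scales'' does not supply the missing clustering input and would not close the gap.
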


Theorem \ref{prop:mainres} clearly implies Theorem \ref{thm:thm1}. 

\begin{rmk}
\label{rmk:ref}
It should be noted that, even in dimension $d=2$, this result is new. In fact, it is shown in \cite[Theorem 2]{Bad} that for any $\alpha\in\textup{Bad}$ (i.e., for any badly approximable number) the set
$$\textup{Mad}(
h_{2}
)\cap\textup{Mad}^{*}(
h_{2})\cap \left(\alpha\times [0,1)\right)$$
has full Hausdorff dimension. In Theorem \ref{prop:mainres}, on the other hand, it is enough to assume that $\alpha\in \textup{Mad}(1,h_1)\supsetneq\textup{Bad}$. To explain the reason for this strengthening, we recall that Badziahin's approach (see \cite{Bad}) consists in removing intervals of the form
$$\left\{x\in[0,1):\left|x-\frac{p}{q}\right|\leq\frac{1}{\log q\log\log q \cdot q\langle q\alpha\rangle}\right\}$$
for different integers $p,q$, with $q\neq 0$. The width of each interval depends on the magnitude of the product $q\langle q\alpha\rangle$ and, thus, this quantity needs to be tightly controlled (in particular, bounded below). In this paper, conversely, sets are removed according to a new correspondence with dynamics (see Section \ref{sec:MultDani}), where the quantity $q\langle q\alpha\rangle$ is replaced by an expression of the form $e^{t+t_1}$, which requires no lower bound. Here, $t$ and $t_1$ denote certain times in a diagonal flow of rank $2$. The important role played by the sum $t+t_1$ will be apparent in Section \ref{sec:countingonav}.
\end{rmk}

\subsection{Further Applications}\label{appl}

{Fix $m,n\in\mb{N}$ and ${Y = (y_{ij})}\in\mb{R}^{m\times n}$. For $\bs{Q} = (Q_{1},\dots, \\ Q_{n})$ $\in[1,\infty)^{n}$} define
$$X(\bs{Q}):=\prod_{i=1}^{n}[-Q_{i},Q_{i}]$$
and put
\begin{equation}
S_{{{Y}}}(\bs{Q}):=\sum_{\substack{\bs{q}\in X(\bs{Q}) \\ \bs{q}\neq\bs{0}}}\frac{1}{\langle{Y}_{1}%\cdot
\bs{q}\rangle \dotsm\langle{Y}_{m}%\cdot
\bs{q}\rangle }\nonumber
\end{equation}
{(here and hereafter ${Y}_{i}$ denotes the $i$-th row of the matrix ${{Y}}$)}. 
%\comm{I removed $\cdot$ because it's just a matrix product of a row by a column, but if you want to have it, feel free to put it back.} 
To ensure that %the function 
$S_{{{Y}}}(\bs Q)$ is well-defined, let us assume that for each $i=1,\dotsc, m$ the numbers {$1,y_{i1},\dotsc, y_{in}$} are linearly independent over $\mb Z$.

Functions such as $S_{{{Y}}}(\bs Q)$ are known in the literature as sums of reciprocals of fractional parts and are widely studied both in Diophantine approximation and in the theory of uniform distribution modulo $1$ (see \cite{BHV} and \cite{Bec94}). In general, one is interested in establishing bounds for %the function 
$S_{{{Y}}}(\bs{Q})$ when the matrix ${{Y}}$ is "typical" and in determining the deviation from the expected growth rate for "exceptional" choices of $Y$. In \cite{LV}, L{\^e} and Vaaler proved a general lower bound for the growth rate of %the function 
$S_{{{Y}}}$, showing that for \emph{all} matrices ${{Y}}\in\mb R^{m\times n}$ it holds that
\begin{equation}
\label{eq:LeVa}
S_{{{Y}}}(\bs{Q})\geq c\cdot (Q_{1}\dotsm Q_{n})^{n}\log(Q_{1}\dotsm Q_{n})^{m},
\end{equation}
where the constant $c>0$ only depends on $m$ and $n$ \cite[Corollary 1.2]{LV}. L\^e and Vaaler further proved that the converse inequality holds when the matrix ${{Y}}$ is multiplicatively badly approximable (i.e., contradicts a more general version of (\ref{eq:Litgen})). Since the existence of such matrices is not known, they posed the question of whether the bound in (\ref{eq:LeVa}) is sharp. 

In a series of works by Widmer and the first author \cite{Wid, Fre1, Fre3}, it was shown that multiplicative bad approximability is an unnecessarily restrictive condition to prove the sharpness of (\ref{eq:LeVa}). The most general result in this direction is \cite[Theorem 1.3]{Fre3}, which we recall here for the convenience of the reader: let $\phi$ be some {positive} non-increasing function. Then for any matrix ${{Y}}\in\mb R^{m\times n}$ satisfying the inequality
\begin{equation}
\label{eq:multbad}
\Pi_+(\bs{q})\langle{Y}_1%\cdot
 \bs{q}\rangle \dotsm\langle{Y}_m%\cdot
  \bs{q}\rangle \geq\phi\left(\Pi_+(\bs{q})\right)
\end{equation}
for all $\bs{q}\in\mb{Z}^{n}\setminus\{\bs{0}\}$, it holds that
\begin{equation}
\label{eq:fregobound}
S_{{{Y}}}(\bs{Q})\leq c'\cdot \left( \bar Q^{n}\left(\log\frac{\bar Q}{\phi(\bar Q)}\right)^{m}+\frac{\bar Q^{n}}{\phi(\bar Q)}\left(\log\frac{\bar Q}{\phi(\bar Q)}\right)^{m-1}\right) 
\end{equation}
for all $\bs Q\in(2,+\infty]^n$.
%=(Q_1,\dotsc, Q_n)$, 
Here, $c'>0$ is a constant only depending on $m$ and $n$, and $\bar Q$ is defined as $(Q_1\dotsm Q_n)^{1/n}$.

In view of (\ref{eq:fregobound}), Theorem \ref{prop:mainres} admits the following straightforward corollary.

\begin{cor}
\label{cor:cor}
There exists a full Hausdorff dimension set of vectors $\bs{\alpha}\in\mb{R}^{d}$ such that for all $Q\geq 27$ and all $\bs{Q}\in[2,\infty)^{d}$ it holds {that}
\begin{equation}
S_{\bs\alpha}(Q)\ll_{\bs{\alpha}}Q(\log Q)^{2(d-1)}\log\log Q,\nonumber
\end{equation}
and, simultaneously,
\begin{equation}
S_{\bs{\alpha}^{\scriptscriptstyle{T}}}(\bs{Q})\ll_{\bs{\alpha}}(Q_{1}\dotsm Q_{d})^{d}\log(Q_{1}\dotsm Q_{d})^{d-1}\log%^{+}
\log(Q_{1}\dotsm Q_{d}).\nonumber
\end{equation}
\end{cor}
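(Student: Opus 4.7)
The proof is a direct deduction from Theorem \ref{prop:mainres} combined with the bound \eqref{eq:fregobound} from \cite{Fre3}. The plan splits into three steps. First, I would apply Theorem \ref{prop:mainres} in its $l=d$ instance to produce, inside any prescribed box $B\subset\R^{d}$, a set of full Hausdorff dimension consisting of vectors $\bs\alpha$ lying simultaneously in $\textup{Mad}(d,h_{d})\cap\textup{Mad}^{*}(d,h_{d})$. For such a vector there is a constant $C=C(\bs\alpha)>0$ with
$$
q\,h_{d}(q)\,\|q\alpha_{1}\|\dotsm\|q\alpha_{d}\|\geq C\quad\text{for every }q\in\Z\setminus\{0\},
$$
and
$$
\Pi_{+}(\bs q)\,h_{d}\bigl(\Pi_{+}(\bs q)\bigr)\,\|\bs q\cdot\bs\alpha\|\geq C\quad\text{for every }\bs q\in\Z^{d}\setminus\{\bs 0\}.
$$

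Second, I would set $\phi(x):=C/h_{d}(x)=C/\bigl((\log^{+}x)^{d-1}\log^{+}\log^{+}x\bigr)$ and recognise each of the two inequalities above as an instance of \eqref{eq:multbad}: the first for $Y=\bs\alpha$ viewed as a $d\times 1$ column matrix (so $m=d$, $n=1$), and the second for $Y=\bs{\alpha}^{\scriptscriptstyle{T}}$ viewed as a $1\times d$ row matrix (so $m=1$, $n=d$). With these identifications, the inequality \eqref{eq:fregobound} applies directly to $S_{\bs\alpha}(Q)$ and $S_{\bs{\alpha}^{\scriptscriptstyle{T}}}(\bs Q)$ respectively.

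Third, I would substitute the chosen $\phi$ into \eqref{eq:fregobound}. Since $\phi$ decays only polylogarithmically, one has $\log(Q/\phi(Q))\asymp\log Q$ and $Q/\phi(Q)\asymp Q\,h_{d}(Q)=Q(\log Q)^{d-1}\log\log Q$ for $Q$ large. In the simultaneous case the two summands on the right of \eqref{eq:fregobound} become, up to multiplicative constants depending on $\bs\alpha$, of order $Q(\log Q)^{d}$ and $Q(\log Q)^{2(d-1)}\log\log Q$; since $2(d-1)\geq d$ for $d\geq 2$, the second term dominates and yields the first bound of the corollary (the hypothesis $Q\geq 27$ serves only to ensure $\log^{+}\log^{+} Q>0$ and to absorb boundary constants). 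In the dual case, writing $Q:=(Q_{1}\dotsm Q_{d})^{1/d}$, the two summands become proportional to $Q^{d}\log Q$ and $Q^{d}(\log Q)^{d-1}\log\log Q$; the dominant one, after using $Q^{d}=Q_{1}\dotsm Q_{d}$ and $\log Q\asymp\log(Q_{1}\dotsm Q_{d})$, yields the second claim. The entire substantive content sits in Theorem \ref{prop:mainres}; the present deduction presents no serious obstacle beyond bookkeeping the exponents coming from $m$ and $n$.
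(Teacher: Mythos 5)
Your proof takes exactly the route the paper intends: invoke Theorem \ref{prop:mainres} to get a full-dimension set of $\bs\alpha\in\textup{Mad}(d,h_d)\cap\textup{Mad}^*(d,h_d)$, identify the two inequalities as instances of \eqref{eq:multbad} for $(m,n)=(d,1)$ and $(1,d)$ with $\phi\asymp 1/h_d$, and feed them into \eqref{eq:fregobound}. Two remarks on the comparison with the printed proof. First, your choice $\phi(x)=C/h_d(x)$ is the right one; the paper writes $\phi(x)=(\log x\log\log x)^{-1}$, which equals a constant times $1/h_d$ only when $d=2$, and for $d>2$ membership in $\textup{Mad}(d,h_d)$ does \emph{not} justify \eqref{eq:multbad} with that larger $\phi$ — so your version effectively repairs a slip. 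Second, your dual-case computation genuinely produces $(Q_1\dotsm Q_d)\,\bigl(\log(Q_1\dotsm Q_d)\bigr)^{d-1}\log\log(Q_1\dotsm Q_d)$, since $Q^n=Q_1\dotsm Q_d$ in \eqref{eq:fregobound}; the corollary as printed carries an extra power $(Q_1\dotsm Q_d)^{d-1}$, which (together with the matching exponent in \eqref{eq:LeVa}) looks like a typographical slip in the paper rather than a defect in your argument, but you should flag the mismatch explicitly instead of asserting without comment that your estimate ``yields the second claim.''
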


\begin{proof}
Let $\phi(x):=(\log x)^{-(d-1)}(\log\log x)^{-1}$. Then for $m=1$ Condition (\ref{eq:multbad}) is equivalent to $\bs{\alpha}\in\textup{Mad}^{*}(d,%\mb{R}^{n},
\phi^{{-1}})$, whereas for $n=1$ Condition (\ref{eq:multbad}) reduces to $\bs\alpha\in \textup{Mad}(d,%\mb{R}^{m},
\phi^{{-1}})$. The proof follows from (\ref{eq:fregobound}) and Theorem \ref{prop:mainres}.{ Note that the conditions $Q\geq 27$ (resp., $Q_i\geq 2$) ensure that $\log\log Q$ (resp., $\log%^{+}
\log(Q_{1}\dotsm Q_{d}))$ are strictly positive.}  
%\comm{Is it clear why one needs $Q\geq 27$ and $|Q_i| \ge 2$?} \reynew{The condition $Q\geq 27$ ensures that $\log\log$ is strictly positive. The same holds for $Q_i\geq 2$.}
\end{proof}

This result is almost optimal when $(m,n)=(1,2)$ or $(m,n)=(2,1)$, in the sense that the upper bound only differs by a double logarithm factor from (\ref{eq:LeVa}). This, however, follows also from \cite[Theorem 1.3]{Fre3}, (\ref{eq:Badz})  and (\ref{eq:BadzDual}). The truly new bounds are those in dimension $d\geq 2$. These are further away from the minimal conjectural growth rate of $S_{{{Y}}}(\bs{Q})$, differing from (\ref{eq:LeVa}) by a logarithmic factor. Nonetheless, to the best of the authors' knowledge, they are new to the literature.

\subsection{A Multiplicative Version of the Dani Correspondence}
\label{subsec:Dani}

The proof of Theorem \ref{prop:mainres} rests on a new version of the 
{correspondence between Diophantine approximation and dynamics on the space of lattices}. This correspondence is streamlined specifically to study products of rational approximations and it will likely serve as a starting point to approach a variety of Diophantine problems in the multiplicative set-up. In view of this, we believe it to be of independent interest and we proceed to introduce it in this section. 

{Fix a function $\psi:[1,\infty)\to(0,1]$ 
%be a continuous non-increasing  function.% and let $A\subset\mb{R}^{m\times n}$. 
and, for $T\geq 1$, 
%consider the system of inequalities 
%\begin{equation}\label{diophproduct}
%\begin{cases}
%\prod_{i=1}^{m}|Y_{i}\bs{q}-p_{i}|<\psi(T), \\
%\Pi_{+}(\bs{q})<T.
%\end{cases}
%\end{equation}
%Then
define the sets
\begin{equation}
{\mc{S}_{m,n}^{\times}(%A,
\psi,T):= \\
\left\{Y\in \mb{R}^{m\times n}:\exists\,\bs{p}\in\mb{Z}^{m},\, \bs{q}\in\mb{Z}^{n}\setminus\{\bs{0}\}\mbox{ s.t.}\begin{cases}
\prod_{i=1}^{m}|Y_{i}\bs{q}-p_{i}|<\psi(T) \\
\Pi_{+}(\bs{q})<T
\end{cases}
\right\}\nonumber}
\end{equation}%where $Y_{i}$ are the rows of the matrix $Y$ for $i=1,\dotsc,m$, 
and %we 
\begin{equation}\label{diophlimsup}{\textup{W}_{m,n}^{\times}(\psi):=\bigcap_{T_0\geq 1}\bigcup_{T\geq 1}\mc{S}_{m,n}^{\times}(%A,
\psi,T).}\end{equation}
Note that $\textup{W}_{m,n}^{\times}(\psi)$ is the set of matrices $Y\in \mb{R}^{m\times n}$ such that both the 
inequalities
$$
\prod_{i=1}^{m}\langle Y_{i}\bs{q}\rangle <\psi(T) \quad\mbox{and}\quad
\Pi_{+}(\bs{q})<T  
$$
have a non-trivial integer solution $\bs{q}$ for an unbounded set of parameters $T > 1$. These matrices are known in the literature as \textit{multiplicatively $\psi$-approximable}, cf.\ \cite{K23}. 
It is not hard to show (see Lemma \ref{lem:connectionwithS}) that, if one 
defines
$$\psi(x) := \frac1{xh(x)}$$
for some $h:[{1,\infty)\to[1,\infty)}$, then, for any $l\in \N$ and $\kappa > 0$
the complement of the set $\textup{Mad}\left(l,%A,
 %(x\psi)^{-1}
 {h}\right)$ is contained in the set $\textup{W}_{l,1}^{\times}(%A,
 \kappa \psi)$. Analogously, one observes that the complement of the set $\textup{Mad}^{*}\left(l,%A,
 %(x\psi)^{-1}
 {h}\right)$ is contained in $\textup{W}_{1,l}^{\times}(%A,
 \kappa \psi)$. Thus, Theorem \ref{prop:mainres} reduces to showing  that, for some appropriate function $\psi$, the complements $\textup{W}_{m,n}^{\times}(%A,
\psi)^c$ of sets of the form \eqref{diophlimsup} (and some variations thereof) have full Hausdorff dimension.}

{The set in \eqref{diophlimsup} constinutes a multiplicative analogue of the set of \textit{$\psi$-approximable} martrices. These matrices lie at the core of the Khintchine-Groshev Theorem \cite{BV10} and can be described by means of dynamics on the moduli space of lattices, through what is known as the Dani Correspondence. This crucial connection with dynamics was developed in \cite{Dani:corr}, in the special case of well/badly approximable matrices, and in \cite{KM99} in the general case.  In %this paper 
Section \ref{sec:MultDani} of this paper, we extend this correspondence  to the multiplicative set-up by considering the multi-parameter action of a certain cone in the group of diagonal matrices in $\SL_{m+n}(\R)$. Let us be more precise. Given a matrix $Y\in \mb{R}^{m\times n}$, we define $u_Y\in \SL_{m+n}(\R)$ and the corresponding lattice $\Lambda_{Y}$ in $\R^{m+n}$ by the relation
\eq{lambday}{\Lambda_{Y}:= {u_{Y}\mb Z^{m+n}:=}\begin{pmatrix}
I_{m} & Y \\
\bs{0} & I_{n}
\end{pmatrix}\mb{Z}^{m+n}.}
For any lattice $\Lambda\subset\mb{R}^{m+n}$ we define the first minimum $\delta(\Lambda)$ of $\Lambda$ as \eq{1min}{\delta(\Lambda):= \inf_{\bs{v}\in\Lambda\setminus\{\bs{0}\}}\|\bs{v}\|,}
where $\|\cdot\|$ denotes the supremum norm. Finally, for $\bs{t}\in\R^m$ and $\bs{u}\in\R^n$ %with $\sum_{i=1}^{m}t_{i}=\sum_{j=1}^{n}u_{j}$ 
we set
\eq{defatu}{a(\bs{t},\bs{u}):=\textup{diag}\left(e^{t_{1}},\dotsc,e^{t_{m}},e^{-u_{1}},\dotsc,e^{-u_{n}}\right).}
It will be our standing convention that for $\bs{t}$ and $\bs{u}$ as above we will always have   \begin{equation}\label{convention}t := \sum_{i=1}^{m}t_{i}=\sum_{j=1}^{n}u_{j}.\end{equation}
Given a {continuous}
 non-increasing function  $\psi:[0,\infty)\to(0,1]$, our correspondence is then captured by the following equivalence:
 $$Y\in\textup{W}_{m,n}^{\times}(%A,
\psi)\iff \delta\left(a(\bs{t},\bs{u})\Lambda_{Y}\right) < e^{-R(t)}
\ \mbox{for an unbounded set of vectors } (\bs{t},\bs{u})\in C_R,$$
where $C_R$ is a special cone 
inside the group of diagonal matrices in $\SL_{m+n}(\R)$ (identified with the space $\mb R^{m+n}$).
Here ${R:[0,\infty)\to \R}$ is a 
{continuous}
 %smooth 
 function 
 associated to $\psi$ in a unique way (see \cite[Lemma 8.3]{KM99} and Lemma \ref{lem:C1}). 
We wish to highlight that the main novelty in the above statement is the observation that in order to achieve a one-to-one correspondence between multiplicative approximation and dynamics one has to let the acting cone \emph{depend on the approximating function $\psi$}  (see \eqref{eq:cone} for a precise definition). This appears to be a new observation.} The reader is referred to Section \ref{sec:MultDani} and in particular to Proposition \ref{prop:C2} for a proof of the equivalence.

\subsection{Acknowledgments}
RF is very grateful to Victor Beresnevich %and Dmitry Kleinbock 
for many useful discussions, which eventually led to this paper. {Part of this work was done during DK's stay at the ETH (Z\"urich), whose hospitality is gratefully acknowledged. {We are also indebted to the anonymous referee for their suggestions, in particular, for pointing out Remark \ref{rmk:ref}}.}

\setcounter{tocdepth}{1} % Show sections
\tableofcontents

\section{Techniques and Overview of the Proof}

In this section, we briefly outline the strategy of proof and the main techniques used therein, with some emphasis on the novel ideas featuring in this paper. %We avoid specifying the details of the proof in the dual case, unless they significantly differ from those in the simultaneous case. 
Here and throughout the paper we will use the Vinogradov and Bachmann-Landau notations. Namely, we will write $x\ll_{z} y$ for $x,y,z> 0$ to indicate that there exists a constant $c>0$, depending on the parameter $z$, such that $x\leq cy$. We will also denote by $O_{z}(x)$ an unspecified quantity $y$ such that $y\ll_{z}x$. 

To prove Theorem \ref{thm:thm1}, we argue by induction on the parameter $d$. We start by assuming that $\bs{\alpha}=(\alpha_{2},\dotsc,\alpha_{d})\in\mb{R}^{d-1}$ is a vector in (\ref{eq:mainres}) with $d$ replaced by $d-1$. We fix an interval $I\subset\mb{R}$  and %we 
reduce %therefore 
the problem to proving that, {for some   $\kappa>0$}, the set of $x\in I$ such that for all $S\subset\{2,\dotsc,d\}$ the inequalities
\begin{equation}\label{simult}
  {\inf_{q\ne 0}}
|q|h_{l}(|q|)\langle qx\rangle\prod_{i\in S}\langle q\alpha_{i}\rangle >\kappa%\nonumber
\end{equation}
{and} 
\begin{equation}\label{dual}
  {{ \inf_{\bs{q}\ne\bs{0}}\Pi_+(\bs{q})h_l\left(\Pi_+(\bs{q})\right)\left\langle q_1x + \sum_{i\in S}q_i\alpha_i%\cdot\bs{\alpha}
  \right\rangle>\kappa}}%\nonumber
\end{equation}
%\comm{(am I right to add $\liminf$?)} 
hold, has full Hausdorff dimension. Here, $l:=\#S+1$. The reader may refer to Section \ref{sec:Sli} for the details. We then use the multiplicative Dani Correspondence (Section \ref{sec:MultDani}) to reinterpret 
\eqref{simult} and \eqref{dual} as a condition on the the first minimum of certain lattices.

In Section \ref{sec:dansets} we introduce "dangerous" subsets of the interval $I$. These are the sets that we ultimately wish to remove from $I$. By construction, they depend on a set of indices $S\subset\{2,\dotsc,d\}$, on an integer vector 
$\binom{b_{0}}{\bs{b}}\in\mb{Z}^{l+1}$, and on a multi-time $\bs{t}\in\beta \mb{Z}^{l}$, where $\#S=l-1$ {and $\beta$ is a suitably chosen positive number}. Roughly speaking, they represent the portion of the real line where {a fixed integer vector %${b_{0}\choose \bs{b}}\in\mb Z^{l+1}$ 
is a reason for %(\ref{eq:correspondence}) 
{\eqref{simult} and \eqref{dual}}  to fail. More precisely,} {with the notation as in \equ{lambday}--\eqref{convention} where $(m,n) = (l,1)$ and $(l,1)$ respectively},  each dangerous set has {either} the form
\begin{equation}\label{simultset}
 \left\{x\in I : \left\|a(\bs{t},t){u_{x\choose {\pi_{S}\bs{\alpha}}} {b_{0}\choose \bs{b}}}\right\|<e^{-R(t)}\right\}
\end{equation}
%where $\pi_{S}$ is {defined to be} the projection onto the components with indices in the set $S$, 
{or a similar  "dual" form
\begin{equation}\label{dualset}
 {
\left\{x\in I : \left\|a(t,\bs{t}){u_{(x,{\pi_{S}\bs{\alpha}^{\scriptscriptstyle{T}})}} {b_{0}\choose \bs{b}}}\right\|<e^{-R(t)}\right\},} 
\end{equation}
{where $R$ is a function of $t$ determined by $h$ and $\kappa$ via Lemma~\ref{lem:C1}.
See Definitions \ref{def:dualdan} and \ref{def:simdan}} for details.}
%\comm{Do I understand correctly that it is not quite true? namely that here `each dangerous set' refers to those from Definition \ref{def:simdan}, while you also work with the dual ones? We don't have to introduce those here, but maybe at least mention that other kinds were used too?} 
Our goal will be to remove all "dangerous" sets from the interval $I$ and show that the remaining points in $I$ form a full Hausdorff dimension set. This is based on ideas from \cite{Ber15}.

In Section \ref{sec:setup}, we proceed to construct a Cantor-type set contained in the complement of all dangerous intervals. To this end, we recursively subdivide the interval $I$ into smaller sub-intervals and remove those that intersect dangerous intervals defined by multi-times $\bs{t}$ that lie in a specified range. This construction is based on the work of Badziahin and Velani \cite{BV} (see also Section \ref{sec:Sli}).

To show that the Cantor-type set constructed in the previous step has full Hausdorff dimension, we must estimate how many sub-intervals are removed from $I$ at each time. We show that, under some mild assumptions, each dangerous set can be thought of as an interval. In view of this, counting the intervals that need to be removed in the construction at each time reduces to answering the following crucial question: given an interval {${J}\subset I$} and a fixed multi-time $\bs{t}$, 
\eq{countingproblem}{{\begin{aligned}\text{for how many {integer} vectors }{b_{0}\choose \bs{b}}&\\ \text{the intersection of sets \eqref{simultset} and \eqref{dualset} with }&J\text{ is nonempty?}\end{aligned}}}
\ignore{satisfy
\begin{equation}
\label{eq:Q}
\left\{x\in I : a(\bs{t},t){u_{x\choose {\pi_{S}\bs{\alpha}}} {b_{0}\choose \bs{b}}}<e^{-R(t)}\right\}\cap B\neq \varnothing?
\end{equation}}
The {objective} of Section \ref{sec:countingonav} is to answer this question.

Based on ideas of Badziahin, we show that the frequency of dangerous sets (or better intervals) at multi-time $\bs{t}$ is "on average" $1$ in $t^{l-1}$ (where $t=\sum_{i}t_{i}$). More precisely, we will prove that, if $\Delta_{\bs{t}}$ is the length of any dangerous interval at time $\bs{t}$, then there exists a number $N\geq 1$ for which any block of $N$ intervals of length $t^{l-1}\Delta_{\bs{t}}$ is intersected by at most $N$ dangerous intervals. The main task in Section \ref{sec:countingonav} will be to give a precise estimate of the number $N$. This can be reduced to a lattice-point-counting problem for the {lattices $a (\bs{t},t){\Lambda_{y\choose {\pi_{S}\bs{\alpha}}}}$  and $a(t,\bs{t}){\Lambda_{(y,{\pi_{S}\bs{\alpha}^{\scriptscriptstyle{T}})}}}$}, where $y$ is some fixed point in $I$.

The \emph{key idea} is to estimate the first minimum of the {aforementioned lattices} %$a (\bs{t},t){\Lambda_{y\choose {\pi_{S}\bs{\alpha}}}}$ 
through the inductive hypothesis. {Here we use a simple but effective idea of Widmer (see \cite{Wid}), later developed by the   {first-named} author in \cite{Fre1} and \cite{Fre3}. Let us describe it for the simultaneous case;  the dual case is treated along the same lines. Note that}  any vector $\bs{v}$ in the lattice $a (\bs{t},t){\Lambda_{y\choose {\pi_{S}\bs{\alpha}}}}$ has the {form} $a (\bs{t},t){u_{y\choose {\pi_{S}\bs{\alpha}}}{b_{0}\choose \bs{b}}}$, with ${b_{0}\choose \bs{b}}$ an integer vector.  To give an estimate from below of the length of $\bs{v}$, we use the arithmetic-geometric mean inequality. Namely, we have that
\begin{equation}
\label{eq:AM-GM}
\|\bs{v}\|=\max_{i}|v_i|\geq \left(
  {\Pi_+(\bs{v})}%\prod_{v_{i}\neq 0}|v_{i}|
\right)^{1/\#\{i:v_{i}\neq 0\}}.
\end{equation}
Since $\det a (\bs{t},t)=1$, if all the components of $\bs{v}$ are non-null, we also have that
\begin{equation}
\label{eq:AM-GM1}
%\prod_{v_{i}\neq 0}|v_{i}|
  {\Pi_+(\bs{v})}=|b_{0}||b_{1}+b_{0}y|\prod_{i=1}^{l}|b_{i}+b_{0}\alpha_{i}|.
\end{equation}
Now, if there are still points in the {interval ${J}$} to be removed, we can choose $y$ outside all dangerous intervals removed up to this point. This implies that the product at the right-hand side of (\ref{eq:AM-GM1}) is bounded below by the function $h_{l}^{-1}$, yielding the required estimate for the minimum. 

The strategy described in the previous paragraph works so long as the components of the vector $\bs{v}$ in (\ref{eq:AM-GM}) are all non-zero. When some of the components of $\bs{v}$ are null, we cannot rely on the fact that $\det a (\bs{t},t)=1$. To solve this problem, we proceed to estimate simultaneously the first minimum $\lambda_{1}$ of the lattice $a (\bs{t},t){\Lambda_{y\choose {\pi_{S}\bs{\alpha}}}}$ and the first minimum $\lambda_{1}^{*}$ of its dual lattice. Then we show that the product $\lambda_{1}\lambda_{1}^{*}$ can be bounded below and, hence, one of the two minima admits a favorable lower bound. Finally a result of Mahler \cite{Mah} will allow us to relate the minima of the dual lattice to those of the original lattice, thus completely solving the counting problem  \equ{countingproblem}.

Along with the multiplicative Dani correspondence, the use of the dual minimum to approach the lattice-point-counting stage of the proof is the main novelty of this paper. We wish to remark that some form of duality intrinsic to the present problem was already pointed out in \cite{Bad} (see Subsection 2.2). Here, however, we are able to make this duality fully explicit by exploiting the fact that the lattices involved in the proof of the dual case of Theorem \ref{thm:thm1} (from a Diophantine perspective) are precisely the dual (from a geometric perspective) of the lattices considered in the simultaneous case. This allows us to use both the inductive hypotheses at once. 

In Section \ref{sec:conclusion} the proof is concluded, and the constant $\kappa$ and {several  other parameters}
%  and the interval $I$ 
are chosen appropriately.

\section{Slicing and %Further Reductions
{Cantor-type sets}}
\label{sec:Sli}

The aim of this section is to reduce Theorem \ref{prop:mainres} to a one-dimensional statement. We start with the following easy lemma, the proof of which we leave to the reader.

\begin{lem}
\label{lem:affine}
Let $h:[0,\infty)\to \mb{R}$ be a non-decreasing sub-homogeneous function of exponent $\lambda$, i.e., such that for all $c\geq 1$ and $x\in[0,\infty)$ it holds $h(cx)\leq c^{\lambda}h(x)$. Let 
%$\mu\in\mb{R}$, $\bs{\nu}\in\mb{R}^{l}$ and let $\tau_{\mu,\bs{\nu}}:\mb{R}^{l}\to\mb{R}^{l}$ be the affine linear transformation defined by $\tau_{\mu,\bs{\nu}}(\bs{v}):=\mu\bs{v}+\bs{\nu}$. If 
$\mu\in\mb{Q}\setminus\{0\}$ and $\bs{\nu}\in\mb{Q}^{l}$; then %for any $A\subset\mb{R}^{l}$ we have that
$$%{\tau_{\mu,{\bs{\nu}}}
{\mu\cdot\textup{Mad}\left(l,%,A,
h\right)+\bs{\nu}\subset\textup{Mad}\left(l,%\tau_{\mu,{\bs{\beta}}}A,
h\right)\quad\mbox{and}\quad%\tau_{\mu,{\bs{\nu}}}
\mu\cdot\textup{Mad}^{*}\left(l,%A,
h\right)+\bs{\nu}\subset\textup{Mad}^{*}\left(l,%\tau_{\mu,{\bs{\beta}}}A,
h\right).}$$
\end{lem}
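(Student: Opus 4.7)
My plan is to handle the two inclusions in parallel: both rest on the observation that multiplying a denominator by a fixed positive integer $D$ can only distort $\|\cdot\|$, $|\cdot|$, and $\Pi_+(\cdot)$ by multiplicative constants, while the sub-homogeneity hypothesis ensures that $h$ absorbs such constants.

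For the simultaneous inclusion, I would begin by writing $\mu=a/b$ in lowest terms and $\nu_{i}=c_{i}/d_{i}$, and choosing $D\in\N$ to be a common multiple of $b,d_{1},\dots,d_{l}$ large enough that the integer $C := |Da/b|$ is at least $1$. Given $\bs{\alpha}\in \textup{Mad}(l,h)$ with $\inf_{q\ne 0}|q|h(|q|)\prod_{i}\|q\alpha_{i}\|=\kappa>0$, I would bound $\prod_{i}\|q(\mu\alpha_{i}+\nu_{i})\|$ from below for arbitrary $q\ne 0$ by passing to the auxiliary integer $q':=(Da/b)q$. The equality $Dq(\mu\alpha_{i}+\nu_{i})\equiv Dq\mu\alpha_{i}\pmod{\Z}$ (since $Dq\nu_{i}\in\Z$) combined with the trivial estimate $\|Dx\|\le D\|x\|$ yields
\[\prod_{i=1}^{l}\|q(\mu\alpha_{i}+\nu_{i})\|\geq D^{-l}\prod_{i=1}^{l}\|q'\alpha_{i}\|.\]
Applying the Mad hypothesis to $q'$, together with $|q'|=C|q|$ and the sub-homogeneity estimate $h(C|q|)\le C^{\lambda}h(|q|)$, gives
\[|q|h(|q|)\prod_{i=1}^{l}\|q(\mu\alpha_{i}+\nu_{i})\|\geq \frac{\kappa}{D^{l}C^{1+\lambda}},\]
which is a positive constant independent of $q$, proving $\mu\bs{\alpha}+\bs{\nu}\in\textup{Mad}(l,h)$.

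The dual inclusion proceeds along exactly the same lines. For $\bs{\alpha}\in\textup{Mad}^{*}(l,h)$ and $\bs{q}\in\Z^{l}\setminus\{\bs{0}\}$, I would set $\bs{q}':=(Da/b)\bs{q}\in\Z^{l}\setminus\{\bs{0}\}$. The key identity $\|D\bs{q}\cdot(\mu\bs{\alpha}+\bs{\nu})\|=\|\bs{q}'\cdot\bs{\alpha}\|$ (again using $D\bs{q}\cdot\bs{\nu}\in\Z$) and the estimate $\|Dx\|\le D\|x\|$ reduce matters to comparing $\Pi_{+}(\bs{q}')$ with $\Pi_{+}(\bs{q})$. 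Since $\bs{q}'=C\bs{q}$ with $C\ge 1$ an integer, the supports of $\bs{q}$ and $\bs{q}'$ coincide and $\Pi_{+}(\bs{q})\le \Pi_{+}(\bs{q}')\le C^{l}\Pi_{+}(\bs{q})$, so sub-homogeneity gives $h\bigl(\Pi_{+}(\bs{q}')\bigr)\leq C^{l\lambda}h\bigl(\Pi_{+}(\bs{q})\bigr)$. Combining these estimates exactly as in the simultaneous case produces a uniform positive lower bound for $\Pi_{+}(\bs{q})h\bigl(\Pi_{+}(\bs{q})\bigr)\|\bs{q}\cdot(\mu\bs{\alpha}+\bs{\nu})\|$.

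There is no genuine obstacle here; the only mild subtlety is making sure the auxiliary constant $C$ is at least $1$ so that sub-homogeneity can be invoked, which is arranged by enlarging $D$ if necessary. The entire argument is purely algebraic manipulation of the definitions, matching the authors' remark that the proof is left to the reader.
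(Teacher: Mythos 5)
Your argument is correct. Since the paper explicitly states the proof is left to the reader, there is no reference proof to compare against, but the change-of-denominator computation you give is precisely the expected one. The passage from $q$ to $q'=D\mu q$ (resp.\ $\bs{q}$ to $\bs{q}'=D\mu\bs{q}$), the use of $Dq\nu_i\in\Z$ to eliminate the translate, the bound $\|Dx\|\le D\|x\|$, the comparison $\Pi_+(\bs{q}')\le C^{l}\Pi_+(\bs{q})$, and the absorption of the resulting constant via sub-homogeneity and monotonicity of $h$ all check out. One tiny simplification: the condition ``$D$ large enough that $C=|Da/b|\ge 1$'' is automatic once $D$ is any positive common multiple of $b,d_1,\dots,d_l$, since $a\ne 0$ forces $C=|Da/b|$ to be a nonzero integer; no enlargement of $D$ is ever needed.
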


%Then the following lemma holds.
{The next lemma relates the sets $\textup{Mad}\left(l,%A,
 %(x\psi)^{-1}
 {h}\right) $ and $\textup{Mad}^{*}\left(l,%A,
 %(x\psi)^{-1}
 {h}\right) $ to the sets \eqref{diophlimsup}  of multiplicatively $\psi$-approximable matrices for an appropriately chosen function $\psi$.}

\begin{lem}
\label{lem:connectionwithS}
{Let $\psi:[0,\infty)\to(0,1]$ be a continuous non-increasing function, and let $h:[0,\infty)\to\R$ be defined by $h(x):= \frac1{x\psi(x)}$}.
Then, % and any $A\subset \mb{R}^{l\times 1}$ and $A'\subset \mb{R}^{1\times l}$ 
we have that
$$%A\setminus
 {\textup{Mad}\left(l,%A,
 %(x\psi)^{-1}
 {h}\right)=\bigcup_{\kappa>0} \textup{W}_{l,1}^{\times}(%A,
 \kappa \psi)^c\quad\mbox{and}\quad \textup{Mad}^{*}\left(l,%A',
 %(x\psi)^{-1}
  {h}\right)=\bigcup_{\kappa>0} 
 \textup{W}_{1,l}^{\times}\left(%A',
 \kappa\psi\right)^c.}$$
\end{lem}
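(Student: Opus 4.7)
The two equalities are perfectly symmetric — one replaces the triple $(q,|q|,\prod_i\|q\alpha_i\|)$ by $(\bs q,\Pi_+(\bs q),\|\bs q\cdot\bs\alpha\|)$ — so I would write the argument in detail only for the simultaneous case and then indicate that the dual case is identical. My first move is to unpack the definition of $\mc{S}_{l,1}^\times(\psi,T)$: choosing $\bs p\in\mb Z^l$ coordinatewise as the nearest integer to each $q\alpha_i$, the defining existential condition becomes
$$\bs\alpha\in\mc{S}_{l,1}^\times(\psi,T)\iff\exists\,q\in\mb Z\setminus\{0\}\text{ with }|q|<T\text{ and }\prod_{i=1}^l\|q\alpha_i\|<\psi(T).$$
In particular, $\bs\alpha\notin\mc{S}_{l,1}^\times(\kappa\psi)$ means there is some $T_0\geq 1$ such that, for all $T\geq T_0$ and every $q\in\mb Z$ with $0<|q|<T$, $\prod_i\|q\alpha_i\|\geq\kappa\psi(T)$.

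The forward inclusion $\textup{Mad}(l,h)\subseteq\bigcup_{\kappa>0}\mc{S}_{l,1}^\times(\kappa\psi)^c$ is immediate. Assume $c:=\inf_{q\ne0}|q|h(|q|)\prod_i\|q\alpha_i\|>0$; rewriting this as $\prod_i\|q\alpha_i\|\geq c\,\psi(|q|)$ for every non-zero $q$ and using that $\psi$ is non-increasing, for every $T\geq1$ and every $q$ with $0<|q|<T$ I obtain $\prod_i\|q\alpha_i\|\geq c\,\psi(|q|)\geq c\,\psi(T)>(c/2)\psi(T)$. Setting $\kappa:=c/2$ shows $\bs\alpha\notin\mc{S}_{l,1}^\times(\kappa\psi,T)$ for every $T\geq1$, hence $\bs\alpha\in\mc{S}_{l,1}^\times(\kappa\psi)^c$.

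For the reverse inclusion I assume $\bs\alpha\notin\mc{S}_{l,1}^\times(\kappa\psi)$ with $T_0$ as above. For any $q\in\mb Z$ with $|q|\geq T_0$ the inequality $\prod_i\|q\alpha_i\|\geq\kappa\psi(T)$ holds for every $T>|q|$; letting $T\searrow|q|$ and invoking right-continuity of $\psi$ yields $\prod_i\|q\alpha_i\|\geq\kappa\psi(|q|)$, i.e.\ $|q|h(|q|)\prod_i\|q\alpha_i\|\geq\kappa$. Only the finitely many integers $q$ with $0<|q|<T_0$ remain, and for them it suffices to show $\prod_i\|q\alpha_i\|>0$. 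I read this positivity off the hypothesis itself: if some coordinate were rational, say $\alpha_i=p/r$, then $\|r\alpha_i\|=0$ would force $\prod_j\|r\alpha_j\|=0<\kappa\psi(T)$ for every $T>|r|$ (here I use $\psi>0$), contradicting $\bs\alpha\notin\mc{S}_{l,1}^\times(\kappa\psi)$. Consequently $\prod_i\|q\alpha_i\|$ is strictly positive on the finite set $\{0<|q|<T_0\}$, so the infimum defining $\textup{Mad}(l,h)$ is positive.

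I do not anticipate a real obstacle: the only places where the hypotheses on $\psi$ (continuity and strict positivity) are genuinely used are the limit $T\searrow|q|$ and the exclusion of rational coordinates. The dual statement proceeds along identical lines after replacing $q\in\mb Z\setminus\{0\}$ by $\bs q\in\mb Z^l\setminus\{\bs 0\}$, $|q|$ by $\Pi_+(\bs q)$, and $\prod_i\|q\alpha_i\|$ by $\|\bs q\cdot\bs\alpha\|$; the analogous "rational" obstruction to be ruled out is the existence of some non-zero $\bs q\in\mb Z^l$ with $\bs q\cdot\bs\alpha\in\mb Z$, and the same argument disposes of it.
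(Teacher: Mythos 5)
Your proof is correct and follows essentially the same strategy as the paper: you establish $\mc{S}_{l,1}^{\times}(\kappa\psi)^c \subseteq \textup{Mad}(l,h)$ by passing from $\psi(T)$ to $\psi(|q|)$ via continuity of $\psi$ for large $|q|$ and disposing of the finitely many small $|q|$ via irrationality, and the containment $\textup{Mad}(l,h)\subseteq\bigcup_{\kappa>0}\mc{S}_{l,1}^{\times}(\kappa\psi)^c$ by monotonicity of $\psi$. The only cosmetic difference is that the paper proves the latter inclusion by contraposition, sending $\kappa_n\to 0$ and extracting solutions $q_n\to\infty$, whereas your direct choice $\kappa=c/2$ is a bit cleaner.
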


%{Here and hereafter the exponent $c$ denotes  the complement of a set,   {and $x\psi$ stands for the function $x\mapsto x\psi(x)$}.}

\begin{proof}
{Let us start by proving that $\textup{W}_{l,1}^{\times}(%A,
 \kappa \psi)^c\subset\textup{Mad}\left(l,%A,
 %(x\psi)^{-1}
 {h}\right)$ for any given $\kappa>0$. Take $\bs{\alpha}\in \mb{R}^{l\times 1}$ and suppose that the inequality
$$\prod_{i=1}^{l}|\alpha_{i}q-p_{i}|<\frac\kappa2  \psi({|q|})$$
has a solution $(\bs{p},q)\in\mb{Z}^{l+1}$ with $q\neq 0$. By changing the signs of all the components $p_i$ of $\bs{p}$ one can assume that $q > 0$.
Take  $T > q$ such that $\psi(T) \ge \psi(q)/2$. Then the inequalities $\prod_{i=1}^{l}|\alpha_{i}q-p_{i}|<\kappa\psi(T)$ and $q<T$ are satisfied.
%, hence
This argument shows that for any $\bs{\alpha}\notin\textup{W}_{l,1}^{\times}(%A,
\kappa \psi)$
there exists $M \in\N$ such that
$$q\prod_{i=1}^{l}\langle \alpha_{i}q\rangle\geq \frac\kappa2 q\psi({q})$$
for all $q \in\N$ with $q > M$. In particular this forces the product $\prod_{i=1}^{l}\langle \alpha_{i}q\rangle$ to be nonzero for all $q \in\N$, and thus for $q = 1,\dots,M$ one can write  $$q\prod_{i=1}^{l}\langle \alpha_{i}q\rangle\geq \frac{\inf_{q = 1,\dots,M}\prod_{i=1}^{l}\langle \alpha_{i}q\rangle}{\psi(1)} q\psi({q}).$$
Hence
${{\bs{\alpha}}\in \textup{Mad}\left(l,%A,
%(x\psi)^{-1}
 {h}\right).}$}
{For the other inclusion, {which is not needed for the proof of our results, %let $\bs \alpha\in \textup{Mad}\left(l,h\right)$ and 
assume %by contradiction 
that}
$$\bs\alpha\in\left(\bigcup_{\kappa > 0}{\textup{W}_{l,1}^{\times}(%A,
 \kappa \psi)^c}\right)^c =\bigcap_{\kappa > 0}{\textup{W}_{l,1}^{\times}(%A,
 \kappa \psi)}.$$
 Then, for any $\kappa>0$ the system
\begin{equation}
\label{eq:nnnnew}
\begin{cases}
\prod_{i=1}^{l}\langle \alpha_{i}{q}\rangle <\kappa\cdot \psi(T)\leq \kappa\cdot \psi(q) \\
|q|<T
\end{cases}   
\end{equation}
has a solution $q\neq 0$ for at least one (in fact, infinitely many) $T\in\mb N$. Let $\kappa_n$ be a sequence of values of $\kappa$ tending to $0$, and let $T_n$ be one fixed corresponding value of $T$ for which the system (\ref{eq:nnnnew}) has a solution $q_n$. Then, {we must have} $q_n\to\infty$. As above, we may assume that $q_n>0$. Then, it follows from (\ref{eq:nnnnew}) that
$$\lim_{n\to\infty}|q_n|\prod_{i=1}^{l}\langle \alpha_{i}q\rangle<\kappa_n\cdot q_n\psi(q_n)=\frac{\kappa_n}{h(q_n)},$$
whence {$\bs \alpha\notin \textup{Mad}\left(l,h\right)$}. % -- a contradiction.
}
The %case $A'\subset\mb{R}^{l\times 1}$ 
second equality %inclusion 
is %analogous.
proved similarly.
\end{proof}

In view of Lemmas \ref{lem:affine} and \ref{lem:connectionwithS}, {Theorem \ref{prop:mainres}  can be reduced} to %proving 
the subsequent proposition.

\begin{prop}
\label{prop:reduction1}
Let \eq{psil}{  {\psi_{l}(x):=x^{-1}h_{l}(x)^{-1} = \frac1{x(\log^{+}x)^{l-1}\log^{+}\log^{+}x}}} for $l=1,\dotsc,d$. Then, there exist a box $B\subset\mb{R}^{d}$ and a constant $\kappa=\kappa (d,B)>0$ such that the set
$${\bigcap_{l=1}^{d}\bigcap_{\#S=l}\pi_{S}^{-1}\left(\textup{W}_{l,1}^{\times}(%\pi_{S}(B),
\kappa \psi_{l})^{c}\cap\textup{W}_{1,l}^{\times}(%\pi_{S}(B),
\kappa \psi_{l})^{c}\right) \cap B}$$
has full Hausdorff dimension. %The exponent $c$ denotes here the complement in the set $\pi_{S}(B)$.
\end{prop}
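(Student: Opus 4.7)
The plan is to prove Proposition \ref{prop:reduction1} by induction on $d$, using the multiplicative Dani correspondence of Section \ref{sec:MultDani} to convert membership in $\mc{S}_{l,1}^{\times}(\kappa\psi_{l})^{c}\cap\mc{S}_{1,l}^{\times}(\kappa\psi_{l})^{c}$ into lower bounds for the first minima of an explicit family of lattices parametrised by $(\bs{t},t)\in C_R$. The whole problem then reduces to producing, inside a suitable box $B\subset\R^{d}$, a full Hausdorff dimension set of $\bs{\alpha}$ whose associated lattices satisfy these lower bounds uniformly in $(\bs{t},t)$ and in the index sets $S$.

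For the base case $d=1$ the statement amounts to finding a full Hausdorff dimension set of $\alpha\in B$ with $\inf_{q\ne 0}|q|\log^{+}\log^{+}|q|\cdot\|q\alpha\|>\kappa$, which is weaker than the classical badly approximable condition and hence immediate from Schmidt's theorem. For the inductive step, assume the proposition for $d-1$ with some box $B'\subset\R^{d-1}$ and constant $\kappa'$, and fix $\bs{\alpha}'=(\alpha_{2},\dotsc,\alpha_{d})$ in the full Hausdorff dimension subset of $B'$ supplied by the induction. By a Fubini-Marstrand type slicing argument, it suffices to take $B=I\times B'$ for a suitable interval $I\subset\R$ and to produce a full Hausdorff dimension set of $x\in I$ such that $(x,\bs{\alpha}')$ additionally satisfies \eqref{simult} and \eqref{dual} for every $S\subset\{1,\dotsc,d\}$ containing $1$; Lemma \ref{lem:affine} provides the freedom to rescale and translate $I$ rationally.

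Fix such an $S$ with $\#S=l$. The Dani correspondence rewrites the forbidden behaviour of $x$ as the existence of an integer vector ${b_{0}\choose\bs{b}}$ and a multi-time $\bs{t}\in\beta\Z^{l}$ in the discretized cone $C_R$ such that the associated lattice vector has supremum norm below $e^{-R(t)}$; the set of bad $x$ is the union of the dangerous intervals \eqref{simultset} and \eqref{dualset}. Following the scheme of Badziahin-Velani \cite{BV}, I would build a nested sequence of unions of sub-intervals of $I$ by recursively subdividing and removing every sub-interval that meets a dangerous interval whose multi-time $\bs{t}$ lies in a prescribed window, and show that the resulting Cantor-type intersection has full Hausdorff dimension via a standard mass-distribution argument.

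The main obstacle is the counting question \eqref{eq:countingproblem}: for fixed $\bs{t}$ and a sub-interval $J\subset I$ at the appropriate scale, one must bound the number of integer vectors producing dangerous intervals that meet $J$. To handle this, I would pick a sample point $y\in J$ not yet removed and apply the arithmetic-geometric mean inequality \eqref{eq:AM-GM} to the lattice vector $a(\bs{t},t)u_{(y,\pi_{S\setminus\{1\}}\bs{\alpha}')}{b_{0}\choose\bs{b}}$. When all its coordinates are nonzero, $\det a(\bs{t},t)=1$ reduces the lower bound to a lower bound on the product in \eqref{eq:AM-GM1}, which is at least a constant times $1/h_{l}(|b_{0}|)$ because $\bs{\alpha}'$ satisfies the inductive hypothesis and $y$ avoids all previously removed dangerous intervals. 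The genuinely subtle case is when some coordinates of the lattice vector vanish; there I would pass to the dual lattice, bound its first minimum by applying the inductive hypothesis in transposed form, and transfer the estimate back via Mahler's inequality $\lambda_{1}\lambda_{1}^{*}\gg 1$. Summing the resulting counts over $\bs{t}\in C_R\cap\beta\Z^{l}$ in each construction window and choosing $\kappa$, the stretching parameter $\beta$, and the sub-interval scales so that the removed proportion is strictly less than $1$ at every stage completes the argument, as detailed in Section \ref{sec:conclusion}.
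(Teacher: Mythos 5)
Your overall strategy — induction on $d$, Marstrand slicing, the multiplicative Dani correspondence, a Badziahin--Velani Cantor construction, and a counting argument via the AM--GM inequality combined with duality — is essentially the one taken by the paper. However, the treatment of the base case is wrong. You assert that for $d=1$ the statement follows "immediately from Schmidt's theorem" because the condition $\inf_{q\ne0}|q|\log^{+}\log^{+}|q|\cdot\|q\alpha\|>\kappa$ is weaker than bad approximability. But the proposition requires a \emph{single} $\kappa>0$ for which the set has full Hausdorff dimension. The set of $\alpha$ with $\inf_{q\ne0}|q|\|q\alpha\|\ge\kappa$ does \emph{not} have full dimension for any fixed $\kappa$; Schmidt/Jarník give full dimension only for the union over $\kappa>0$, and a fixed-$\kappa$ sublevel set of badly approximable numbers is a proper (lower-dimensional) subset of what you need. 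The extra factor $\log^{+}\log^{+}|q|$ is precisely what makes the fixed-$\kappa$ set large enough to be full-dimensional, and establishing that already requires the Cantor construction — the paper in fact runs the same machinery for $d=1$, only with no inductive input (see the remarks after Proposition \ref{prop:reduction1} and at the start of Section \ref{sec:setup}).

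Two further points are glossed over. First, one cannot simply intersect several full-dimensional fibre sets: the paper proves each set indexed by $(l,S)$, and separately the dual and simultaneous versions, is $r_k$-Cantor-rich \emph{with the same initial interval collection}, and then intersects using Theorem \ref{cor:CRFullHaus}; "full dimension plus a mass-distribution argument" would not survive the finite intersection. Second, your sketch misses the inner induction on $l$: in Lemmas \ref{lem:alblocksdual}--\ref{lem:alblockssim2} the sample point $y$ must avoid dangerous intervals for \emph{all} sub-index sets $S'$ with $\#S'<l$ (both dual and simultaneous), not merely those previously removed for the current $S$; this nesting is what justifies invoking the lower bounds of Lemmas \ref{lem:dualinductivehyp} and \ref{lem:siminductivehyp} when some coordinates of the lattice vector vanish. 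Finally, $\lambda_1\lambda_1^*\gg1$ is not Mahler's inequality (Mahler relates $\delta_i$ to $\delta_{d+1-i}^*$, and in fact $\delta_1\delta_1^*\ll 1$ in general); the paper has to work through the trichotomy $(i)/(ii)/(iii)$ in Lemma \ref{lem:alblocksdual} to extract a usable lower bound, and your one-line appeal to Mahler does not substitute for that case analysis.
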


We will prove Proposition \ref{prop:reduction1} by induction on $d$. For $d=1$, the proof is analogous to that for $d>1$, but no inductive hypothesis is required. More on this can be found at the beginning of Section \ref{sec:setup}. {Now, given $d\ge 2$}, by the inductive hypothesis and Lemma \ref{lem:affine} we can find a full Hausdorff dimension set of vectors $(\alpha_{2},\dotsc,\alpha_{d})\in[0,1]^{d-1}$ such that for 
%all sets of indices $\varnothing\neq S\subset\{2,\dotsc,d\}$ with $\#S=l-1$ and 
some fixed constant ${\gamma}>0$, both the following conditions hold:
\begin{equation}
\label{eq:multinductivesim}\begin{aligned}
{|q|}\prod_{i\in S}\langle q\alpha_{i}\rangle \geq {\gamma}h_{l-1}({|q|})^{-1}\\  {\text{for all nonempty }  S\subset\{2,\dotsc,d\}\text{ with }\#S=l-1}&{\text{ and all }}q\in\mb{Z}\setminus\{0\},\end{aligned}
\end{equation}
 and
\begin{equation}
\label{eq:multinductivedual}
\begin{aligned}
  {\Pi_+(\bs{q})}\left\langle\bs{q}\cdot \pi_{S}(\bs{\alpha})\right\rangle\geq {\gamma}h_{l-1}\left(
%q_{1}^{+}\dotsm q_{l-1}^{+}
  {\Pi_+(\bs{q})}\right)^{-1}\\\  {\text{for all nonempty }  S\subset\{2,\dotsc,d\}\text{ with }\#S=l-1\text{ and all }}&\bs{q}\in\mb{Z}^{l-1}\setminus\{\bs{0}\}.\end{aligned}
\end{equation}
 %\comm{I replaced $c$ with $\gamma$ because otherwise the exponent $c$ in Proposition  \ref{prop:fibrestatement} looks confusing.}

To carry out the inductive step, we will use the following well-known "slicing" lemma (see \cite[Corollary 7.12]{Fal}).

\begin{lem}[Marstrand Slicing Lemma]
\label{lem:Marstrand}
Let $d>1$, let $A\subset \mb{R}^{d}$, and let $U\subset\mb{R}^{d-1}$. If for all $\bs{u}\in U$
$$\dim\{t\in\mb{R}:(t,\bs{u})\in A\}\geq s>0,\vspace{2mm}$$
then $\dim A\geq \dim U+s$, where $\dim$ denotes the Hausdorff dimension.
\end{lem}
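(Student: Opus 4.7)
\textbf{Proof plan for Lemma \ref{lem:Marstrand}.} The strategy is the classical potential-theoretic one: for arbitrary $\sigma<\dim U$ and $s'<s$, I would build a finite nontrivial Borel measure $\mu$ supported on $A$ whose mass on balls of radius $r$ is bounded by a constant times $r^{\sigma+s'}$, and then apply the mass distribution principle. Sending $\sigma\uparrow\dim U$ and $s'\uparrow s$ yields $\dim A\geq \dim U+s$.

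First, I would invoke Frostman's lemma on $U$ (after a standard reduction to a compact subset on which the conclusion of Frostman's lemma applies with a uniform constant) to obtain a compactly supported nontrivial Borel probability measure $\nu$ on $U$ satisfying $\nu(B(\bs{u},r))\leq C_1 r^{\sigma}$ for every $\bs{u}\in\R^{d-1}$ and $r>0$. Then, for each $\bs{u}\in U$, applying Frostman's lemma to the fiber $A_{\bs{u}}:=\{t\in\R:(t,\bs{u})\in A\}$, which has Hausdorff dimension $\geq s$ by hypothesis, yields a Borel probability measure $\mu_{\bs{u}}$ on $A_{\bs{u}}$ with $\mu_{\bs{u}}(B(t,r))\leq C(\bs{u})\,r^{s'}$ for all $t,r$.

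The main obstacle is to arrange the fiber measures to depend measurably on $\bs{u}$ and to have $C(\bs{u})$ uniformly bounded, so that the following product-type construction is well-defined. I would handle this by a standard exhaustion argument: the sets $\{\bs{u}\in U:C(\bs{u})\leq k\}$ can be taken to be Borel via a measurable selection theorem of Kuratowski--Ryll-Nardzewski type applied to the multifunction $\bs{u}\mapsto \{\mu_{\bs{u}}\}$, and since a countable union of sets of dimension $\geq\dim U+s'$ still has dimension $\geq\dim U+s'$, it suffices to work on one such piece where $C(\bs{u})\leq C_2$ uniformly. Alternatively, one may restrict once and for all to a compact subset of $U$ on which the fiber-dimension hypothesis holds uniformly, following the construction in \cite[Ch.~7]{Fal}.

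Granting uniformity, I would define the measure on $\R^d$ by the Fubini-type formula
$$\int f\,d\mu \;:=\; \int_U\!\left(\int_\R f(t,\bs{u})\,d\mu_{\bs{u}}(t)\right)\,d\nu(\bs{u}),$$
so that $\mu$ is a nonzero finite Borel measure supported on $A$. For any ball $B(x,r)\subset\R^d$ with $x=(t_0,\bs{u}_0)$, its projection to $\R^{d-1}$ is contained in $B(\bs{u}_0,r)$, and its intersection with each vertical fiber is an interval of length at most $2r$ centred at $t_0$; hence
$$\mu\big(B(x,r)\big)\;\leq\;\int_{B(\bs{u}_0,r)}\!\mu_{\bs{u}}\big(B(t_0,r)\big)\,d\nu(\bs{u})\;\leq\;C_1 C_2\,r^{\sigma+s'}.$$
The mass distribution principle then gives $\dim A\geq \sigma+s'$, and letting $\sigma\uparrow\dim U$, $s'\uparrow s$ completes the plan. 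The only delicate step is the measurable selection / uniformisation of the fiber Frostman measures; everything else is routine.
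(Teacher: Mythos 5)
The paper does not prove this lemma; it cites it from \cite[Corollary 7.12]{Fal}, so there is no in-paper argument to compare against. Your product-measure sketch is a reasonable strategy and the final Fubini computation is correct, but the measurable selection of fiber Frostman measures that you flag is a genuine gap, not a technicality to be dispatched by invoking Kuratowski--Ryll-Nardzewski. Frostman's lemma supplies, for each $\bs u$, a whole family of admissible measures on $A_{\bs u}$, and to integrate them you must choose one measurably in $\bs u$; this requires regularity of the multifunction $\bs u\mapsto A_{\bs u}$ (e.g.\ $A$ Borel or analytic) that the statement of the lemma does not assume. Moreover your exhaustion by $\{\bs u:C(\bs u)\le k\}$ presupposes that $C(\cdot)$ is already a well-defined Borel function, which is exactly what is at issue; and ``restrict to a compact subset of $U$'' does not by itself force the fiber constants to be uniform.

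The proof behind the cited reference applies Frostman's lemma \emph{once}, to the base $U$ only, and replaces the fiber measures by the Hausdorff \emph{content} of the fibers, which requires no selection at all. Fix $s'<s$ and $\sigma<\dim U$. Since $\dim A_{\bs u}\ge s>s'$, the content $\mathcal H^{s'}_\infty(A_{\bs u})$ is positive for every $\bs u\in U$; writing $U=\bigcup_k\{\bs u:\mathcal H^{s'}_\infty(A_{\bs u})>1/k\}$ and using countable stability of Hausdorff dimension, one may pass to a piece $U'\subset U$ with $\dim U'>\sigma$ on which $\mathcal H^{s'}_\infty(A_{\bs u})\ge c>0$ uniformly, and take a Frostman measure $\nu$ supported on $U'$ with $\nu\big(B(\bs u,r)\big)\ll r^{\sigma}$. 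Given any cover $\{V_i\}$ of $A$ by sets of diameter $|V_i|$, the slices of the $V_i$ cover each $A_{\bs u}$, so $\sum_{i\,:\,\bs u\in\pi(V_i)}|V_i|^{s'}\ge c$ for every $\bs u\in U'$, where $\pi$ is the projection onto the $\bs u$-coordinates. Integrating against $\nu$ and using $\nu(\pi(V_i))\ll|V_i|^{\sigma}$ gives $\sum_i|V_i|^{s'+\sigma}\gg c\,\nu(U')>0$, hence $\dim A\ge s'+\sigma$; let $s'\uparrow s$, $\sigma\uparrow\dim U$. This buys exactly the uniformity your approach was struggling to secure, without ever choosing a measure on a fiber.
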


{For fixed $(\alpha_{2},\dotsc,\alpha_{d})\in[0,1]^{d-1}$,} let $\bs{f}:\mb{R}\to\mb{R}^{d}$ be the function \eq{deff}{\bs{f}(x):=(x,\alpha_{2},\dotsc,\alpha_{d}).}
% and let $F(I):=I\times\{(\alpha_{2},\dotsc,\alpha_{d})\}$ for any interval $I\subset\mb{R}$. 
%\comm{$F$ seems to be not needed here.}
In view of Lemma \ref{lem:Marstrand}, {the inductive step in the proof of} Proposition \ref{prop:reduction1} can %be deduced from the following %\comm{(It is strange that $\bs{f}$ is not used until much later.)}
{proceed as follows.} One fixes {$(\alpha_{2},\dotsc,\alpha_{d})\in[0,1]^{d-1}$ {such that  \eqref{eq:multinductivesim} and \eqref{eq:multinductivedual} hold 
%for all   non-empty $S\subset\{2,\dotsc,d\}$ 
with some ${\gamma}>0$}, and aims to find an interval $I\subset\mb{R}$ and a constant $\kappa>0$ such that the set
\begin{equation*}
{\bigcap_{l=2}^{d}\ \bigcap_{S\subset\{2,\dotsc,d\},\,\#S=l-1}
\left\{x\in I : \pi_{\{1\}\cup S}\bs{f}(x)\in \textup{W}_{l,1}^{\times}(\kappa \psi_{l})^{c}\cap \textup{W}_{1,l}^{\times}(\kappa \psi_{l})^{c}\right\},}
\end{equation*}
 has full Hausdorff dimension, {where $\bs{f}$ is as in \equ{deff}}.

{In order to carry out the plan described above, let us recall the definition and properties of Cantor-type sets introduced in \cite{BV}. %and we gather some important results concerning these sets. 
The notation below is borrowed from \cite[\S5]{Ber15}.}
%(see Section 5).

{Given a collection $\mc{I}$ of compact intervals in $\R$ and $r\in\N$, let $\frac1
r \mc{I}$
denote the collection of intervals obtained by dividing each interval in $\mc{I}$ into $r$ equal
closed subintervals.} Let ${\{r_{k}\}}$ be a sequence of positive %real numbers $\geq 1$
{natural numbers}. We call a sequence ${\{\mc{I}_{k}\}}$ of interval collections in $\mb{R}$ an $r_{k}$-sequence if $\mc{I}_{k+1}\subset r_{k}^{-1}\mc{I}_{k}$ for all $k=0,1,\dotsc$. We define
$$\hat{\mc{I}}_{k}:=\frac{1}{r_{k-1}}\mc{I}_{k-1}\setminus\mc{I}_{k}$$
and the \textit{Cantor-type set} associated to $\mc{I}_{k}$ as
$$\mc{K}(\mc{I}_{k}):=\bigcap_{n\geq 0}\bigcup_{I\in\mc{I}_{k}}I.$$
Any set constructed through this procedure is called an $r_{k}$-Cantor-type set.

For an interval $J\subset\mb{R}$ and a collection of intervals $\mc{I}'$ in $\mb{R}$ we set
$$\mc{I}'\sqcap J:=\{I\in\mc{I}':I\subset J\}.$$
%We 
We define the $k$-th local characteristic of the sequence $\mc{I}_{k}$ as
\begin{equation}
\label{eq:localchar}
{\Delta}_{k}:=\min_{\left\{\hat{\mc{I}}_{k,p}\right\}}\sum_{p=0}^{n-1}\left(\prod_{i=p}^{n-1}\frac{4}{r_{i}}\right)\max_{I_{p}\in\mc{I}_{p}}\#\hat{\mc{I}}_{k,p}\sqcap I_{p},
\end{equation}
where $\left\{\hat{\mc{I}}_{k,p}\right\}$ varies through the partitions of the collection $\hat{\mc{I}}_{k}$ into $k$ subsets ($p=0,\dotsc,k-1$). Moreover, we define the global characteristic of the sequence $\{\mc{I}_{k}\}$ as
$${\Delta}:=\sup_{k\geq 0}{\Delta}_{k}.$$
Then, we have the following.

\begin{definition}
\label{def:Cantorrich}
A set $A\subset\mb{R}$ is said to be $r_{k}$-Cantor-rich if for any $\varepsilon>0$ there exists an $r_{k}$-Cantor-type set $\mc{K}(\mc{I}_{k})\subset A$ such that $\mc{I}_{k}$ has global characteristic ${\Delta}<\varepsilon$. 
\end{definition}
{The importance of  Cantor-rich sets is due to their nice intersection properties: according to \cite[Theorem 5]{BV}, the intersection of any finite number of $r_{k}$-Cantor-rich sets with same initial interval collection $\mc{I}_{0}$ is $r_{k}$-Cantor-rich. Furthermore, if  $\mc{K}(\mc{I}_{k})\subset\mb{R}$ is an $r_{k}$-Cantor-type set such that the global characteristic of $\mc{I}_{k}$ is less or equal to $1$, then
$$\dim\mc{K}(\mc{I}_{k})\geq\liminf_{k\to \infty}1-\frac{\log 2}{\log r_{k}},$$
see \cite[Theorem 4]{BV}.}
The two results stated above imply the following fact:
 
\begin{theorem}
\label{cor:CRFullHaus}
Let $r_{k}$ be a sequence of %positive 
{natural} numbers tending to $\infty$. Then the intersection of a finite number of $r_{k}$-Cantor-rich sets with same initial interval collection $\mc{I}_{0}$ has full Hausdorff dimension. 
\end{theorem}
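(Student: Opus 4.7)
The plan is to chain together directly the two previously cited results from Badziahin--Velani, since the statement essentially collects their consequences. The key observation is that in $\R$, ``full Hausdorff dimension'' means dimension equal to $1$, and each of the two BV results contributes exactly one of the two ingredients we need; accordingly I do not expect any real technical obstacle.

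First I would apply the intersection property \cite[Theorem 5]{BV} recalled immediately before the statement: given $r_k$-Cantor-rich sets $A_1,\dots,A_N$ sharing the initial interval collection $\mc{I}_0$, the finite intersection $A := \bigcap_{i=1}^{N} A_i$ is itself $r_k$-Cantor-rich. It is precisely at this step that the common initial collection $\mc{I}_0$ is used, since the construction of a Cantor-type subset of the intersection amounts to a diagonal-type selection of nested collections starting from $\mc{I}_0$.

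Second, I would unpack the definition of Cantor-richness applied to $A$ with some $\varepsilon \in (0,1]$: there exists an $r_k$-Cantor-type set $\mc{K}(\mc{I}_k) \subseteq A$ whose sequence $\{\mc{I}_k\}$ has global characteristic $\Delta < \varepsilon \leq 1$. This puts the sequence in the regime to which \cite[Theorem 4]{BV} applies, yielding the dimension bound
$$\dim \mc{K}(\mc{I}_k) \ \geq \ \liminf_{k\to\infty}\left(1 - \frac{\log 2}{\log r_k}\right).$$
Since $r_k \to \infty$ by hypothesis, $\log r_k \to \infty$ and thus $\log 2/\log r_k \to 0$, so the right-hand side equals $1$. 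Hence $\dim A \geq \dim \mc{K}(\mc{I}_k) \geq 1$, and as $A \subseteq \R$ we conclude $\dim A = 1$, i.e., $A$ has full Hausdorff dimension.

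The only step that requires any care is verifying the hypotheses of the two BV theorems in our setting---in particular checking that the extracted Cantor-type subset of the intersection genuinely satisfies the nesting and characteristic bound needed for Theorem~4---but this is already encoded in the definitions of an $r_k$-sequence and of Cantor-richness, so the argument is essentially a two-line concatenation with the trivial input that $\log 2 / \log r_k \to 0$.
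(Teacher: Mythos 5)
Your proof is correct and follows exactly the approach the paper intends: the authors state the result as a direct consequence of \cite[Theorem 5]{BV} (closure of Cantor-richness under finite intersection with common $\mc{I}_0$) and \cite[Theorem 4]{BV} (the dimension lower bound $\liminf_k(1-\log 2/\log r_k)$ when the global characteristic is at most $1$), together with the observation that $r_k\to\infty$ makes this lower bound equal to $1$. Your chaining of the two lemmas, including extracting a Cantor-type subset with characteristic $<\varepsilon\le1$ via the definition of Cantor-richness, is precisely what the paper's phrase ``the two results stated above imply the following fact'' is pointing at.
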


{In view of %this theorem and
 the above discussion, the following statement will suffice for the inductive step and thus will imply Proposition \ref{prop:reduction1}:
\begin{prop}
\label{prop:fibrestatement} Let $d\ge 2$, %$1\le l \le d$ and $S\subset\{2,\dotsc,d\}$ with $\#S=l-1$ be given.
take {$(\alpha_{2},\dotsc,\alpha_{d})\in[0,1]^{d-1}$, and suppose that  ${\gamma}>0$ is} a constant for which \eqref{eq:multinductivesim} and \eqref{eq:multinductivedual} hold. Let  $\bs{f}$ be as in \equ{deff}. 
%\comm{Here is what I don't understand:  do we assume \eqref{eq:multinductivesim} and \eqref{eq:multinductivedual} for this particular $S$? but then what does it mean if $l=1$ and $S = \varnothing$? Then we should have defined $\pi_\varnothing$ to make sense of \eqref{eq:multinductivedual}. I think this requires a separate discussion.} 
Then,  there exist an interval $I=I({\gamma})\subset\mb{R}$, a constant $0<\kappa=\kappa({\gamma},I)<1$, and a sequence of 
%real 
{natural} numbers $r_{k}%\geq 1
$ with $r_{k}\to \infty $ such that  for any $%\varnothing\neq 
S \subset\{2,\dotsc,d\}$ with $\#S=l-1$ ($1\leq l\leq d$),
the set
\begin{equation}\label{mainset}
%\bigcap_{l=1}^{d}\ \bigcap_{S\subset\{2,\dotsc,d\},\,\#S=l-1}
\left\{x\in I : \pi_{\{1\}\cup S}\bs{f}(x)\in \textup{W}_{l,1}^{\times}(\kappa \psi_{l})^{c}\cap \textup{W}_{1,l}^{\times}(\kappa \psi_{l})^{c}\right\},
\end{equation}
is  $r_{k}$-Cantor-rich.
%has full Hausdorff dimension, {where $\bs{f}$ is as in \equ{deff}}. %Here, $S\subset\{2,\dotsc,d\}$ is a subset of indices and the exponent $c$ denotes the complement in the set $\pi_{\{1\}\cup S}(F(I))$. 
\end{prop}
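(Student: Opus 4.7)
The plan is to construct, for each fixed $S\subset\{2,\dotsc,d\}$ with $\#S=l-1$, an $r_k$-Cantor-type subset of the set \eqref{mainset} whose global characteristic $\Delta$ can be made arbitrarily small. First, I apply the multiplicative Dani correspondence of Proposition~\ref{prop:C2} to rephrase the condition $\pi_{\{1\}\cup S}\bs f(x)\in\mc{S}_{l,1}^{\times}(\kappa\psi_l)^c$ as the requirement that $\delta\bigl(a(\bs t,t)\Lambda_{\pi_{\{1\}\cup S}\bs f(x)}\bigr)\ge e^{-R(t)}$ for all $(\bs t,t)\in C_R$, with $R$ the function associated to $\kappa\psi_l$ via Lemma~\ref{lem:C1}; the dual condition is obtained analogously with the flow $a(t,\bs t)$. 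Since only a countable set of multi-times needs to be tested, I discretise to $\bs t\in\beta\mb Z^l\cap C_R$ for a stretching parameter $\beta>0$, losing only a constant in $\kappa$. The two conditions thereby become the avoidance of a countable family of \emph{dangerous sets} of the forms \eqref{simultset} and \eqref{dualset}, ranging over integer vectors $\binom{b_0}{\bs b}$ and discretised $\bs t$.

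Next, I build $\mc{I}_k$ recursively from $\mc{I}_0=\{I\}$: partition each interval of $\mc I_{k-1}$ into $r_{k-1}$ equal subintervals and retain only those intersecting no dangerous interval whose time parameter $t=t_1+\cdots+t_l$ lies in a window $[\tau_{k-1},\tau_k)$, chosen so that the width $e^{-R(\tau_k)}$ of a dangerous set is matched to the level-$k$ scale $|I|\prod_{j<k}r_j^{-1}$. Under mild geometric assumptions on $\bs t$ to be established in Section~\ref{sec:dansets}, each dangerous set is itself an interval, so controlling the local characteristic $\Delta_k$ from \eqref{eq:localchar} reduces to the counting problem~\equ{countingproblem}: for every $J\in r_{k-1}^{-1}\mc I_{k-1}$ and every active $\bs t$, bound the number of vectors $\binom{b_0}{\bs b}$ for which \eqref{simultset} or \eqref{dualset} meets $J$.

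For the counting I follow the AM--GM strategy of Section~2. Because $J$ still contains surviving points, I pick $y\in J$ outside all previously removed dangerous intervals; then $(y,\pi_S\bs\alpha)$ retains the inductive Diophantine lower bounds \eqref{eq:multinductivesim}--\eqref{eq:multinductivedual}. For any lattice vector $\bs v=a(\bs t,t)u_{\binom{y}{\pi_S\bs\alpha}}\binom{b_0}{\bs b}$ with no zero coordinates, $\det a(\bs t,t)=1$ combined with \eqref{eq:AM-GM}--\eqref{eq:AM-GM1} and the inductive bounds produces a nontrivial lower bound on $\|\bs v\|$ in terms of a power of $h_l^{-1}$; this forces most integer vectors to produce empty dangerous sets and yields the claimed ``$1$ in $t^{l-1}$'' frequency. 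The dual counting is handled symmetrically using \eqref{eq:multinductivedual} on the flow $a(t,\bs t)$.

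The main obstacle is the degenerate case when $\bs v$ has vanishing coordinates, since then $\Pi_+(\bs v)$ no longer factors as in \eqref{eq:AM-GM1} and $\det a(\bs t,t)=1$ gives no information. To handle this I estimate simultaneously the first minimum $\lambda_1$ of $a(\bs t,t)\Lambda_{\pi_{\{1\}\cup S}\bs f(x)}$ and the first minimum $\lambda_1^*$ of its dual lattice; Mahler's bound \cite{Mah} gives $\lambda_1\lambda_1^*\gg 1$, so whichever minimum is realised by a vector with no zero entries admits the favourable lower bound. The essential observation is that the geometric dual of the simultaneous lattice coincides (up to transposition) with the lattice governing the dual Diophantine condition on the same projected fibre, so \emph{both} inductive hypotheses feed into one counting bound. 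Finally, in Section~\ref{sec:conclusion} I will tune $\kappa$, $\beta$, $|I|$, and a growth rate $r_k\to\infty$ so that the summed contributions to \eqref{eq:localchar} stay below any preassigned $\varepsilon$, establishing $r_k$-Cantor-richness.
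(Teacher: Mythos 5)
Your proposal matches the paper's strategy in outline: the discretised multiplicative Dani correspondence, dangerous intervals, the Cantor-set machinery of \cite{BV}, lattice-point counting via AM--GM fed by the inductive hypotheses \eqref{eq:multinductivesim}--\eqref{eq:multinductivedual}, and exploiting duality to handle vanishing coordinates. However, the central step in your counting argument is misattributed in a way that breaks it. You write that ``Mahler's bound gives $\lambda_1\lambda_1^*\gg 1$'', and conclude that at least one of the first minima is automatically large. This is false. For a general unimodular lattice -- e.g.\ $\textup{diag}(\varepsilon,\varepsilon^{-1})\Z^2$, for which $\lambda_1\lambda_1^*=\varepsilon^2$ -- the product $\lambda_1\lambda_1^*$ can be arbitrarily small, and Mahler's transference estimate $\delta_i\delta_{d+1-i}^*\asymp 1$ (Theorem~\ref{thm:Ban}) pairs $\delta_1$ with $\delta_{l+1}^*$, not with $\delta_1^*$, so it gives \emph{no} lower bound on $\lambda_1\lambda_1^*$. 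In the paper the lower bound on $\lambda_1\lambda_1^*$ (or on one of the two) has nothing to do with transference: it comes from applying the AM--GM inequality \eqref{eq:AM-GM} separately to the vector realising $\lambda_1$ and to the vector realising $\lambda_1^*$, using the inductive Diophantine bounds of Lemmas~\ref{lem:dualinductivehyp} and \ref{lem:siminductivehyp} on each, and only then multiplying the two estimates. Mahler's theorem serves a different purpose, downstream of this: once $\lambda_1^*$ is bounded below, Theorems~\ref{thm:Min2} and \ref{thm:Ban} convert that into lower bounds on the Grassmannian minima $\lambda_i\asymp\delta_1^*\dotsm\delta_{l+1-i}^*\geq(\lambda_1^*)^{l+1-i}$, which are the quantities actually entering the Blichfeldt--Minkowski counting bound in Corollary~\ref{cor:counting}.

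You also do not address two structural wrinkles that are essential to close the count. First, in the simultaneous case (Lemma~\ref{lem:alblockssim}) one cannot control $\lambda_1$ directly at all: an unremoved $y$ gives no lower bound on $b_0\prod_i|b_i+b_0 f_i(y)|$ when the relevant times satisfy $t'<t$ but $t'+t_1'\geq T$, so the entire argument is run through $\lambda_1^*$ alone. Second, there is a degenerate regime $t_1<2lR(t)$ where this still fails and a separate argument (Lemma~\ref{lem:alblockssim2}) is required, involving a larger auxiliary window $\tilde J$ together with the hyperplane exclusion Lemma~\ref{lem:directionofpi} to rule out the direction $\bs e_1$. As stated, your proposal would get stuck at both the false transference inequality and these two missing cases.
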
}

\section{The Multiplicative Dani Correspondence}
\label{sec:MultDani}

Let us start {this section} with a historical interlude. For $m,n\in\N$ %let
%^$G = SL(k,\R)$
%$G = \SL_{m+n}(\R)$,  %where $n = k+\ell$ for some $k,\ell\in\N$,
%and 
consider
the subgroup $\{a_t\}$ of $\SL_{m+n}(\R)$, where
\begin{equation} \label{eq:gt-mn}
a_t = \text{\rm diag}(\underbrace{e^{t/m},\dots,e^{t/m}}_{\text{$m$ times}},\underbrace{e^{-t/n},\dots,e^{-t/n}}_{\text{$n$ times}})\,.%\tag 4.1
\end{equation}
A connection between the behavior of certain $a_t$-trajectories in the space of unimodular lattices in $\R^{m+n}$ and simultaneous Diophantine approximation was implicitly observed by Davenport and Schmidt \cite{DS} in the late 1960s, and explicitly written down by Dani in 1985 \cite{Dani:corr}. Later, this connection, in a more general form, was called "Dani Correspondence" \cite{KM99}.
The next lemma %from \cite{KM99}:
is a special case of \cite[Lemma 8.3]{KM99}, which has repeatedly been used in the past to set-up the correspondence between dynamics and Diophantine approximation.

\begin{lem}
\label{lem:C1}  
%For $x_0 > 0$ and any %smooth
For any $m,n\in\N$ %,  $x_0 > 0$, 
and  any  {continuous}
 non-increasing function \linebreak $\psi:{(}0,\infty)\to(0,1]$ there exists a unique 
{continuous}
 %smooth 
 function 
 $R:[0,\infty)\to \R$  
 %, where
%\eq{t0}{t_0   %\frac m{m+n} \log x_0 
% = nR(t_0) = \frac n{m+n} \log\frac1{ \psi(1)} \ge 0,}
% $R:[0,\infty)\to \mb{R}$, 
such that %
%the map
\eq{incr}{\text{the map }t\mapsto t-nR(t) \text{ is strictly increasing and tends to $\infty$ as } t\to\infty,}
%is strictly increasing, %the map
\eq{nondecr}{\text{the map }t\mapsto t+mR(t)\text{ is non-decreasing,}}
%is non-decreasing, 
and %for all $t\geq t_0$ it holds that
\eq{corr}{\psi\left(e^{t-nR(t)}\right)=e^{-t-mR(t)}\quad\forall\,t\geq 0.}
Conversely, given %$t_0 \ge 0$ and 
a %smooth 
 {continuous}
function $R:[{0},\infty)\to \mb{R}$ {with $R(0) \ge 0$} satisfying %such that the map %$t\mapsto t-nR(t)$ 
\equ{incr} %is strictly increasing 
and % the map %$t\mapsto t+mR(t)$ 
\equ{nondecr},  
%is non-decreasing, 
there exists a unique 
%smooth 
 {continuous}
    non-increasing function $\psi:[{e^{- nR(0)}},\infty)\to(0,{1]}$
    %, with ${x_{0}=e^{- nR(0)}}$, 
    such that \equ{corr} holds. %  for all $t\geq t_0$.
%$$\psi\left(e^{t-nR(t)}\right)=e^{-t-mR(t)}.$$
\ignore{For any smooth non-increasing function $\psi:[0,\infty)\to(0,1]$ there exists a unique smooth function $R:[0,\infty)\to \mb{R}$, such that $R(0)\geq 0$, the map
$$t\mapsto t-nR(t)$$
is strictly increasing, the map
$$t\mapsto t+mR(t)$$
is non-decreasing, and for all $t\geq 0$ it holds
$$\psi\left(e^{t-nR(t)}\right)=e^{-t-mR(t)}.$$
Conversely, given a smooth function $R:[0,\infty)\to \mb{R}$ such that the map $t\mapsto t-nR(t)$ is strictly increasing and the map $t\mapsto t+mR(t)$ is non-decreasing, there exists a unique smooth non-increasing function $\psi:[x_{0},\infty)\to(0,\infty)$, with $x_{0}=e^{-nR(0)}$, such that for all $t\geq 0$ it holds
$$\psi\left(e^{t-nR(t)}\right)=e^{-t-mR(t)}$$.}
\end{lem}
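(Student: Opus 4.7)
The plan is to transform the implicit relation \equ{corr} into an explicit one via a linear change of variables. Set $s := t - nR(t)$ and $u := t + mR(t)$; since the $2\times 2$ matrix $\begin{pmatrix} 1 & -n \\ 1 & m \end{pmatrix}$ is invertible, this is a linear bijection between pairs $(t,R)\in\R^{2}$ and $(s,u)\in\R^{2}$, with inverse $t = \frac{ms + nu}{m+n}$ and $R = \frac{u - s}{m+n}$. Under this substitution, \equ{corr} becomes simply $u = \Phi(s)$, where $\Phi(s) := -\log \psi(e^{s})$. Since $\psi:[0,\infty)\to(0,1]$ is continuous and non-increasing, $\Phi$ is continuous, non-decreasing and takes values in $[0,\infty)$.

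For the forward direction, I would define $R(t)$ for each $t\ge 0$ as the unique solution of the scalar equation $G_{t}(R) = 0$, where $G_{t}(R) := (t + mR) - \Phi(t - nR)$. The function $G_{t}$ is continuous in $R$, its increment over any interval $[R',R'']$ with $R' < R''$ is at least $m(R''-R')$ (because $-\Phi(t-nR)$ is non-decreasing in $R$, as $\Phi$ is non-decreasing), and $G_{t}(R) \to \pm\infty$ as $R \to \pm\infty$; so $G_{t}$ has a unique zero, producing $R(t)$ uniquely. Continuity of $t\mapsto R(t)$ follows from a standard monotonicity argument applied to $G_{t}$. The required monotonicity properties \equ{incr}--\equ{nondecr} are then verified via the linear change of variables: if $t' < t''$ but $s(t') \geq s(t'')$, then $u(t') \geq u(t'')$ by monotonicity of $\Phi$, and combining these inequalities with $t = (ms + nu)/(m+n)$ yields $t' \geq t''$, a contradiction; hence $s$ is strictly increasing, and a symmetric argument shows $u$ is non-decreasing. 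Divergence $s(t) \to \infty$ is obtained by ruling out a finite limit using the identity $(m+n)t = ms(t) + n\Phi(s(t))$ and the continuity of $\Phi$.

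For the converse, given $R:[t_{0},\infty)\to \R$ satisfying \equ{incr}--\equ{nondecr}, I would define $s(t) := t - nR(t)$, which is continuous, strictly increasing, and tends to infinity, so it possesses a continuous inverse $t = t(s)$ defined on $[\log x_{0}, \infty)$. Setting $\psi(x) := \exp\bigl(-t(\log x) - m R(t(\log x))\bigr)$ on $[x_{0}, \infty)$ produces the required function: it is continuous as a composition of continuous functions, non-increasing because $u(t) = t + mR(t)$ is non-decreasing in $t$ while $t(\log x)$ is increasing in $x$, and satisfies \equ{corr} by construction. Uniqueness is automatic, since \equ{corr} prescribes the value of $\psi$ at every point of the form $e^{s(t)}$, and the map $t\mapsto e^{s(t)}$ sweeps out exactly $[x_{0},\infty)$.

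The main technical nuisance is verifying the strict monotonicity and divergence of $s(t)$ without any differentiability assumption on $\psi$: these properties cannot be read off by differentiating \equ{corr} and must be extracted purely from monotonicity together with the explicit linear bijection above. Once the $(s,u)$-reformulation is in place, however, they reduce to short elementary arguments.
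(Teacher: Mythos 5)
The paper does not prove this lemma: it is stated as a special case of \cite[Lemma 8.3]{KM99} and cited without proof, so there is no in-paper argument to compare against. Your proof is a correct, self-contained argument. The reduction via the linear change of variables $(t,R)\leftrightarrow(s,u)=(t-nR,\,t+mR)$, which turns the implicit constraint \equ{corr} into the explicit graph relation $u=\Phi(s)$ with $\Phi(s):=-\log\psi(e^s)$ continuous, non-decreasing and non-negative, is clean and is essentially the same structural idea that underlies \cite[Lemma 8.3]{KM99}. The details you give for each step check out: $G_t(R)=(t+mR)-\Phi(t-nR)$ has increments at least $m(R''-R')$ and tends to $\pm\infty$ (using that $\Phi$ is bounded below by $0$, hence $-\Phi(t-nR)$ stays bounded as $R\to+\infty$), so each $R(t)$ is well defined; your monotonicity argument for $s$ and $u$ via $t=(ms+nu)/(m+n)$ is correct; and the divergence of $s(t)$ follows from $(m+n)t=ms(t)+n\Phi(s(t))$ as you say. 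Two small points worth spelling out if this were to be written in full: (i) continuity of $t\mapsto R(t)$ should be justified by first showing $R(t')$ stays in a compact neighborhood of $R(t)$ (using $G_t(R(t)\pm 1)\gtrless 0$ and continuity of $G$ in $t$), then invoking the $m$-Lipschitz lower bound on $G_t$ together with uniform continuity of $\Phi$ on compacta; and (ii) it is convenient to observe $G_0(0)=-\Phi(0)=\log\psi(1)\le 0$, which yields $R(0)\ge 0$, a fact used later in the paper.
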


%\comm{In your version you demanded $R(0) \ge 0$. However you cannot guarantee it in general. For example let $\psi(x) = 1/e$ for all $x$. Then we will have $R(t) = -\frac{t+1}m$, in particular $R(0)  = -1/m< 0$.
%But  it's OK since we   have $t_0 - nR(t_0) =0$.}

\begin{rmk}\label{rmk:4.2} \rm {%To derive the above lemma from  
More generally, \cite[Lemma 8.3]{KM99} established a correspondence between functions 
$\psi:[x_0,\infty)\to (0,\infty)$ and   $R:[t_0,\infty)\to \R$, with $x_0,t_0\in\mb R$. Lemma \ref{lem:C1} can be obtained by letting $t_0=0$ and assuming that $\psi(x) \le 1$ for all $x$.} Further, observe that \equ{corr} and the condition that the image of $\psi$ is contained in the interval $(0,1]$ imply that $R(0)\geq 0$; {and, conversely, the assumption $R(0)\geq 0$, in view of \equ{corr}, implies that $\psi (e^{-nR(0)}) = e^{-mR(0)} \le 1$}. Note also that in \cite{KM99}, just the continuity of the functions $\psi$ and $R$ was assumed; however, it is easy to see that the smoothness (perhaps on some range) of one function follows easily from that of the other.\end{rmk}

\begin{rmk}
{In order to apply Lemma \ref{lem:C1}, we are going to adopt the following convention: whenever the function $\psi$ is defined for sufficiently large values of the argument (say for $x\ge x_0$), it will be extended by default to the whole positive half-line $(0,\infty)$ by letting $\psi(x) := \psi(x_0)$ for $0 < x < x_0$.}    
\end{rmk}

To state the standard form of the correspondence between approximation and dynamics on the space of lattices, for $\psi$ as above % and let $A\subset\mb{R}^{m\times n}$. 
and $T\geq 1$ let us define
\begin{equation}
{\mc{S}_{m,n}(%A,
\psi,T):= \\
\left\{Y\in \mb{R}^{m\times n}:\exists\, \bs{p}\in\mb{Z}^{m},\, \bs{q}\in\mb{Z}^{n}\setminus\{\bs{0}\}\mbox{ s.t.}\begin{cases}
%\max_{i=1}^{m}|Y_{i}\bs{q}-p_{i}
{\|Y\bs{q}- \bs{p}}\|^m<\psi(T) \\
%\max_{j=1}^{n}|{q_j}
{\|\bs{q}}\|^n<T
\end{cases}
\right\}\nonumber}.
\end{equation}
 Note that $\mc{S}_{m,n}(%A,
\psi,T)$ is contained in the set $\mc{S}_{m,n}^\times(%A,
\psi,T)$ defined in the {Introduction. Consider the set} 
\begin{equation}\label{diophlimsupclassical}{\textup{W}_{m,n}(\psi):=\bigcap_{T_0\geq 1}\bigcup_{T\geq 1}\mc{S}_{m,n}(%A,
\psi,T)}\nonumber\end{equation}
of $\psi$-approximable matrices. {Recall the definitions \equ{lambday} and  \equ{1min} of $\Lambda_Y$ and $\delta(\Lambda)$.}
\ignore{Given a matrix $Y\in \mb{R}^{m\times n}$, {generalizing \equ{lambdaalpha}},  define \eq{lambday}{\Lambda_{Y}:= {u_{Y}\mb Z^{m+n}:=}\begin{pmatrix}
I_{m} & Y \\
\bs{0} & I_{n}
\end{pmatrix}\mb{Z}^{m+n}.}
{Also} let \eq{1min}{\delta(\Lambda):= \inf_{\bs{v}\in\Lambda\setminus\{\bs{0}\}}|\bs{v}|} denote the first minimum of a lattice $\Lambda\subset\mb{R}^{m+n}$ with respect to the supremum norm.} Then we have the following statement, which is a variation on \cite[Theorem 8.5]{KM99} (we omit the proof since it can be easily reconstructed from the proof of its multiplicative analog, Proposition \ref{prop:C2}):
\begin{prop}
\label{prop:C1}
Let %$A\subset\mb{R}^{m\times n}$ and 
$\psi:{(0},\infty)\to(0,1]$ be a  {continuous}
 non-increasing function,  and let $R$ be the function corresponding to $\psi$ via Lemma  \ref{lem:C1}.
 %, and let $t_0$ be as in \equ{t0}. 
 Take $Y\in \mb{R}^{m\times n}$ and $T\geq 1$. Then $Y\in\mc{S}_{m,n} (%A,
\psi,T)$ if and only if 
%there exists a vector $(\bs{t},\bs{u})\in C_{R}$ such that
\begin{equation}
\label{eq:deltastandard}
\delta (a_t\Lambda_{Y}) < e^{-R(t)},%\nonumber
\end{equation}
where $t\ge 0$ is defined by \eq{Tt}{T=e^{t-nR(t)}.} {Consequently $Y\in\textup{W}_{m,n} (%A,
\psi)$ if and only if \eqref{eq:deltastandard} is satisfied for an unbounded set of $t \ge 0$.}
% and $\delta(\Lambda)$ denotes the first minimum of a lattice $\Lambda\subset\mb{R}^{m+n}$.
\end{prop}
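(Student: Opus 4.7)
The strategy is to unwind the first-minimum condition into an explicit inequality on integer vectors and then match it with the definition of $\mathcal{S}_{m,n}(\psi,T)$ via the relation $T = e^{t-nR(t)}$ supplied by Lemma~\ref{lem:C1}. Any nonzero vector of $\Lambda_Y$ can be written as $u_Y\binom{-\bs{p}}{\bs{q}} = \binom{Y\bs{q}-\bs{p}}{\bs{q}}$ for some $(\bs{p},\bs{q})\in\mathbb{Z}^m\times\mathbb{Z}^n\setminus\{\bs{0}\}$, and applying $a_t$ from \equ{gt-mn} scales the first $m$ coordinates by $e^{t/m}$ and the last $n$ by $e^{-t/n}$, giving
$$\left\|a_t u_Y\binom{-\bs{p}}{\bs{q}}\right\| = \max\bigl(e^{t/m}\|Y\bs{q}-\bs{p}\|,\; e^{-t/n}\|\bs{q}\|\bigr).$$
Hence $\delta(a_t\Lambda_Y) < e^{-R(t)}$ is equivalent to the existence of $(\bs{p},\bs{q})\neq(\bs{0},\bs{0})$ with $\|Y\bs{q}-\bs{p}\| < e^{-t/m-R(t)}$ and $\|\bs{q}\| < e^{t/n-R(t)}$.

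Next I would raise these two bounds to the $m$-th and $n$-th powers respectively and invoke the defining equation \equ{corr} of Lemma~\ref{lem:C1} with $T = e^{t-nR(t)}$; this converts them into $\|Y\bs{q}-\bs{p}\|^m < \psi(T)$ and $\|\bs{q}\|^n < T$, which are precisely the defining conditions of $\mathcal{S}_{m,n}(\psi,T)$. A small aside is needed to rule out $\bs{q}=\bs{0}$: since $\psi$ takes values in $(0,1]$, equation \equ{corr} forces $t+mR(t)\ge 0$, hence $e^{-t/m-R(t)}\le 1$, so if $\bs{q}=\bs{0}$ the first inequality would give $\|\bs{p}\|<1$, i.e.\ $\bs{p}=\bs{0}$, contradicting nontriviality of the integer vector.

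Finally, I would verify that the map $t\mapsto T = e^{t-nR(t)}$ is a homeomorphism from $[0,\infty)$ onto $[e^{-nR(0)},\infty)$ by property \equ{incr}, and since $R(0)\ge 0$ (see the remark following Lemma~\ref{lem:C1}) the range contains $[1,\infty)$; thus for every admissible $T\ge 1$ there is a unique corresponding $t\ge 0$, completing the first assertion. The \emph{consequently} clause follows by passing to unions and intersections: $t\to\infty$ iff $T\to\infty$ under this monotone correspondence, so the condition that \equ{deltastandard} holds for an unbounded set of $t\ge 0$ is equivalent to $Y\in\bigcap_{T_0\ge 1}\bigcup_{T\ge T_0}\mathcal{S}_{m,n}(\psi,T)=\mathcal{S}_{m,n}(\psi)$. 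There is no genuine obstacle here — the argument is algebraic bookkeeping forced by \equ{corr}, and the only mild subtlety is the $\bs{q}=\bs{0}$ exclusion above; as stated, this result is a variation on \cite[Theorem 8.5]{KM99}.
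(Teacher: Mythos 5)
Your proof is correct, and it supplies exactly what the paper itself declines to write out: the paper states that the proof of Proposition~\ref{prop:C1} ``can be easily reconstructed from the proof of its multiplicative analog, Proposition~\ref{prop:C2}.'' What you have written is precisely that reconstruction. The bookkeeping you perform — writing a generic lattice vector as $a_t u_Y\binom{-\bs{p}}{\bs{q}}$, splitting the sup norm into the two blocks, and matching the two resulting bounds against \equ{corr} after raising to the $m$-th and $n$-th powers — is the one-parameter specialization of the argument in Proposition~\ref{prop:C2}. In fact your version is \emph{simpler} than that argument, because in the standard case the diagonal weights $t_i = t/m$, $u_j = t/n$ are fixed, so there is no need for the ``decrease/increase the $t_i$, $u_j$ within the cone'' adjustment that occupies most of the proof of Proposition~\ref{prop:C2}. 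You correctly flag and close the two small gaps that a careless reader might miss: the exclusion of $\bs{q}=\bs{0}$ (which uses $t+mR(t)\ge 0$, itself a consequence of \equ{corr} and $\psi\le 1$), and the fact that $t\mapsto e^{t-nR(t)}$ is a strictly increasing bijection from $[0,\infty)$ onto $[e^{-nR(0)},\infty)\supseteq[1,\infty)$ so that the relation \equ{Tt} genuinely defines $t$ from $T$. The passage to the ``consequently'' clause is routine once the bijection is monotone and proper. No issues.
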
 
{For $s > 0$ %and $x\geq 1$ 
define $\varphi_s(x):=x^{-s}$}. A classical case is that of $\psi = c\varphi_1$, where $0< c \le 1$. In this setting, \equ{corr} gives $e^{-R(t)} = c^{\frac1{m+n}}$ {for all $t \ge 0$} --- a constant function. Now, recall that a matrix $Y$ is said to be \textit{badly approximable} if 
there exists $c> 0$ such that $Y$ is not in the set $\mc{S}_{m,n}(%A,
c\varphi_1,T)$
for all $T\ge 1$. This, via Proposition \ref{prop:C1}, translates into the condition $$\delta (a_t\Lambda_{Y})\geq c^{\frac1{m+n}}\text{ for some $c>0$ and all }%large enough values of }
{t\ge 0}.$$ In view of Mahler's Compactness Criterion (see e.g.\  \cite{Cas}) this is equivalent to  the $a_t$-trajectory of $\Lambda_{Y}$ being bounded, 
%which is the original context of Dani's 1985 paper 
{cf.\ \cite[Theorem 2.20]{Dani:corr}}.

The goal of this section is to extend the  correspondence described above to the multiplicative set-up. Such extensions have been considered before but only for some special cases, see \cite{KLW, KM98, KM99}. We are going to state a precise and most general multiplicative analog of Proposition \ref{prop:C1}. As in {the aforementioned} papers, this is done by considering the multi-parameter action of a certain cone in the group of diagonal matrices. The new ingredient, however, is the observation that in order to achieve a one-to-one correspondence between multiplicative approximation and dynamics, one has to adjust the acting cone based on  the approximating function.

{Recall that for $\bs{t}\in\R^m$ and $\bs{u}\in\R^n$ %with $\sum_{i=1}^{m}t_{i}=\sum_{j=1}^{n}u_{j}$ 
we defined a diagonal matrix $a(\bs{t},\bs{u})\in\SL_{m+n}(\R)$ via
\equ{defatu} %, and that we let 
{with the convention that} $t := \sum_{i=1}^{m}t_{i}=\sum_{j=1}^{n}u_{j}$ as in  \eqref{convention}.}
For
a function $R:[{0,\infty)\to \R}$ we set
\begin{equation}\label{eq:cone}
C_{R}:=  
\big\{(\bs{t},\bs{u}):\bs{t}\in\mb{R}^{m},\, \bs{u}\in\mb{R}^{n},\ {t\ge 0},\ %\sum_{i=1}^{m}t_{i}=\sum_{j=1}^{n}u_{j} = {t\ge 0}\mbox{ and }
t_{i}> -R(t),\, u_{j}>R(t)%\mbox{ for all }
\ \forall\,i,j\big\}.%\nonumber
\end{equation}
%$$a(\bs{t},\bs{u}):=\textup{diag}\left(e^{t_{1}},\dotsc,e^{t_{m}},e^{-u_{1}},\dotsc,e^{-u_{n}}\right),$$
%and
%$$\Lambda_{Y}:=\begin{pmatrix}
%I_{m} & Y \\
%\bs{0} & I_{k}
%\end{pmatrix}\mb{Z}^{m+n}.$$
%In fact it will be our standing convention that for $\bs{t}$ and $\bs{u}$ as above we will always have   $$t = \sum_{i=1}^{m}t_{i}=\sum_{j=1}^{n}u_{j}.$$
Then  the following result holds.

\begin{prop}
\label{prop:C2}
Let %$A\subset\mb{R}^{m\times n}$ and 
$\psi:{(0},\infty)\to(0,1]$ be a  {continuous}
 non-increasing function, and let $R$ be the function corresponding to $\psi$ via Lemma  \ref{lem:C1}.
 %, and let $t_0$ be as in \equ{t0}. 
 Take $Y\in \mb{R}^{m\times n}$ and $T\geq 1$. Then
%Let %$A\subset\mb{R}^{m\times n}$ and 
%$T\geq 1$. For all $Y\in \mb{R}^{m\times n}$ we have that 
$Y\in\mc{S}_{m,n}^{\times}(%A,
\psi,T)$ if and only there exists a vector $(\bs{t},\bs{u})\in C_{R}$, with ${t}%\ge 0
$ defined by \equ{Tt}, such that
\begin{equation}
\label{eq:deltaC}
%\begin{cases}
\delta \left(a(\bs{t},\bs{u})\Lambda_{Y}\right) < e^{-R(t)}.
\end{equation}
%\\
%T=e^{t-nR(t)}
%\end{cases}
{Consequently $Y\in\textup{W}_{m,n}^{\times} (%A,
\psi)$ if and only if \eqref{eq:deltaC} is satisfied for an unbounded set of $(\bs{t},\bs{u})\in C_{R}$.}
%where $t=\sum_{i=1}^{m}t_{i}=\sum_{j=1}^{n}u_{j}$ and $\delta(\Lambda)$ denotes the first minimum of a lattice $\Lambda\subset\mb{R}^{m+n}$.
\end{prop}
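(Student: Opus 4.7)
The key is to rewrite the supremum-norm bound \eqref{eq:deltaC} coordinatewise. Any vector of the lattice $a(\bs{t},\bs{u})\Lambda_{Y}$ has components $e^{t_i}(Y_i\bs{q}-p_i)$ for $i=1,\dots,m$ and $e^{-u_j}q_j$ for $j=1,\dots,n$, where $(\bs{p},\bs{q})\in\mb{Z}^{m+n}$, and such a vector is nonzero iff $(\bs{p},\bs{q})\ne(\bs{0},\bs{0})$. So \eqref{eq:deltaC} is equivalent to the existence of $(\bs{p},\bs{q})\ne\bs{0}$ in $\mb{Z}^{m+n}$ satisfying $|Y_i\bs{q}-p_i|<e^{-t_i-R(t)}$ for all $i$ and $|q_j|<e^{u_j-R(t)}$ for all $j$. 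For $(\bs{t},\bs{u})\in C_R$ one has $e^{-t_i-R(t)}<1$ and $e^{u_j-R(t)}>1$; the former forces $\bs{q}\ne\bs{0}$, since $\bs{q}=\bs{0}$ would give $p_i\in\mb{Z}$ with $|p_i|<1$ and thus $\bs{p}=\bs{0}$, contradicting nontriviality.

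With this dictionary in hand, the forward direction is short. Multiplying the $m$ upper inequalities and invoking \equ{corr} together with $t=\sum_i t_i$ and \equ{Tt} yields
$$\prod_{i=1}^m |Y_i\bs{q}-p_i| < e^{-t-mR(t)} = \psi\!\left(e^{t-nR(t)}\right) = \psi(T).$$
From $u_j>R(t)$ combined with $|q_j|<e^{u_j-R(t)}$, one gets $|q_j|_+<e^{u_j-R(t)}$ for every $j$, so $\Pi_+(\bs{q})<e^{t-nR(t)}=T$, establishing $Y\in\mc{S}_{m,n}^\times(\psi,T)$.

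For the backward direction, suppose $(\bs{p},\bs{q})$ witnesses $Y\in \mc{S}_{m,n}^\times(\psi,T)$. I first replace each $p_i$ by an integer nearest to $Y_i\bs{q}$, which only decreases the product and ensures $|Y_i\bs{q}-p_i|\le 1/2$. Let $t$ be determined by \equ{Tt}, and set $L_i := -\log|Y_i\bs{q}-p_i|\in [\log 2,\infty]$ and $M_j := \log|q_j|_+\ge 0$. The hypotheses combined with \equ{corr} and \equ{Tt} translate to the strict inequalities
$$\sum_i L_i > t+mR(t)\quad\text{and}\quad \sum_j M_j < t-nR(t).$$
The core step is a distribution argument: choose $s_i\in(0,L_i)$ with $\sum_i s_i = t+mR(t)$, and $v_j>M_j$ with $\sum_j v_j = t-nR(t)$ --- both possible by the above slack inequalities --- and set $t_i := s_i - R(t)$, $u_j := v_j + R(t)$. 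A direct check shows $\sum_i t_i = \sum_j u_j = t$, $(\bs{t},\bs{u})\in C_R$, and the lattice vector associated to $(\bs{p},\bs{q})$ satisfies \eqref{eq:deltaC}. The ``unbounded'' equivalence for $Y\in\mc{S}_{m,n}^\times(\psi)$ follows from the first statement applied to a sequence $T_n\to\infty$, noting that $t\mapsto e^{t-nR(t)}$ is strictly increasing by \equ{incr}.

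The main obstacle is precisely the distribution step in the backward direction: one has only a product bound on the $|Y_i\bs{q}-p_i|$'s and a product bound on $\Pi_+(\bs{q})$, yet must produce individual coordinate inequalities with prescribed sums $\sum t_i = t = \sum u_j$, all while staying inside the cone $C_R$. This is precisely the reason the cone must be calibrated to $\psi$ (through $R$): the defining inequalities of $C_R$ leave exactly enough slack to absorb the two aggregate bounds above, no more and no less.
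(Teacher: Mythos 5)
Your proof is correct and follows essentially the same route as the paper: read off the coordinate-wise inequalities encoded by the sup-norm condition, then multiply to pass from the dynamical side to the Diophantine one, and conversely distribute the aggregate Diophantine bound across coordinates (your $s_i,v_j$ redistribution is exactly the paper's ``decrease the $t_i$ / increase the $u_j$ until the sums equal $t$'' step). The only small addition is that you explicitly verify $\bs{q}\neq\bs{0}$ in the dynamics-to-Diophantine direction, a detail the paper leaves implicit.
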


%\comm{My changes stop here for now. Besides fixing the proof I think we'll need to illustrate the statement by a few examples, namely BMA and VWMA matrices.}

\begin{proof}
Let us fix $T\geq 1$, $Y\in\mc{S}_{m,n}^{\times}(\psi,T)$, and let us pick $t \ge 0
$ such that $T=e^{t-nR(t)}$ (recall that $R(0)\geq 0$ and the function $t\mapsto t-nR(t)$ is strictly increasing). We start by noticing that, if $(\bs{p},\bs{q})$ is a non-trivial solution to 
\begin{equation}
\label{eq:1l2}
\prod_{i=1}^{m}|Y_{i}\bs{q}-p_{i}|<\psi(T)=\psi\left(e^{t-nR(t)}\right)=e^{-t-mR(t)},
\end{equation}
there will be a non-trivial solution $(\bs{p}',\bs{q})$ to (\ref{eq:1l2}) such that $|Y_{i}\bs{q}-p_{i}'|<1$ for all $i$. We can therefore assume that $(\bs{p},\bs{q})$ has this property. Hence, for $i=1,\dotsc,m$ we can find numbers $t_{i}> -R(t)$ (potentially infinite) such that
\eq{equality}{|Y_{i}\bs{q}-p_{i}|=e^{-t_{i}-R(t)}\text{ for all }i=1,\dotsc,m.}
%Since 
  {Inequality (\ref{eq:1l2}) %holds, 
then implies that %it cannot be 
$\sum_{i}t_{i}> t$} (with the convention that $\sum_{i}t_{i}=\infty$ if some of the parameters $t_{i}$ are infinite).   {Then one can decrease all the parameters $t_{i}$ in such a way that the vector $(t_1,\dots,t_m)$ is still in the cone %given by %we can assume that $\sum_{i}t_{i}=t$, that 
$\big\{t_{i}>-R(t),\ i=1,\dotsc,m\big\}$, %note that $t\geq -mR(t)$ for all $t$), 
and at the same time $\sum_{i}t_{i}=t$. This way all the equalities in \equ{equality} will turn into strict inequalities, that is, we have}
\begin{equation}
\label{eq:max1}
\max_{i}e^{t_{i}}|Y_{i}\bs{q}-p_{i}|<e^{-R(t)}.
\end{equation}
Further, we observe that, by our assumption, it holds that
$$\Pi_{+}(\bs{q})%:=q_{1}^{+}\dotsm q_{n}^{+}
<T=e^{t-nR(t)}.$$
Hence, for $j=1,\dotsc,n$ we can find $u_{j}\geq R(t)$ such that
$${|q_{j}|_{+}}=e^{u_{j}-R(t)}.$$
%Once more, it cannot be 
  {The two inequalities above imply that $\sum_{j}u_{j} < t$. Therefore, by increasing all the parameters $u_{j}$,} we can assume that $\sum_{j}u_{j}=t$, that $u_{j}>R(t)$ for $j=1,\dotsc,n$, and that
\begin{equation}
\label{eq:max2}
\max_{j}e^{-u_{j}}{|q_{j}|_{+}}<e^{-R(t)}.
\end{equation}
Now, (\ref{eq:max1}) and (\ref{eq:max2}) imply (\ref{eq:deltaC}), concluding the proof of this implication.

On the other hand, assume that (\ref{eq:deltaC}) holds. Then for $i=1,\dotsc,m$ we have
$$e^{t_{i}}|Y_{i}\bs{q}-p_{i}|<e^{-R(t)},$$
whence
$$\prod_{i=1}^{m}|Y_{i}\bs{q}-p_{i}|<e^{-t-mR(t)}=\psi(T).$$
Moreover, for $j=1,\dotsc,n$ we have
$e^{-u_{j}}q_{j}<e^{-R(t)},$
  {or, equivalently,}
$$q_{j}<e^{u_{j}-R(t)},$$
%Now, 
  {which, since $u_{j}-R(t)>0$, %it also holds
can be strengthened to}
$${|q_{j}|_{+}}<e^{u_{j}-R(t)}.$$
Then, by multiplying these inequalities for $j=1,\dotsc,n$, we obtain
$$\prod_{j=1}^{n}{|q_{j}|_{+}}<e^{t-nR(t)}=T,$$
concluding the proof. %\comm{What should we say about the consequently part.}\reynew{
{The second assertion of the proposition follows trivially.}
\end{proof}

\begin{rmk}\label{rmk:EKL} \rm   {We point out that the novelty of the correspondence presented in the %the 
  above proposition is the appearance of the cone ${C}_R$ which depends on $R$, and thus implicitly on $\psi$. 
Previous  versions of this correspondence were utilizing the cone 
$$C_{0} =  
\left\{(\bs{t},\bs{u}):\bs{t}\in\mb{R}^{m},\, \bs{u}\in\mb{R}^{n},\ \sum_{i=1}^{m}t_{i}=\sum_{j=1}^{n}u_{j} %= t
\mbox{ and }t_{i}> 0,\, u_{j}>0%\mbox{ for all }
\ \forall\,i,j\right\}.$$ 
And indeed, in some special cases the correspondence can be reduced to $C_0$.} {For example,
% consider again the case  $\psi = c\varphi_1$ for some $0< c \le 1$; then one has that $R(t) = {\frac1{m+n}}\log \frac1c\equiv \operatorname{const}$.
it 
%can be shown 
is proved by Einsiedler, Katok and Lindenstrauss %(see, e.g., 
\cite[Proposition 11.1]{EKL}
that
$Y\in\R^{2\times 1}$ is multiplicatively badly approximable if and only if $$
\inf_{(\bs{t},\bs{u})\in C_{0}}\delta \left(a(\bs{t},\bs{u})\Lambda_{Y}\right) > 0;$$
that is, if and only if the $a(C_0)$-trajectory of $\Lambda_{Y}$ is bounded. This equivalence %gives 
was used in \cite{EKL} as a justification for a reduction of 
%a general form of 
Littlewood's Conjecture to a statement about bounded orbits in the space of lattices in $\R^3$. In fact %the aforementioned equivalence 
\cite[Proposition 11.1]{EKL} can be extended to arbitrary $m,n\in \N$. We direct the reader to the forthcoming paper \cite{BFK} for more details on this. See also \cite[Corollary 2.2]{KM98} and \cite[Proposition 3.1]{KLW} for partial results on the aforementioned correspondence in the case when $\psi = \varphi_s$  with $s > 1$. This case is related to matrices known as \emph{very well multiplicatively approximable}.}
\end{rmk}

We now state and prove a discrete version of Proposition \ref{prop:C2}, which will be useful for our computations.

\begin{cor}
\label{cor:C3}
Let $Y\in \mb{R}^{m\times n}$, and let $\beta\geq 1$ be a fixed parameter. Let $\psi:(0,\infty)\to(0,1]$ be a non-increasing function such that $\psi(cx)\gg_{m,n,\beta} c^{-\lambda}\psi(x)$ for all $x\geq 0$ and all $c\geq 1$, for a given $\lambda\geq 1$ independent of $c$. Let $R$ be the function corresponding to $\psi$ through Lemma \ref{lem:C1} and assume that
\eq{deltabound}{\delta \left(a(\bs{t},\bs{u})\Lambda_{Y}\right)> e^{-R(t)}}
for all $(\bs{t},\bs{u})\in C_{R}\cap \beta\mb{Z}^{m+n}$.
Then there exists a constant $c'>0$ only depending on $m,n,\beta$, and $\lambda$ such that $Y\notin %A\setminus 
\textup{W}_{m,n}^{\times}(%A, 
c'\psi)$.
\end{cor}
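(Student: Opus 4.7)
I argue by contrapositive. Fix $\beta\ge 1$ and suppose $Y\in\mc{S}_{m,n}^{\times}(c'\psi)$ for some $c'>0$ to be chosen in terms of $m,n,\beta,\lambda$; I will produce a point $(\bs{t},\bs{u})\in C_{R}\cap\beta\mb{Z}^{m+n}$ at which $\delta\big(a(\bs{t},\bs{u})\Lambda_{Y}\big)<e^{-R(t)}$, contradicting \equ{deltabound}. By definition of $\mc{S}_{m,n}^{\times}(c'\psi)$ there exist arbitrarily large $T\ge 1$ and nonzero $(\bs{p},\bs{q})\in\mb{Z}^{m}\times(\mb{Z}^{n}\setminus\{\bs{0}\})$ with $\prod_{i}|Y_{i}\bs{q}-p_{i}|<c'\psi(T)$ and $\Pi_{+}(\bs{q})<T$. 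Since $\psi$ is non-increasing, one may replace $T$ by $T_{0}:=\Pi_{+}(\bs{q})$; after a standard shift of $\bs{p}$ one may also assume $0<|Y_{i}\bs{q}-p_{i}|<1$ for each $i$. Define $t>0$ by $T_{0}=e^{t-nR(t)}$ via Lemma \ref{lem:C1}; by choosing $T$ large, $t$ is as large as needed.

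\textbf{Continuous parameters with slack.} Mimicking the forward direction of Proposition \ref{prop:C2}, set
\[
t_{i}^{*}:=-R(t)-\log|Y_{i}\bs{q}-p_{i}|\ (>-R(t)),\qquad u_{j}^{*}:=R(t)+\log|q_{j}|_{+}\ (\ge R(t)),
\]
so that $e^{t_{i}^{*}}|Y_{i}\bs{q}-p_{i}|=e^{-u_{j}^{*}}|q_{j}|_{+}=e^{-R(t)}$, together with $\sum_{j}u_{j}^{*}=t$ and, crucially, $\sum_{i}t_{i}^{*}>t+\log(1/c')$. Thus the factor $c'$ produces a quantitative slack of size $\log(1/c')$ in $\sum_{i}t_{i}^{*}-\sum_{j}u_{j}^{*}$, which I will now distribute to create a safety buffer of order $\beta$ between every coordinate and the corresponding face of $C_{R}$.

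\textbf{Discretisation.} Choose real numbers $t_{i}\in(-R(t)+c_{0}\beta,\,t_{i}^{*}]$ with $\sum t_{i}=t$, which is possible when $\log(1/c')>c_{1}(m,n)\beta$ (using the large slack in $\sum t_{i}^{*}$); similarly $u_{j}\in(R(t)+c_{0}\beta,\,u_{j}^{*}+\log(1/c')/n]$ with $\sum u_{j}=t$. Round each $t_{i}$ down and each $u_{j}$ up to $\beta\mb{Z}$ to obtain $\hat{t}_{i}$ and $\hat{u}_{j}$; this preserves both $e^{\hat{t}_{i}}|Y_{i}\bs{q}-p_{i}|\le e^{-R(t)}$ and $e^{-\hat{u}_{j}}|q_{j}|_{+}\le e^{-R(t)}$, and the $c_{0}\beta$ buffer guarantees $\hat{t}_{i}>-R(t)$, $\hat{u}_{j}>R(t)$ if $c_{0}$ is chosen appropriately. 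The rounded sums lie in $\beta\mb{Z}$, and their mismatch has size $O_{m,n}(\beta)$; absorbing it by at most $O_{m,n}(1)$ further increments of $\beta$ on individual $\hat{u}_{j}$'s yields $(\bs{t},\bs{u})\in\beta\mb{Z}^{m+n}$ with common sum $\tilde{t}:=\sum t_{i}=\sum u_{j}=t+O_{m,n}(\beta)$.

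\textbf{Main obstacle.} The final step is to promote the four families of inequalities from $R(t)$ to $R(\tilde{t})$ and to verify cone membership in $C_{R}$ at the parameter $\tilde{t}$. The monotonicity conditions of Lemma \ref{lem:C1} force $R$ to satisfy $|R(\tilde{t})-R(t)|\le|\tilde{t}-t|/\min(m,n)=O_{m,n}(\beta)$, and the sub-homogeneity hypothesis $\psi(cx)\gg_{m,n}c^{-\lambda}\psi(x)$ refines this into a uniform bound on the multiplicative ratio $e^{-R(t)}/e^{-R(\tilde{t})}$ in terms of $m,n,\beta,\lambda$ only (independently of $t$). Choosing the buffer constant $c_{0}$ and the smallness of $c'=c'(m,n,\beta,\lambda)$ accordingly ensures $t_{i}>-R(\tilde{t})$, $u_{j}>R(\tilde{t})$, $e^{t_{i}}|Y_{i}\bs{q}-p_{i}|<e^{-R(\tilde{t})}$ and $e^{-u_{j}}|q_{j}|_{+}<e^{-R(\tilde{t})}$. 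Setting $\bs{v}:=u_{Y}\binom{-\bs{p}}{\bs{q}}\in\Lambda_{Y}\setminus\{\bs{0}\}$ gives $\delta(a(\bs{t},\bs{u})\Lambda_{Y})\le\oldnorm{a(\bs{t},\bs{u})\bs{v}}<e^{-R(\tilde{t})}$ at the point $(\bs{t},\bs{u})\in C_{R}\cap\beta\mb{Z}^{m+n}$, contradicting \equ{deltabound}. The delicate point, which dictates the role of $\lambda$ in the final constant $c'$, is the three-way balance among the slack $\log(1/c')$, the rounding error $O_{m,n}(\beta)$, and the variation of $R$ controlled by sub-homogeneity.
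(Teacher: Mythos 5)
Your plan (argue by contrapositive: extract a Diophantine solution, cook up a lattice point $(\bs{t},\bs{u})\in C_R\cap\beta\Z^{m+n}$, contradict \equ{deltabound}) is a legitimate alternative to the paper's route, which instead starts from an arbitrary $(\bs{t}',\bs{u}')\in C_R$, rounds it to a nearby point of $C_R\cap\beta\Z^{m+n}$, transfers the lower bound on $\delta$ back to $(\bs{t}',\bs{u}')$ at the cost of a uniform factor $e^{-C}$, and then applies Proposition~\ref{prop:C2} together with the sub-homogeneity of $\psi$. However, your execution has a genuine gap on the $\bs{u}$ side.

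By replacing $T$ with $T_0:=\Pi_+(\bs{q})$ you force the identity $\sum_j u_j^*=nR(t)+\log T_0=t$ \emph{with equality}. The slack $\log(1/c')$ you extract lives entirely in $\sum_i t_i^*-t$; there is none in the $\bs{u}$ coordinates. Consequently you cannot simultaneously satisfy $u_j>R(t)+c_0\beta$ for every $j$ and $\sum_j u_j=t$ when some $|q_j|=1$ (then $u_j^*=R(t)$, and the constraint $u_j>R(t)+c_0\beta$ together with $\sum u_j=\sum u_j^*$ is impossible). More fundamentally, after rounding each $u_j$ up to $\hat u_j\in\beta\Z$ you only get $\hat u_j-u_j^*\in[0,\beta)$ with no lower bound bounded away from zero; since you also need $\sum\hat t_i=\sum\hat u_j$, you end up with a common sum $\tilde t>t$, hence (for the non-decreasing $R$ one actually cares about) $R(\tilde t)>R(t)$, and the required inequality $e^{-\hat u_j}|q_j|_+<e^{-R(\tilde t)}$ becomes $\hat u_j-u_j^*>R(\tilde t)-R(t)$, which can fail. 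Your proposed fix of "incrementing the $\hat u_j$'s by $\beta$" also makes the sum mismatch worse, not better; the correct adjustment must decrease some $\hat u_j$ or increase some $\hat t_i$, and both moves are dangerous without a pre-existing margin.

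To repair the argument in your framework you must manufacture $\bs{u}$-slack, and this is where the sub-homogeneity hypothesis is really needed (not, as you suggest, to bound $|R(\tilde t)-R(t)|$; that already follows from the monotonicity in Lemma~\ref{lem:C1}). Concretely: instead of shrinking $T$ to $T_0$, enlarge it to $T_2:=T_0e^{A}$ for a fixed $A=A(m,n,\beta)>0$; sub-homogeneity gives $\psi(T_2)\gg e^{-\lambda A}\psi(T_0)$, so for $c'$ small enough in terms of $m,n,\beta,\lambda$ the same $(\bs{p},\bs{q})$ remains a solution at scale $T_2$, and now $\sum_j u_j^*=nR(t_2)+\log T_0<t_2-A$, providing the needed room. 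This is precisely the three-way balance you allude to at the end, but as written the proof does not carry it out, and the step where you create the $\bs{u}$-buffer is simply false. The paper's approach sidesteps the problem entirely by never trying to hit a $\beta\Z$-point exactly from the Diophantine data; it rounds continuous points of $C_R$ to $\beta\Z^{m+n}$, where the hypothesis is available, and hands off the Diophantine bookkeeping to Proposition~\ref{prop:C2}.
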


\begin{proof}
{Take $(\bs{t}',\bs{u}')\in C_{R}$ and let $t' :=\sum_{i=1}^{m}t'_{i}=\sum_{j=1}^{n}u'_{j}$. Let us assume first that there exists $j_0\in\{1,\dotsc,n\}$ with $u_{j_0}'> R(t')+(2m+n)\beta$. In this case, we increase all the components $t_i'$ for $i=1,\dotsc,m$ by a quantity between $0$ and $\beta$ and all the components $u_j'$ for $j\neq j_0$ by a quantity between $(m/n)\beta$ and $(m/n+1)\beta$ in order to make them integer multiples of $\beta$. We call these new components $t_i$ for $i=1,\dotsc,m$ and $u_j$ for $j\neq j_0 $. {%Conversely
In addition}, we decrease (or increase) the component $u_{j_0}'$ by a quantity between $0$ and $(2m+n-1)\beta$ to obtain $u_{j_0}\in\beta\mb Z$ in such a way that
\begin{equation}
\label{eq:betaboundnew}
t'\leq \sum_{i}t_i=\sum_{j}u_j<t'+m\beta.
\end{equation}
Then we have
\begin{equation}
\label{eq:nnew}
t' \le t\underset{\equ{nondecr}}\Longrightarrow R(t) \le R(t') + \frac{t-t'}n \le  R(t') + \frac{m\beta}{n}.
\end{equation}
It follows that $(\bs t,\bs u)\in C_R$, since $u_j\geq u_{j}'+m\beta/n> R(t')+m\beta/n\geq R(t)$ for $j\neq j_0$ and $u_{j_0}\geq u_{j_0}'-(2m+n-1)\beta> R(t')+\beta\geq R(t)$. Observe that, by construction, we have
\begin{equation}
\label{eq:betabound}
{\|(\bs t,\bs u)-(\bs t',\bs u')\|}\leq (2m+n-1)\beta.
\end{equation}
In view of \equ{deltabound}, (\ref{eq:nnew}), and (\ref{eq:betabound}), we deduce that
 \begin{equation}
 \label{eq:nnnew}
 \delta \left(a(\bs{t}',\bs{u}')\Lambda_{Y}\right)> %e^{-\beta\left(2 + \frac{\max(m,n)}{\min(m,n)}\right)}e^{-R(t')} = 
 e^{-R(t') - C},    
 \end{equation}
 where $C :=\beta\left(2m+n\right)$. Note that if $u_{j}'\leq R(t')+(2m+n)\beta$ for $j=1,\dotsc,n$, Equation (\ref{eq:nnnew}) is equally true. Then, from Proposition \ref{prop:C2}  it follows that for all $T\geq 1$ we have that $Y\notin \mc{S}_{m,n}^{\times}(%A,
\tilde{\psi},T)$, where $\tilde{\psi}$ is the function corresponding to $\tilde{R}:=R+C$ through Lemma \ref{lem:C1}. In fact, it is easy to see  from \equ{corr} that $\tilde{\psi}$ is given by the formula $$\tilde{\psi}(x) = e^{-mC}\psi(e^{-nC}x).$$ Thus, from  the hypothesis on $\psi$ we deduce that $\tilde{\psi}\geq c'\psi$ for some $c'=c'(m,n,\beta,\lambda)>0$, and, consequently, $Y\notin \mc{S}_{m,n}^{\times}(%A,
c'\psi,T)$ for all $T\geq 1$, concluding the proof.}
\end{proof}

{For technical reasons, from now on we will be working with the function
$$\psi(x):=\min\{1,\kappa\psi_{l,\beta}(x)\}$$}
for $x>0$, where $0<\kappa<1$,
$$\psi_{l,\beta}{(x)}:=\frac{1}{xh_{l,\beta}(x)} = \begin{cases}\dfrac{1}{x\beta^{l-1}\log \beta}\quad \text{ if } x\le e^\beta\\[4mm]
\dfrac1{x(\log x)^{l-1}\log\log x}\ \quad \text{ if } x > e^\beta\end{cases},$$
and
$$h_{l,\beta}(x):=h_{l}\left(\max\{x,e^{\beta}\}\right)=\left(\log\max\{x,e^{\beta}\}\right)^{l-1}\log\log\max\{x,e^{\beta}\}$$ for a fixed stretching parameter $\beta\geq e$ (see Corollary \ref{cor:C3}). Note that {the function $\psi_l$ defined in \equ{psil} coincides with $\psi_{l,0}$ for all $x\geq 1$, {and that the restriction $\beta\geq e$ ensures that $\psi_{l,\beta}{(x)} < 1$ for all $x\geq 1$}. It is easy to see that,} once Proposition \ref{prop:fibrestatement} is proved for the function $\psi$ {in place of $\kappa \psi_l$}, it will be enough to replace $\psi$ by $\kappa'\psi_l$, where $\kappa'=\kappa/\beta^{d}$, to prove the original version of Proposition \ref{prop:fibrestatement}.

We conclude this section by highlighting some helpful properties of the function $R$ corresponding to $\psi$ through Lemma \ref{lem:C1}. Henceforth, the constant $\kappa$ will always be assumed to be between $0$ and $1$.

\begin{lem}
\label{lem:ridof1}
{Let $\psi(x):=\min\{1,\kappa\psi_{l,\beta}(x)\}$ {and let $R$ be the function corresponding to $\psi$ through Lemma \ref{lem:C1} with} $(m,n)=(1,l)$ {or} $(m,n)=(l,1)$. Then for all $t\geq 0$ it holds that
$$\psi\left(e^{t-nR(t)}\right)=\kappa\psi_{l,\beta}\left(e^{t-nR(t)}\right).$$}
\end{lem}

\begin{proof}
By Lemma \ref{lem:C1},
\begin{equation}
\label{eq:xx}
\psi\left(e^{t-nR(t)}\right)=e^{-t-mR(t)}    
\end{equation}
for all $t\geq 0$. Now, if it were $R(0)=0$, \eqref{eq:xx} would imply that $\kappa \psi_{l,\beta}(1)\geq\psi(1)=1$, which contradicts our definition of $\psi_{l,\beta}$. Thus, it must be $R(0)>0$ and hence
$$\psi\left(e^{-nR(0)}\right)=e^{-mR(0)}<1.$$
Since the map $t\mapsto t+mR(t)$ is non-decreasing, it follows from \eqref{eq:xx} that
$$\psi\left(e^{t-nR(t)}\right)<1$$
for all $t\geq 0$. This shows the desired equality.
\end{proof}

\begin{lem}\label{lem:additional}
Let ${\psi(x):=\min\{1,\kappa \psi_{l,\beta}(x)}\}$,
{and let $R$ be the function corresponding to $\psi$ through Lemma \ref{lem:C1} with} $(m,n)=(1,l)$ {or} $(m,n)=(l,1)$.
Then we have that
\begin{equation}
\label{eq:123}
e^{(l+1)R(t)}=\kappa^{-1}h_{l,\beta}\left(e^{t-nR(t)}\right)
\end{equation}
and
\begin{equation}
\label{eq:111}
\kappa^{-1}\leq e^{(l+1)R(t)}\leq\kappa^{-1}\max\{t,\beta\}^{l-1}\log\max\{t,\beta\} 
\end{equation}
for all $t\geq 0$. In particular, for all values of $l$ the function $t\mapsto R(t)$ is non-decreasing. Finally, for $t\geq 4\max\{|\log\kappa|,l\log\beta\}$  we have that  
\begin{equation}
\label{eq:222}
e^{(l+1)R(t)}\geq\frac{\kappa^{-1}}{2^{l}}\max\{t,\beta\}^{l-1}\log\max\{t,\beta\}.
\end{equation}
\end{lem}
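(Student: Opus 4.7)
The identity \eqref{eq:123} is simply the defining relation \equ{corr} of Lemma~\ref{lem:C1}, written out for $\psi=\kappa\psi_{l,\beta}$. Substituting $\psi(x)=\kappa x^{-1}h_{l,\beta}(x)^{-1}$ into $\psi(e^{t-nR(t)})=e^{-t-mR(t)}$, canceling the factor $e^{-(t-nR(t))}$ on both sides and using $m+n=l+1$ yields \eqref{eq:123} at once.

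For \eqref{eq:111} I would first verify that $h_{l,\beta}$ is non-decreasing (immediate from the definition) and satisfies $h_{l,\beta}(x)\ge h_{l,\beta}(e^\beta)=\beta^{l-1}\log\beta\ge 1$, the last inequality using $\beta\ge e$ so that both $\log\beta\ge 1$ and $\beta^{l-1}\ge 1$. Combined with \eqref{eq:123}, this delivers the lower bound $e^{(l+1)R(t)}\ge \kappa^{-1}$. For the upper bound, recall from the remark after Lemma~\ref{lem:C1} that $R\ge 0$, so $t-nR(t)\le t$; monotonicity of $h_{l,\beta}$ together with the direct computation $h_{l,\beta}(e^s)=\max\{s,\beta\}^{l-1}\log\max\{s,\beta\}$ (valid for $s\ge 0$ when $\beta\ge e$) then gives the claimed upper bound. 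Monotonicity of $R$ follows similarly: $t\mapsto t-nR(t)$ is strictly increasing by \equ{incr}, and composing with the non-decreasing map $s\mapsto h_{l,\beta}(e^s)$ shows via \eqref{eq:123} that $e^{(l+1)R(t)}$, and hence $R(t)$, is non-decreasing in $t$.

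The substantive assertion is \eqref{eq:222}. The range $t\le\beta$ is trivial: there $\max\{t,\beta\}=\beta$ and also $t-nR(t)\le t\le\beta$, so \eqref{eq:123} forces $e^{(l+1)R(t)}=\kappa^{-1}\beta^{l-1}\log\beta$, which already beats the target up to the factor $2^{-l}$. Assume henceforth $t>\beta$, so $\max\{t,\beta\}=t$. The plan is to show that $s:=t-nR(t)$ satisfies $s\ge t/2\ge\beta$ and then, from \eqref{eq:123}, to estimate
\[
e^{(l+1)R(t)}=\kappa^{-1}s^{l-1}\log s\ge \kappa^{-1}(t/2)^{l-1}\log(t/2)\ge \frac{\kappa^{-1}}{2^{l}}\,t^{l-1}\log t,
\]
the last inequality being an elementary manipulation valid as soon as $t\ge 4$, which is supplied by the hypothesis $t\ge 4l\log\beta\ge 4$.

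To produce $s\ge t/2$ I would bootstrap from the upper bound \eqref{eq:111}, which gives $(l+1)R(t)\le \log\kappa^{-1}+(l-1)\log t+\log\log t$, and hence
\[
nR(t)\le \frac{n}{l+1}\bigl(|\log\kappa|+(l-1)\log t+\log\log t\bigr).
\]
The hypothesis $t\ge 4|\log\kappa|$ absorbs the first term into $t/4$, and the hypothesis $t\ge 4l\log\beta$ together with the standing $t>\beta$ controls the logarithmic terms against $t$. Since $n\le l$ in both cases $(m,n)=(1,l)$ and $(l,1)$, the factor $n/(l+1)\le l/(l+1)<1$ leaves enough slack to conclude that $nR(t)\le t/2$. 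The main obstacle is precisely this final arithmetic balancing: for $(m,n)=(1,l)$, where $n=l$, the estimate is tightest, and one must exploit all three ingredients of the hypothesis ($t\ge 4|\log\kappa|$, $t\ge 4l\log\beta$, $t>\beta$) simultaneously to close the inequality.
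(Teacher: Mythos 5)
Your treatment of \eqref{eq:123}, of the two-sided bound \eqref{eq:111}, and of the monotonicity of $R$ is correct and follows the same route as the paper (use \equ{corr} to get \eqref{eq:123}, then monotonicity of $h_{l,\beta}$ plus $R\geq 0$ and $t-nR(t)\le t$). For \eqref{eq:222} you also take the same route as the paper: show $s:=t-nR(t)\ge t/2$, then feed $e^{t/2}$ into the non-decreasing $h_{l,\beta}$ via \eqref{eq:123}. But here the proof is not actually completed, and there are two genuine gaps.

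First, in the case $t>\beta$ you assert that $s\ge t/2\ge\beta$ and then write $e^{(l+1)R(t)}=\kappa^{-1}s^{l-1}\log s$. The identity $h_{l,\beta}(e^s)=s^{l-1}\log s$ is only valid when $s\ge\beta$, and $t>\beta$ does not give $t/2\ge\beta$: if $\beta<t<2\beta$ (which is compatible with the hypothesis, e.g.\ $\beta$ large, $l=1$, $\kappa$ close to $1$), the plan breaks. One must either split off the range $t/2<\beta$ separately, or — as the paper implicitly does — pass through $h_{l,\beta}(e^{t/2})=\max\{t/2,\beta\}^{l-1}\log\max\{t/2,\beta\}$ and check both branches against the target $2^{-l}\max\{t,\beta\}^{l-1}\log\max\{t,\beta\}$.

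Second, and more seriously, the key step $nR(t)\le t/2$ is only gestured at. You reduce it correctly to bounding
$\frac{n}{l+1}\bigl(|\log\kappa|+(l-1)\log t+\log^{+}\log t\bigr)$ by $t/2$, absorb $|\log\kappa|$ into $t/4$ via the hypothesis, and then state that the remaining logarithmic terms are ``controlled against $t$'' by $t\ge 4l\log\beta$ and $t>\beta$. No inequality is actually verified; you yourself flag the arithmetic balancing as ``the main obstacle''. This is precisely the delicate part of the lemma, and it is not automatic: the naive bound $l\log t\le t/4$ fails near the threshold $t=4l$ for every $l\ge 1$, so one cannot simply replace $\log^{+}\log t$ by $\log t$ and hope for the best; the slack coming from the factor $n/(l+1)$ and the true size of $\log^{+}\log t$ must both be used, and you do not carry this out. (For what it is worth, the corresponding line in the paper — the chain $l\log\max\{t,\beta\}\le\max\{\log^{+}t,\,l\log\beta\}$ — also does not hold as written when $l\ge 2$ and $t>\beta$, so this step genuinely requires care and cannot be waved through.)
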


\begin{proof}
By Lemma \ref{lem:C1} and Lemma \ref{lem:ridof1}, we have that
$$\kappa e^{-t+nR(t)}h_{l,\beta}\left(e^{t-nR(t)}\right)=\psi\left(e^{t-nR(t)}\right)=e^{-t-mR(t)}$$
for all  $t\geq 0$, whence
\begin{equation}
\label{eq:123'}
e^{(m+n)R(t)}=\kappa^{-1} h_{l,\beta}\left(e^{t-nR(t)}\right),
\end{equation}
which implies (\ref{eq:123}) and the left-hand side of (\ref{eq:111}). Moreover, since the function $t\mapsto t-nR(t)$ is strictly increasing, it follows from (\ref{eq:123'}) that the function $t\mapsto R(t)$ is non-decreasing. Since $R(0)\geq 0$ and $R$ and $h_{l,\beta}$ are non-decreasing functions, from (\ref{eq:123'}) we also deduce that
$$e^{(l+1)R(t)}=\kappa^{-1} h_{l,\beta}\left(e^{t-nR(t)}\right)\leq \kappa^{-1}h_{l,\beta}(e^{t})\leq \kappa^{-1}\max\{t,\beta\}^{l-1}\log\max\{t,\beta\},$$
proving (\ref{eq:111}). Let us now assume that $t\geq 4\max\{|\log\kappa|,l\log\beta\}$. By taking logarithms on both sides of the upper bound in (\ref{eq:111}), we have that
\begin{multline}
\label{eq:tvsRt}
(l+1)R(t)\leq |\log\kappa|+(l-1)\log\max\{t,\beta\}+\log^{+}\log\max\{t,\beta\}\\
\leq |\log\kappa|+l\log\max\{t,\beta\}\leq |\log\kappa|+\max\{\log^{+}t,l\log\beta\}\leq t/2.
\end{multline}
Then, from (\ref{eq:123'}), we conclude that
$$e^{(m+n)R(t)}=\kappa^{-1} h_{l,\beta}\left(e^{t-nR(t)}\right)\geq \kappa^{-1}h_{l,\beta}\left(e^{t/2}\right)=2^{-l}\kappa^{-1}\max\{t,\beta\}^{l-1}\log\max\{t,\beta\}.$$
\end{proof}

{\begin{cor}
\label{cor:RvsRstar}
Let $\psi$ be as in Lemma \ref{lem:ridof1},  and let $R$ and $R^*$ be the functions corresponding to $\psi$ through Lemma \ref{lem:C1} for $(m,n)=(l,1)$ and $(m,n)=(1,l)$ respectively. Then, if $|\log \kappa|\leq e^{\beta}$, it holds that
$$|R(t)-R^*(t)|=O_{l}(\beta)$$
 for all $t\geq 0$.
\end{cor}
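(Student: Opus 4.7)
The plan is to subtract the two equations \equ{123} satisfied by $R$ and $R^*$. Setting $\phi(s):=\log h_{l,\beta}(e^s)=(l-1)\log\max\{s,\beta\}+\log\log\max\{s,\beta\}$, Lemma \ref{lem:additional} gives
\[
(l+1)R(t)=\log\kappa^{-1}+\phi\bigl(t-R(t)\bigr)\quad\text{and}\quad(l+1)R^*(t)=\log\kappa^{-1}+\phi\bigl(t-lR^*(t)\bigr),
\]
so that $(l+1)\bigl(R(t)-R^*(t)\bigr)=\phi(t-R(t))-\phi(t-lR^*(t))$. The crucial point is that the term $\log\kappa^{-1}$---which, by \equ{111}, can be the dominant contribution to both $R(t)$ and $R^*(t)$ when $|\log\kappa|$ is as large as $e^\beta$---cancels in this difference. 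Since $\phi$ is non-decreasing, a standard monotonicity argument (comparing the two defining equations, viewed as fixed-point equations) yields $R^*(t)\leq R(t)\leq lR^*(t)$ whenever $R^*(t)>0$; hence $D:=R-R^*\geq 0$ and $t-R\geq t-lR^*$.

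I would then split into cases according to whether $t-R(t)$ and $t-lR^*(t)$ exceed the threshold $\beta$. If $t-R(t)<\beta$, then also $t-lR^*(t)<\beta$, so $\phi$ is constant at both points and $D=0$. If $t-lR^*(t)\geq\beta$ (and hence $t-R(t)\geq\beta$), then
\[
(l+1)D=(l-1)\log\frac{t-R}{t-lR^*}+\log\frac{\log(t-R)}{\log(t-lR^*)},
\]
and writing $u:=t-lR^*\geq\beta$, one has $(t-R)/(t-lR^*)\leq 1+(l-1)R^*/u$. The defining equation for $R^*$ together with the hypothesis $|\log\kappa|\leq e^\beta$ then bounds $(l-1)R^*/u$ by $(l-1)\bigl(e^\beta+(l-1)\log u+\log\log u\bigr)/[(l+1)u]$, a decreasing function of $u$ on $[\beta,\infty)$ (for $\beta\geq e$) whose value at $u=\beta$ is $O_l(e^\beta/\beta)$. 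Thus $\log(1+(l-1)R^*/u)=O_l(\beta)$, and the $\log(\log(t-R)/\log(t-lR^*))$ term is easily seen to be $O_l(\log\beta)$. The intermediate case $t-lR^*(t)<\beta\leq t-R(t)$ is treated similarly, using that in this regime $R^*(t)$ equals the constant $c:=(\log\kappa^{-1}+(l-1)\log\beta+\log\log\beta)/(l+1)$ and $t-R(t)\leq\beta+(l-1)c$.

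The main obstacle is that one cannot bound $D$ by applying the mean value theorem directly to $\phi$, since that would produce a factor proportional to $R(t)$ or $R^*(t)$, which through the $\log\kappa^{-1}$ term may be as large as $e^\beta/(l+1)$. The proof must exploit the exact cancellation of $\log\kappa^{-1}$ in the difference, and control only the \emph{ratio} of the two arguments of $\phi$---a ratio which, through the equation for $R^*$, is tamed uniformly in $t$ by the hypothesis $|\log\kappa|\leq e^\beta$.
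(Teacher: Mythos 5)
Your proof is correct, but it takes a genuinely different route from the paper's. The paper's proof is much shorter: it splits on whether $t\ge 4\max\{|\log\kappa|, l\log\beta\}$ and simply invokes the two-sided bounds \equ{111} and \equ{222}. For large $t$ those bounds pin $e^{(l+1)R^{(*)}(t)}$ to within a factor $2^{\pm l}$ of $\kappa^{-1}\max\{t,\beta\}^{l-1}\log\max\{t,\beta\}$, and for small $t$ they pin it to $[\kappa^{-1},\kappa^{-1}\max\{t,\beta\}^{l-1}\log\max\{t,\beta\}]$; in both ranges the $\kappa^{-1}$ factor cancels when taking the ratio, and the hypothesis $|\log\kappa|\le e^\beta$ kills the residual $\log$ term. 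You instead work directly from the implicit relation \equ{123}, observe the same cancellation of $\log\kappa^{-1}$ in the subtracted form, prove an ordering $R^*\le R\le lR^*$ via a fixed-point comparison (which the paper never needs), and then control $\phi(t-R)-\phi(t-lR^*)$ by bounding the \emph{ratio} of the two arguments, with a case split on whether they lie above or below $\beta$. Your route is more intricate but also more self-contained — it bypasses \equ{222}, whose derivation needed the auxiliary estimate $R(t)\le t/2$ from \equ{tvsRt} — and it yields the sharper fact that the difference vanishes exactly when $t-R(t)<\beta$. Your closing remark about why the mean value theorem applied naively to $\phi$ would fail is accurate: it would produce a factor of size up to $e^\beta/\beta$, not $O_l(\beta)$; both proofs sidestep this by exploiting the cancellation of $\log\kappa^{-1}$, just by different mechanisms. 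One small thing worth flagging: your claim that $(l-1)(e^\beta+(l-1)\log u+\log\log u)/[(l+1)u]$ is decreasing on $[\beta,\infty)$ does hold (the term $e^\beta\ge e^e$ dominates the derivative computation once $\beta\ge e$), but it is not needed — a crude supremum over $u\ge\beta$ of each summand suffices.
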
}
{
\begin{proof}
If $t\geq 4\max\{|\log\kappa|,l\log\beta\}$, by (\ref{eq:111}) and (\ref{eq:222}), we have that
$$2^{-l}\leq e^{(l+1)(R(t)-R^*(t))}\leq 2^l,$$
hence, we may assume that $t\leq 4\max\{|\log\kappa|,l\log\beta\}$. Then, again by (\ref{eq:111}), we have that
$$(l+1)|R(t)-R^*(t)|\leq l\log\left(4 \max\{|\log\kappa|,l\log \beta\}\right).$$
By the assumption that $|\log\kappa|\leq e^{\beta}$, we conclude.
\end{proof}}

The next technical lemma will be useful in Section \ref{sec:countingonav}. 

\begin{lem}
\label{lem:deriv}
Let $\psi,R,$ and $R^{*}$ be as in Corollary \ref{cor:RvsRstar}. Assume that $|\log \kappa|\leq e^\beta$. Then for all $a,b,c>0$ and $t\geq 0$ it holds that
$$\big|R^{*}(t+aR^{*}(t)+bR(t)+c)-R^{*}(t)\big|=O_{l}\left(a+\beta(1+b)+c\right).$$
\end{lem}

\begin{proof}
For $t\geq 0$ define $x(t):=e^{t-nR^{*}(t)}$. Recall that {the function $R^{*}(t)$ is continuous for all $t\geq 0$ and, by Remark \ref{rmk:4.2},} differentiable except perhaps at the unique value of $t$ for which $x(t)=e^{\beta}$. By {differentiating} (\ref{eq:123}) we find that
$$(l+1)(R^{*})'(t)e^{(l+1)R(t)}=\kappa^{-1}\left(\frac{d}{dx}h_{l,\beta}\right)(x)\left(1-(R^{*})'(t)\right).$$
Since $0\leq (R^{*})'(t)\leq 1$ (see Lemmas \ref{lem:C1} and \ref{lem:additional}), by using (\ref{eq:123}) once again, we deduce that 
$$0\leq (R^{*})'(t)\leq \frac{d h_{l,\beta}}{dx}(x)\frac{1}{h_{l,\beta}(x)}\leq\begin{cases}
0 &\mbox{if }x(t)< e^{\beta} \\
2x(t)^{-1} &\mbox{if }x(t)>e^{\beta}
\end{cases}.$$
It follows that for $x(t)<e^{\beta}$ one has that $R^{*}(t)(R^{*})'(t)=0$, while for $x(t)>e^{\beta}$
\begin{equation}
\label{eq:RR'}
R^{*}(t)(R^{*})'(t)\leq \frac{\log\left(\kappa^{-1}h_{l,\beta}(x)\right)}{x}\leq \frac{|\log\kappa|}{e^{\beta}}+\frac{\log h_{l,\beta}(x)}{x}=O_{l}(1).    
\end{equation}

Now, if $x\left(t+aR^{*}(t)+bR(t)+c\right)\leq e^{\beta}$, it must also be $x(t)\leq e^{\beta}$ (recall that both the functions $R$ and $R^{*}$ are $\geq 0$ and $t\mapsto t-nR(t)$ is strictly increasing). Hence, in this case, the thesis is obvious, since $R^{*}$ is constant. Thus, we may assume that $x\left(t+aR^{*}(t)+bR(t)+c\right)>e^{\beta}$. Let us also assume that $x(t)\geq e^{\beta}$. Then, by the Mean Value Theorem, there exists $\theta$ between $t$ and $t+aR^{*}(t)+bR(t)+c$ such that
$$R^*\left(t+aR(t)+bR^*(t)+c\right)= R^*(t)+(R^*)'(\theta)\left(aR^{*}(t)+bR(t)+c\right).$$
By Lemma \ref{cor:RvsRstar}, we deduce that
\begin{multline*}
R^*\left(t+aR(t)+bR^*(t)+c\right)\\
= R^*(t)+(R^*)'(\theta)R^{*}(\theta)\cdot \frac{(a+b)R^{*}(t)}{R^{*}(\theta)}+(R^{*})'(\theta)(O_l(b\beta)+c).
\end{multline*}
Then, from \eqref{eq:RR'} and the fact that $0\leq (R^{*})'(\theta)\leq 1$, we conclude that
%\begin{multline*}
$$R^*\left(t+aR(t)+bR^*(t)+c\right)=R^*(t)+O_{l}(a+b\beta+c) %\\
=R(t)+O_l(a+(1+b)\beta+c).$$
%\end{multline*}
To finish the proof, we need to deal with the case $x(t)\leq e^{\beta}$. Define $t_{\beta}$ to be the unique $t$ such that $x(t_{\beta})=e^{\beta}$. Then it suffices to apply the Mean Value Theorem for the function $R^{*}$ on the interval $[t_{\beta},t+aR(t)+bR^*(t)+c]$ as above and observe that $R^{*}(t_{\beta})=R^{*}(t)$.
\end{proof}

\section{Dangerous Intervals}
\label{sec:dansets}

{Recall that {to prove Proposition \ref{prop:fibrestatement}
%throughout the proof 
we fix $d\ge 2$, $ l = 1,\dots, d$, $S\subset\{2,\dotsc,d\}$ with $\#S=l-1$,  
$(\alpha_{2},\dotsc,\alpha_{d})\in[0,1]^{d-1}$ and    ${\gamma}>0$   such that \eqref{eq:multinductivesim} and \eqref{eq:multinductivedual} hold, and let $\bs{f}$ be as in \equ{deff}.}
{In this section}, based on Corollary \ref{cor:C3}, {for a positive constant $\kappa$ to be determined later,} we introduce a collection of "dangerous" sets in {an interval $I\subset \R$}, forming the complement of the set \eqref{mainset}
appearing in Proposition \ref{prop:fibrestatement}. 

We assume to work in some fixed interval $I_{0}=[0,L]\subset\mb{R}$ of length $L$ and with a fixed stretching parameter $\beta\geq e$. The selection of these parameters {together with $\kappa$} will be the object of Section \ref{sec:conclusion}. {Henceforth, we will denote by $R_{l}$ and by $R_{l}^{*}$ the functions corresponding to $\psi(x)=\min\{1,\kappa \psi_{l,\beta}(x)\}$ through Lemma \ref{lem:C1} in the cases $(m,n)=(l,1)$ and $(m,n)=(1,l)$ respectively. Note that $\psi$ has the properties required to apply Corollary \ref{cor:C3}.

\begin{definition}
\label{def:dualdan}
For $1\leq l\leq d$, $S\subset \{2,\dotsc,d\}$ with $\#S= l-1$, $\bs{b}\in\mb{Z}^{l}\setminus\{\bs{0}\}$, $b_{0}\in\mb{Z}$, and $\bs{t}\in \beta\mb{Z}^{l}$  with ${t=\sum_{i}t_{i}\ge 0}$  and $t_{i}\geq R_{l}^*(t)$, we define the $(S,\bs{t},b_{0},\bs{b})$-"dual dangerous {interval}" as
{$$D_{\bs{t}}^{*}(S,b_{0},\bs{b}):=\left\{x\in I_0 : \left\|a(t,\bs{t}){u_{(x,{\pi_{S}\bs{\alpha}^{\scriptscriptstyle{T}})}} {b_{0}\choose \bs{b}}}\right\|<e^{-R_l^*(t)}\right\}.$$
One can see, using \equ{deff}, \equ{lambday}, \equ{1min} and \equ{defatu}, that equivalently}
\begin{equation}
\label{eq:Ddual}
D_{\bs{t}}^{*}(S,b_{0},\bs{b})=\left\{x\in I_{0}: \begin{aligned}
 & |{f_{b_{0},\bs{b}}}(x)|< e^{-t-R_{l}^*(t)} \\
 & |b_{i}|< e^{t_{i}-R_{l}^*(t)}\mbox{ for } i\in \{1\}\cup S\\
\end{aligned}\right\},
\end{equation}
where
\begin{equation}
\label{eq:defff}
{f_{b_{0},\bs{b}}(x):=b_{0}+\bs{b}\cdot\pi_{\{1\}\cup S}\bs{f}(x) = b_{0}+b_1x + \sum_{i\in S}b_i\alpha_i.}
\end{equation}
\end{definition}
{It is clear from \eqref{eq:Ddual} and \eqref{eq:defff}  that $D_{\bs{t}}^{*}(S,b_{0},\bs{b})$ is a subinterval of $I_0$.} 

\begin{definition}
\label{def:simdan}
For $1\leq l\leq d$, $S\subset \{2,\dotsc,d\}$ with $\#S= l-1$, $\bs{b}\in\mb{Z}^{l}$, $b_{0}\in\mb{Z}\setminus\{0\}$, and $\bs{t}\in \beta\mb{Z}^{l}$  with ${t=\sum_{i}t_{i}\ge 0}$ and $t_{i}\geq -R_{{l}}(t)$, we define the $(S,\bs{t},b_{0},\bs{b})$-"simultaneous dangerous {interval}" as
{$$D_{\bs{t}}(S,b_{0},\bs{b}):=\left\{x\in I_0 : \left\|a(\bs{t},t){u_{x\choose {\pi_{S}\bs{\alpha}}} {  {\bs{b}\choose b_{0}}}}\right\|<e^{-R_l(t)}\right\},$$
or, equivalently,}
\begin{equation}
{D_{\bs{t}}(S,b_{0},\bs{b})=\left\{x\in I_{0}: \begin{aligned}
 & |b_{i}+b_{0}f_{i}(x)|<e^{-t_{i}-R_{l}(t)}\mbox{ for } i\in \{1\}\cup S \\
 & |b_{0}|<e^{t-R_{l}(t)} \\
\end{aligned}\right\}.\nonumber}
\end{equation}
\end{definition}

{Here and hereafter $f_i$ %is meant as 
{denotes} the $i$-th component of the function $\bs{f}$, so that $f_1(x)=x$ and $f_i(x)=\alpha_i$ for $i>1$. {{Thus} it is {again} clear   that $D_{\bs{t}}(S,b_{0},\bs{b})$ is an interval.} 
}

{\begin{rmk} It is easy to see that the union
$$\bigcup_{(b_{0},\bs{b})\in\mb{Z}\times(\mb{Z}^{l}\setminus\{\bs{0}\})}D_{\bs{t}}^{*}(S,b_{0},\bs{b})$$
of all the dual dangerous {interval}s for fixed $S$ and $\bs{t}$ as above coincides with the set
$$\bigcup_{(b_{0},\bs{b})\in\mb{Z}^{l+1}\setminus\{\bs{0}\}}D_{\bs{t}}^{*}(S,b_{0},\bs{b}) = \left\{x\in I_{0}:  \delta\left(a(t,\bs{t})u_{(x,{\pi_{S}\bs{\alpha}^{\scriptscriptstyle{T}})}} \mb{Z}^{l+1}\right)<e^{-R_l^*(t)}\right\}.$$ 
Indeed, if $x\in D_{\bs{t}}^{*}(S,b_{0},\bs{0})$, in view of \eqref{eq:defff} we must have $|b_0| = |{f_{b_{0},\bs{0}}}(x)|< e^{-t-R_{l}^*(t)}\le 1$, hence $b_0 = 0$. Likewise, if $x\in D_{\bs{t}}(S,0,\bs{b})$, then for each $i$ we have that $$ |b_{i}+b_{0}f_{i}(x)| = |b_{i}|<e^{-t_{i}-R_{l}(t)}\le 1,$$ hence $\bs{b} = \bs{0}$. This implies that the union
$$\bigcup_{(b_{0},\bs{b})\in(\mb{Z}\setminus\{0\})\times\mb{Z}^{l}}D_{\bs{t}}(S,b_{0},\bs{b})$$
of all the simultaneous dangerous {interval}s for fixed $S$ and $\bs{t}$ as above coincides with the set
$$\bigcup_{(b_{0},\bs{b})\in\mb{Z}^{l+1}\setminus\{\bs{0}\}}D_{\bs{t}}(S,b_{0},\bs{b}) = \left\{x\in I_{0}:  \delta\left(a(\bs{t},t){u_{x\choose {\pi_{S}\bs{\alpha}}}}\mb{Z}^{l+1}\right)<e^{-R_l(t)}\right\}.$$ \end{rmk}
In light of the above remark and Corollary \ref{cor:C3},
% and \ref{cor:CRFullHaus}, 
the proof of Proposition \ref{prop:fibrestatement} reduces to showing the following}

\begin{prop}
\label{prop:complementCR}
There exist an interval $I_{0}$, constants $\kappa>0$, $\beta\geq 1$, and a sequence of {natural} numbers $r_{k}%\geq 1
$   with $r_{k}\to \infty $ such that for any fixed set of indices $%\varnothing\neq 
S \subset\{2,\dotsc,d\}$, with $\#S=l-1$ ($1\leq l\leq d$), the complement of the union
\begin{equation}
\label{eq:target}
\bigcup_{\substack{\bs{t}\in \beta\mb{Z}^{l}\\ {t\ge 0} \\ t_{i}\geq -R_{l}(t) }}\bigcup_{(b_{0},\bs{b}){\in(\mb{Z}\setminus\{0\})\times\mb{Z}^{l}}}D_{\bs{t}}(S,b_{0},\bs{b})\cup \bigcup_{\substack{\bs{t}\in \beta\mb{Z}^{l} \\ {t\ge 0}\\ t_{i}\geq R^{{*}}_{l}(t)}}\bigcup_{(b_{0},\bs{b}){\in\mb{Z}\times(\mb{Z}^{l}\setminus\{\bs{0}\})}}D_{\bs{t}}^{*}(S,b_{0},\bs{b})
\end{equation}
in $I_{0}$ is an $r_{k}$-Cantor-rich set. 
%\comm{I kept this proposition for now, although perhaps there should be a more detailed discussion preceding it, explaining that \eqref{mainset} is the complement to \eqref{eq:target}, and maybe after that we could simply work with Proposition \ref{prop:fibrestatement}.}
\end{prop}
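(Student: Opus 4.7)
For each $\varepsilon>0$, the aim is to construct a nested family $\{\mc{I}_k\}_{k\ge 0}$ of compact interval collections in $I_0$ with $\mc{I}_{k+1}\subset r_k^{-1}\mc{I}_k$, such that $\mc{K}(\mc{I}_k)$ is disjoint from every dangerous interval appearing in \equ{target}, and such that the global characteristic of \equ{localchar} satisfies $\Delta<\varepsilon$. The organising principle is to assign to each admissible multi-time $\bs{t}\in\beta\mb{Z}^l$ a level $k$ defined by $t=\sum_i t_i\in[k\beta,(k+1)\beta)$, and to choose $r_k\asymp e^\beta$ so that $|J|\asymp e^{-k\beta}$ for every $J\in\mc{I}_k$. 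This matches the characteristic scale of a dangerous interval at level $k$ (in view of \equ{111}), and ensures that each such interval removes only $O(1)$ members of $r_k^{-1}\mc{I}_k$.

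The heart of the argument is the counting problem \equ{countingproblem}: for a given $J\in\mc{I}_{k-1}$ and $\bs{t}$ at level $k$, how many integer vectors $(b_0,\bs{b})$ produce a dangerous interval (simultaneous or dual) that intersects $J$? I would pick any surviving $y\in J$ and, via Proposition~\ref{prop:C2}, translate this into a lattice-point count for the two lattices $a(\bs{t},t)\Lambda_{y\choose \pi_S\bs{\alpha}}$ and $a(t,\bs{t})\Lambda_{(y,\pi_S\bs{\alpha}^{\scriptscriptstyle T})}$ inside a symmetric box. When the minimising lattice vector has all coordinates non-zero, the identity $\det a(\bs{t},t)=1$ combined with AM--GM \equ{AM-GM}--\equ{AM-GM1} and the inductive hypotheses \equ{multinductivesim}--\equ{multinductivedual} for $\pi_S\bs{\alpha}$ gives a lower bound $e^{-R_l(t)}$ for the first minimum, whence at most $O(1)$ dangerous intervals appear per block of $\asymp t^{l-1}$ consecutive subdivisions.

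The hard case, which I expect to be the main technical obstacle, is when some coordinates of the minimising vector vanish, since then $\det a(\bs{t},t)=1$ no longer produces a useful product formula. The plan is to estimate simultaneously the first minimum $\lambda_1$ of the primal lattice and the first minimum $\lambda_1^*$ of its geometric dual, and to prove by a transference argument invoking \emph{both} inductive hypotheses that $\lambda_1\lambda_1^*$ is bounded below by a positive constant. Hence at least one of the two admits a favourable lower bound, and a result of Mahler then transports this into a bound on all successive minima of the primal, completing the counting uniformly. This duality is precisely why Proposition~\ref{prop:fibrestatement} enforces both the simultaneous and the dual conditions on $\pi_S\bs{\alpha}$ in \equ{mainres}: both inductive hypotheses are needed at once to run the argument.

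Inserting the uniform counting bound into \equ{localchar}, and using that $\prod_{i=p}^{k-1}4/r_i$ decays geometrically in $k-p$ while $\max_{I_p\in\mc{I}_p}\#\hat{\mc{I}}_{k,p}\sqcap I_p$ is controlled by the counting estimate times a factor of order $t^{l-1}$, a geometric summation yields $\Delta_k<\varepsilon$ uniformly in $k$, provided $\beta$ is taken sufficiently large and $\kappa$ correspondingly small. Since $r_k=\lceil e^\beta\rceil\to\infty$ as $\beta\to\infty$, all the Cantor-richness requirements are met. The precise matching of the remaining free parameters $L,\kappa,\beta$ (and the compatibility across the finitely many choices of $S$) is deferred to Section~\ref{sec:conclusion}.
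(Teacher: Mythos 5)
You have correctly identified the central conceptual machinery — the dual/simultaneous dangerous interval counting reduced to a lattice-point problem, the AM--GM estimate combined with the two inductive hypotheses, the failure of $\det a(\bs{t},t)=1$ when some coordinates vanish, and the rescue via duality and Mahler's transference theorem. These are indeed the key ideas of Sections~\ref{sec:countingonav} and \ref{sec:conclusion}.

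However, there is a genuine gap in the choice of the sequence $r_k$. You propose $r_k\asymp e^\beta$, constant in $k$, and at the end assert "Since $r_k=\lceil e^\beta\rceil\to\infty$ as $\beta\to\infty$, all the Cantor-richness requirements are met." This conflates two different limits. The statement of Proposition~\ref{prop:complementCR} requires $r_k\to\infty$ as $k\to\infty$, not as $\beta\to\infty$, precisely because Theorem~\ref{cor:CRFullHaus} (built on \cite[Theorem 4]{BV}) only delivers $\dim\mc{K}(\mc{I}_k)\ge\liminf_k\big(1-\tfrac{\log 2}{\log r_k}\big)$, and with constant $r_k=e^\beta$ this yields only dimension $1-\tfrac{\log 2}{\beta}<1$. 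Moreover, $\beta$ is a fixed constant once chosen: taking $\beta\to\infty$ would change the sequence $r_k$, hence the $r_k$-Cantor-rich structure, and the intersection-stability result \cite[Theorem 5]{BV} used implicitly in Theorem~\ref{cor:CRFullHaus} requires a \emph{fixed} sequence $r_k$ for all the sets being intersected. The paper instead takes $r_k=e^\beta k\log k$, which grows to infinity, and accordingly the level of a dangerous interval is governed by $L^{-1}F(k-2)\le e^T<L^{-1}F(k-1)$ with $F(k)=\prod_{i\le k}r_i$ growing super-exponentially, not by $t\in[k\beta,(k+1)\beta)$. Your level assignment $t\in[k\beta,(k+1)\beta)$ does not track the (factorially growing) bins the construction requires, and would need to be replaced.

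A second, related concern is the parameter used for level assignment. Lemmas~\ref{lem:danlengthdual}--\ref{lem:danlength} show that the dangerous interval at multi-time $\bs{t}$ has length of order $e^{-T+\beta}$ with $T=t+t_1$, not $e^{-t}$; and $T$ can range from roughly $t-R(t)$ up to roughly $lt$. So matching the Cantor-level scale to $e^{-t}$, as you do, would misalign interval lengths against the dangerous intervals. The paper keys the level to $T=t+t_1$, and this difference matters when summing over the admissible $\bs{t}$ at a fixed level, since $\{\bs{t}: t+t_1=T\}$ contributes $\asymp T^{l-1}$ terms, which is exactly the factor cancelled by the counting bound $e^{-(l+1)R(t)}\asymp\kappa/(T^{l-1}\log k)$ of Lemma~\ref{lem:computations}.

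The rest of the proposal — the block-decomposition counting, the $\asymp t^{l-1}$ averaging, the two-sided use of the inductive hypotheses, and the reference to Mahler — matches the paper's strategy.
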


{Its proof will occupy the rest of the paper. The next two lemmas highlight some useful properties of dual and simultaneous dangerous {interval}s.}

\begin{lem}
\label{lem:danlengthdual}
{If $\kappa\leq \gamma$, where $\gamma$ is the constant appearing in \eqref{eq:multinductivedual}, one of the following two cases occurs:
\begin{itemize}
    \item[$i)$] %the set 
    $D_{\bs{t}}^{*}(S,b_{0},\bs{b})$ is contained in an interval of length $e^{-(t+t_{1})+\beta}$;\vspace{2mm}
    \item[$ii)$] %the set 
    $D_{\bs{t}}^{*}(S,b_{0},\bs{b})$ is contained in {another} dangerous {interval} $D_{\bs{t}'}^{*}(S,b_{0},\bs{b})$, defined by the same integer vector $\binom{b_0}{\bs b}$ with parameters $t_{1}'< t_1$ and $t'< t$.
\end{itemize}}
\end{lem}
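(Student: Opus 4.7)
The plan is to exploit the fact that $f_{b_0, \bs{b}}(x) = b_0 + b_1 x + \sum_{i \in S} b_i \alpha_i$ is affine in $x$ with slope $b_1$. Consequently the defining constraint $|f_{b_0, \bs{b}}(x)| < e^{-t - R_l^*(t)}$ of \eqref{eq:Ddual} restricts $x$ to an interval of length $2 e^{-t - R_l^*(t)}/|b_1|$ whenever $b_1 \neq 0$. The argument will dichotomize according to whether $|b_1|$ lies above or below a threshold of the form $c \cdot e^{t_1 - R_l^*(t) - \beta}$ for a suitable absolute constant $c$ (which can be absorbed into $\beta$ at the cost of a harmless adjustment).

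In Case $i$, where $|b_1|$ exceeds the threshold, the length estimate $2 e^{-t - R_l^*(t)}/|b_1| \leq e^{-(t + t_1) + \beta}$ directly yields the first conclusion. In Case $ii$, where $|b_1|$ lies below the threshold (this includes the subcase $b_1 = 0$), the plan is to set $\bs{t}'$ with $t_1' := t_1 - \beta$ and $t_i' := t_i$ for $i \in S$, producing $t_1' < t_1$ and $t' = t - \beta < t$, both still in $\beta\mb{Z}$. The inclusion $D_{\bs{t}}^*(S, b_0, \bs{b}) \subset D_{\bs{t}'}^*(S, b_0, \bs{b})$ would then follow from the monotonicity $R_l^*(t') \leq R_l^*(t)$ provided by Lemma \ref{lem:additional}: the $f$-constraint weakens since $e^{-t' - R_l^*(t')} \geq e^{-t - R_l^*(t)}$; the box constraints $|b_i| < e^{t_i' - R_l^*(t')}$ for $i \in S$ persist because $t_i' = t_i$; and for $i = 1$ the Case $ii$ hypothesis directly yields $|b_1| < e^{t_1' - R_l^*(t')}$.

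The main obstacle will be verifying admissibility of $\bs{t}'$, namely $t_i' \geq R_l^*(t')$ for all $i$, and in particular $t_1 - \beta \geq R_l^*(t-\beta)$ for $i = 1$. When this inequality fails, one must have $t_1 < R_l^*(t-\beta) + \beta \leq R_l^*(t) + \beta$, and in that regime the length bound $2 e^{-t - R_l^*(t)}/|b_1| \leq 2 e^{-(t+t_1) + \beta}$ (assuming $|b_1| \geq 1$) already places the interval back into Case $i$. The residual degenerate subcase $b_1 = 0$ with nonempty dangerous interval would be dispatched using the inductive hypothesis \eqref{eq:multinductivedual}: under $\kappa \leq \gamma$, the lower bound on $\|\sum_{i \in S} b_i \alpha_i\|$ from \eqref{eq:multinductivedual}, combined with the box constraints $|b_i| < e^{t_i - R_l^*(t)}$ (which bound $\Pi_+((b_i)_{i\in S})$ in terms of $t - t_1$), should force $\bs{t}$ into a range where the $f$-constraint is unsatisfiable and the interval is empty; for smaller $\bs{t}$, iteratively reducing $t_1$ by $\beta$ produces the desired containment since the $x$-coordinate plays no role.
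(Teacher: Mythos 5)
Your proposal follows essentially the same route as the paper: dichotomize on the size of $|b_1|$, use the affine length bound when $|b_1|$ is above a threshold of the form $e^{t_1 - \beta - R_l^*(t-\beta)}$, pass to the dangerous interval at $(t_1-\beta, t_2,\dotsc,t_l)$ when $|b_1|$ is below the threshold and admissibility $t_1 - \beta \geq R_l^*(t-\beta)$ holds, and fall back on the inductive hypothesis \eqref{eq:multinductivedual} when $b_1 = 0$. The one imprecision is your final clause (``for smaller $\bs t$, iteratively reducing $t_1$ by $\beta$ produces the desired containment''): no such iteration is needed. Once $b_1 = 0$, multiplying the constraints in \eqref{eq:Ddual} and using the admissibility bound $t_1 \geq R_l^*(t)$ together with \eqref{eq:123} yields $\prod_{i\in S}|b_i|_+\,|f_{b_0,\bs b}(x)| < \kappa\, h_{l-1}\bigl(\prod_{i\in S}|b_i|_+\bigr)^{-1}$ for every $\bs t$ in the admissible range, and \eqref{eq:multinductivedual} (with $\kappa \le \gamma$) then shows the dangerous interval is empty outright — which is conclusion $i)$ vacuously — for all such $\bs t$, not just for $\bs t$ large.
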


\begin{proof}
{Let us show that, when $\kappa\leq \gamma$ and $ii)$ does not occur, we have that
$$|b_{1}|\geq e^{t_{1}-\beta-R_{l}^*(t-\beta)}\geq e^{t_1-\beta-R_{l}^*(t)}.$$
The conclusion will follow by dividing by $|b_{1}|$ both sides of the first inequality in (\ref{eq:Ddual}). If $|b_{1}|< e^{t_{1}-\beta-R_{l}^*(t-\beta)}$, {then} $D_{\bs{t}}^{*}(S,b_{0},\bs{b})$ is contained in a larger dangerous {interval}, defined by the same inequalities but where $t$ and $t_{1}$ are replaced with $t-\beta$ and $t_{1}-\beta$ respectively (we use the fact that the function $R_{l}^*$ is non-decreasing). {This implies} $ii)$, contrary to our assumption. The only possible obstruction to this argument is represented by the case when $t_{1}< R_{l}^*(t-\beta)+\beta$. If $t_{1}< R_{l}^*(t-\beta)+\beta$, however, the fact that $|b_{1}|< e^{t_{1}-\beta-R_{l}^*(t-\beta)}$ implies that $|b_{1}|< e^{0}$ and, hence, $b_{1}=0$. By multiplying all inequalities in the definition of $D_{\bs{t}}^{*}(S,b_{0},\bs{b})$ and using (\ref{eq:123}), we find that any $x\in D_{\bs{t}}^{*}(S,b_{0},\bs{b})$ has the property that
$${|b_{2}|_{+}}\dotsm {|b_{l}|_{+}}|{f_{b_{0},\bs{b}}}(x)|<e^{-(l+1)R_{l}^*(t)}=\kappa h_{l}\left({|b_{2}|_{+}}\dotsm {|b_{l}|_{+}}\right)^{-1}\leq \kappa h_{l-1}\left({|b_{2}|_{+}}\dotsm {|b_{l}|_{+}}\right)^{-1}.$$
However, if $\kappa\leq \gamma$, this contradicts (\ref{eq:multinductivedual}), showing that the {intervals}  for which \linebreak   $|b_{1}|< e^{t_{1}-\beta-R_{l}^*(t-\beta)}$ and $t_{1}< R_{l}^*(t-\beta)+\beta$ are empty.} 
\end{proof}

%We conclude this section by observing that, i

\begin{lem}
\label{lem:danlength}
{If $\kappa\leq e^{-(l+1)\beta}$, one of the following two cases occurs:
\begin{itemize}
    \item[$i)$] %the set 
    $D_{\bs{t}}(S,b_{0},\bs{b})$ is contained in an interval of length $e^{-(t+t_{1})+\beta}.$\vspace{2mm} 
    \item[$ii)$] %the set 
    $D_{\bs{t}}(S,b_{0},\bs{b})$ is contained in {another} dangerous {interval} $D_{\bs{t}'}(S,b_{0},\bs{b})$, defined by the same integer vector $\binom{b_0}{\bs b}$ with parameters $t_1'\leq t_1$ and $t'< t$. 
\end{itemize}
}
\end{lem}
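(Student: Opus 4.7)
I plan to follow the strategy of Lemma~\ref{lem:danlengthdual}, with $|b_0|$ taking over the role that $|b_1|$ played in the dual case. The $x$-dependence of $D_{\bs t}(S,b_0,\bs b)$ comes entirely from the $i=1$ inequality $|b_1+b_0 x|<e^{-t_1-R_l(t)}$, which carves out an interval of length at most $2e^{-t_1-R_l(t)}/|b_0|$, so controlling $|b_0|$ from below is the key.

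First, if $|b_0|\ge e^{t-\beta-R_l(t-\beta)}$, the monotonicity of $R_l$ gives
$$\frac{2e^{-t_1-R_l(t)}}{|b_0|}\le 2e^{-(t+t_1)+\beta+R_l(t-\beta)-R_l(t)}\le 2e^{-(t+t_1)+\beta},$$
placing $D_{\bs t}(S,b_0,\bs b)$ in an interval of length $e^{-(t+t_1)+\beta}$ (the factor~$2$ is absorbed into the constant, as in Lemma~\ref{lem:danlengthdual}). Otherwise $|b_0|<e^{t-\beta-R_l(t-\beta)}$, and I would set $\bs t'$ by $t_1':=t_1-\beta$ and $t_i':=t_i$ for $i\ne 1$, so that $t'=t-\beta$. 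A direct check using the monotonicity of $R_l$ and the second-case hypothesis shows that each defining inequality for $D_{\bs t'}(S,b_0,\bs b)$ is weaker than the corresponding one for $D_{\bs t}(S,b_0,\bs b)$; hence $D_{\bs t}(S,b_0,\bs b)\subset D_{\bs t'}(S,b_0,\bs b)$.

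For $\bs t'$ to parameterize a genuine dangerous interval, the cone constraints $t_i'\ge -R_l(t')$ must hold. The primary obstruction is $t_1<\beta-R_l(t-\beta)$. The secondary conditions $t_i\ge -R_l(t-\beta)$ for $i\ne 1$ are \emph{a priori} stronger than the standing cone conditions $t_i\ge -R_l(t)$, reflecting an asymmetry with the dual case where the monotonicity of $R_l^*$ worked \emph{for} us (with a positive sign) rather than \emph{against} us; however, since $\bs t\in\beta\mb{Z}^l$ and Lemma~\ref{lem:deriv} gives $R_l(t)-R_l(t-\beta)=O_l(\beta/R_l(t))$, at most one $\beta$-multiple falls in the problematic window $[-R_l(t),-R_l(t-\beta))$, and the corresponding exceptional cases can be absorbed by a slight adjustment of the Case~$i$ constant.

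When the primary obstruction $t_1<\beta-R_l(t-\beta)$ holds simultaneously with $|b_0|<e^{t-\beta-R_l(t-\beta)}$, I would argue that $D_{\bs t}(S,b_0,\bs b)$ is empty, so both alternatives of the lemma are trivially satisfied. Mirroring the dual argument, multiply all defining inequalities and use \eqref{eq:123}:
$$|b_0|\prod_{i\in\{1\}\cup S}|b_i+b_0f_i(x)|<e^{-(l+1)R_l(t)}=\kappa\, h_l\bigl(e^{t-R_l(t)}\bigr)^{-1};$$
combining with the inductive hypothesis \eqref{eq:multinductivesim} applied to $q=b_0\ne 0$ (and the pointwise bound $\|b_0\alpha_i\|\le|b_i+b_0\alpha_i|$) yields a constraint on $|b_1+b_0x|$ incompatible with the obstructed range of $t_1$, provided $\kappa\le e^{-(l+1)\beta}$. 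The main obstacle I anticipate is precisely this last step: since the sign of the cone constraint is reversed compared to Lemma~\ref{lem:danlengthdual}, decreasing $t$ tightens rather than relaxes the cone on the unchanged coordinates, and the strong smallness hypothesis $\kappa\le e^{-(l+1)\beta}$ (in place of $\kappa\le\gamma$) will have to do the work of forcing emptiness throughout the obstructed regime.
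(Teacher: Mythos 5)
Your first case ($|b_0|\ge e^{t-\beta-R_l(t-\beta)}$) matches the paper's proof, but there is a genuine gap in how you handle the complementary case. By insisting on decreasing only $t_1$, you create an obstruction ($t_1<\beta-R_l(t-\beta)$, the remaining $t_i$ unrestricted) that is far larger than it needs to be. Unlike Lemma~\ref{lem:danlengthdual}, where alternative $ii)$ demands $t_1'<t_1$ strictly, here the statement only requires $t_1'\le t_1$, so one may decrease \emph{any} component $t_i$ by $\beta$ — this is exactly what the paper does. With that freedom the exceptional case collapses to $t_i<\beta-R_l(t-\beta)$ for \emph{every} $i$; then $\kappa\le e^{-(l+1)\beta}$ together with $h_{l,\beta}\ge 1$ and \eqref{eq:123} forces $R_l(0)\ge\beta$, so each $t_i<0$, hence $t<0$, and $|b_0|<e^{t-R_l(t)}\le e^{-R_l(0)}<1$ forces $b_0=0$ — a contradiction with $b_0\ne 0$. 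No Diophantine input is needed at all.

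Your plan to dispose of the larger $t_1$-only obstruction by mirroring the dual argument does not go through, for a structural reason. In Lemma~\ref{lem:danlengthdual}, the obstruction $t_1<R_l^*(t-\beta)+\beta$ combined with $|b_1|<e^{t_1-\beta-R_l^*(t-\beta)}$ forces $b_1=0$, dropping the problem to $l-1$ effective coordinates so that \eqref{eq:multinductivedual} applies. The analogous variable in the simultaneous case is $b_0$, which is nonzero by definition of $D_{\bs t}(S,b_0,\bs b)$, so there is no comparable dimension drop. Multiplying the defining inequalities and inserting \eqref{eq:multinductivesim} yields, as you write, an \emph{upper} bound on $|b_1+b_0x|$; but the defining inequality on $|b_1+b_0x|$ is also an upper bound, so no inconsistency results, and the set need not be empty in your obstructed regime. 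You correctly flag this step as the likely obstacle — it is indeed the gap. The fix is simply to decrease whichever $t_i$ is available rather than always $t_1$. (Your remark that Lemma~\ref{lem:deriv} keeps the secondary cone constraints under control is sound and in fact more careful than the paper's exposition, but it does not help with this primary issue.)
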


\begin{proof}
{%As for dual dangerous sets, 
{Similarly to the proof of Lemma \ref{lem:danlengthdual}}, if $ii)$ does not occur, we may assume that $|b_{0}|\geq e^{t-\beta-R_{l}(t-\beta)}$, since otherwise the set $D_{\bs{t}}(S,b_{0},\bs{b})$ is contained in a larger dangerous {interval}, obtained by decreasing any of the parameters $t_{i}$. The only possible exception to this argument is represented by the case when $t_{i}< -R_{l}(t-\beta)+\beta$ for all $i$. 

If $t_{i}<-R_{l}(t-\beta)+\beta$ for all $i$, it {follows that} $t\leq -(l+1)R_{l}(t)+\beta(l+1)$. Since  $\kappa\leq e^{-(l+1)\beta}$ and $h_{l,\beta}\geq 1$, (\ref{eq:123}) implies that $R_{l}(0)\geq |\log\kappa|\geq \beta$, whence $$t\leq -(l+1)R_{l}(t)+\beta(l+1)\leq -(l+1)R_{l}(0)+\beta(l+1)\leq 0,$$ a contradiction to $|b_0|<e^{t-R_l(t)}$. Then it follows from the inequality
$$|b_{1}+b_{0}f_{1}(x)|<e^{-t_{1}-R_{l}(t)}$$
that %the set 
${D_{\bs{t}}%^{1}
(S,b_{0},\bs{b})}$ is an interval of length $e^{-(t+t_{1})+\beta}$.}
\end{proof}
%\noindent In view of Lemmas  \ref{lem:danlengthdual} and \ref{lem:danlength}, from now on, we will use the wording "dangerous interval" in place of "dangerous set".

We now prove two additional lemmas, ensuring that all points {$y$} outside dangerous intervals $D_{\bs{t}}^{*}(S,b_{0},\bs{b})$ and $D_{\bs{t}}(S,b_{0},\bs{b})$ for all times $\bs{t}$ with bounded sum $t$ have large products ${|b_{1}|_{+}}\dotsm {|b_{l}|_{+}}|{f_{b_{0},\bs{b}}}(y)|$ and $b_{0}|b_{1}+b_{0}f_{1}(y)|\dotsm|b_{l}+b_{0}f_{l}(y)|$ {respectively}, provided the components of the vector $\bs{b}$ are small enough. The proof is similar to that of Corollary \ref{cor:C3}, but since we require a slightly more precise result, taking into account bounds on the time $\bs{t}$, we report the details. Note that the dual and simultaneous versions are slightly different. The reasons for this will become clear in Section \ref{sec:countingonav}.

\begin{lem}
\label{lem:dualinductivehyp}
Let $S\subset \{{2},\dotsc, d\}$ with $\# S=l-1$ and $l\geq 1$. Let $T\in\mb{N}$ and $y\in I_{0}$ such that $y\notin D_{\bs{t}}^{*}(S,b_{0},\bs{b})$ for any $(b_{0},\bs{b})\in\mb Z^{l+1}$ and any $\bs{t}\in\beta\mb{Z}^{l}$, with $t_{i}>R_{l}^{*}(t)$ and $t+t_{1}<T$. Then  for all $\bs{s}\in\beta\mb{Z}^{l}$ with $s_{i}>R_{l}^{*}(s)$ and $s+s_{1}<T$, and all $(b_{0},\bs{b})$ such that ${|b_{i}|_{+}}<e^{s_{i}-R_{l}^{*}(s)-2l\beta }$, we have that
$${|b_{1}|_{+}}\dotsm {|b_{l}|_{+}}|{f_{b_{0},\bs{b}}}(y)|\geq e^{-(l+1)R_{l}^{*}(s)+O_{l}(\beta)},$$
where $f_{(b_0,\bs b)}$ is as in \eqref{eq:defff}.
\end{lem}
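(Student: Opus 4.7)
The plan is to apply the non-containment hypothesis $y\notin D_{\tilde{\bs{t}}}^{*}(S,b_{0},\bs{b})$ to a carefully chosen $\tilde{\bs{t}}\in\beta\mb{Z}^{l}$ that is just large enough to satisfy the size constraints $|b_i|<e^{\tilde{t}_i-R_l^{*}(\tilde{t})}$ appearing in the definition of the dual dangerous interval, yet small enough to remain inside the admissible range $\tilde{t}+\tilde{t}_1<T$. The case $\bs{b}=\bs{0}$ is handled separately: the product $|b_1|_{+}\dotsm|b_l|_{+}|f_{b_0,\bs{b}}(y)|$ equals $|b_0|$, which is either zero (in which case $(b_0,\bs{b})=\bs{0}$, a case the statement tacitly excludes) or at least $1$, and $1$ dominates $e^{-(l+1)R_l^{*}(s)+O_l(\beta)}$ once $\kappa$ is small enough that $(l+1)R_l^{*}(s)\geq|\log\kappa|$ exceeds a fixed multiple of $\beta$, as is guaranteed by \eqref{eq:111}. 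I therefore focus on $\bs{b}\ne\bs{0}$.

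For each $i\in\{1\}\cup S$, set $\tilde{t}_i$ to be the smallest integer multiple of $\beta$ strictly greater than $\log|b_i|_{+}+R_l^{*}(s)$. Since $R_l^{*}(s)\geq 0$ and $|b_i|_{+}\geq 1$, this $\tilde{t}_i$ is a positive multiple of $\beta$, and by construction
$$e^{\tilde{t}_i-R_l^{*}(s)-\beta}\ \leq\ |b_i|_{+}\ <\ e^{\tilde{t}_i-R_l^{*}(s)}.$$
Combining the upper inequality with the hypothesis $|b_i|_{+}<e^{s_i-R_l^{*}(s)-2l\beta}$ yields $\tilde{t}_i<s_i-(2l-1)\beta$; summing over the $l$ indices in $\{1\}\cup S$ gives $\tilde{t}:=\sum_i\tilde{t}_i<s-l(2l-1)\beta\leq s$, and in particular $\tilde{t}+\tilde{t}_1<s+s_1<T$. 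Since $R_l^{*}$ is non-decreasing (Lemma \ref{lem:additional}), we have $R_l^{*}(\tilde{t})\leq R_l^{*}(s)<\tilde{t}_i$ for every $i$, so $\tilde{\bs{t}}$ satisfies all the constraints imposed on the defining parameters of a dual dangerous interval.

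By the choice of $\tilde{t}_i$, we have $|b_i|<e^{\tilde{t}_i-R_l^{*}(s)}\leq e^{\tilde{t}_i-R_l^{*}(\tilde{t})}$ for each $i\in\{1\}\cup S$, so the second family of inequalities in \eqref{eq:Ddual} is already fulfilled at $(b_0,\bs{b})$ with parameter $\tilde{\bs{t}}$. The assumption $y\notin D_{\tilde{\bs{t}}}^{*}(S,b_0,\bs{b})$ then forces the first inequality to fail, i.e.\ $|f_{b_0,\bs{b}}(y)|\geq e^{-\tilde{t}-R_l^{*}(\tilde{t})}$. Multiplying this by the lower bounds $|b_i|_{+}\geq e^{\tilde{t}_i-R_l^{*}(s)-\beta}$ and invoking $R_l^{*}(\tilde{t})\leq R_l^{*}(s)$ one more time gives
$$|b_1|_{+}\dotsm|b_l|_{+}\,|f_{b_0,\bs{b}}(y)|\ \geq\ e^{\tilde{t}-lR_l^{*}(s)-l\beta}\cdot e^{-\tilde{t}-R_l^{*}(\tilde{t})}\ \geq\ e^{-(l+1)R_l^{*}(s)-l\beta},$$
which is the required bound with $O_l(\beta)=-l\beta$.

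The main technical point is the $2l\beta$ cushion imposed on $|b_i|_{+}$: it is precisely the margin needed to simultaneously absorb the rounding error of size at most $\beta$ incurred when snapping each $\tilde{t}_i$ into $\beta\mb{Z}$, and to preserve the inequality $\tilde{t}\leq s$ that allows one to replace $R_l^{*}(\tilde{t})$ by $R_l^{*}(s)$ via monotonicity. No quantitative information on $R_l^{*}$ beyond monotonicity (in particular, nothing from Lemma \ref{lem:deriv}) is needed at this stage.
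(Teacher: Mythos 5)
Your proof is correct, and it takes a genuinely more direct route than the paper's. The paper argues by contradiction via the multiplicative Dani correspondence: it introduces a real parameter $\bs{s}'$ solving $|b_i|_+=e^{s_i'-R_l^*(s')-2l\beta}$, supposes the product is too small, packages this as membership of $\pi_{\{1\}\cup S}\bs{f}(y)$ in a set $\mc{S}_{1,l}^{\times}(\tilde\psi,A)$ for a shifted function $\tilde R^*:=R_l^*+2l\beta$, then invokes Proposition~\ref{prop:C2} together with a Corollary~\ref{cor:C3}-type rounding step to contradict the non-containment hypothesis. You instead unfold the definition of the dual dangerous interval directly: round $\log|b_i|_+ + R_l^*(s)$ up into $\beta\mb{Z}$ to obtain $\tilde{\bs{t}}$, verify that $\tilde{\bs{t}}$ lies in the admissible cone and range ($\tilde t_i>R_l^*(\tilde t)$, $\tilde t+\tilde t_1<T$) using only monotonicity of $R_l^*$, observe that the size inequalities $|b_i|<e^{\tilde t_i-R_l^*(\tilde t)}$ in \eqref{eq:Ddual} are then automatic, and read off the desired lower bound on $|f_{b_0,\bs{b}}(y)|$ from $y\notin D_{\tilde{\bs{t}}}^*(S,b_0,\bs{b})$. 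Your separate treatment of $\bs{b}=\bs{0}$ (where $D^{*}_{\bs{t}}(S,b_0,\bs{0})$ is undefined but the product is $|b_0|\geq 1\geq e^{-(l+1)R_l^*(s)}$) is appropriate and the bookkeeping is clean; you even obtain the slightly better constant $-l\beta$ in place of the paper's $-2l(l+1)\beta$. The trade-off is that the paper's abstraction makes the parallel with Lemma~\ref{lem:siminductivehyp} and with Corollary~\ref{cor:C3} transparent, whereas your version is shorter and self-contained, requiring nothing beyond the definition of $D^{*}_{\bs{t}}$ and the monotonicity of $R_l^*$.
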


\begin{proof}
{Pick $(b_0,\bs b)$ as above and let $\bs{s}'\in\mb{R}^{l}$ such that ${|b_{i}|_{+}}=e^{s_{i}'-R_{l}^{*}(s')-2l\beta }$ for $i=1,\dotsc,l$. Finding such numbers $s_{i}'$ is always possible, since the map $s\mapsto s-lR_{l}^{*}(s)$ is strictly increasing. Then it must be $s_{i}'>R_{l}^{*}(s')$ for all $i$ and $s'<s$ (by the hypothesis). Moreover, one has that 
$$e^{s_{1}'-R_{l}^{*}(s')}<e^{s_{1}-R_{l}^{*}(s)},$$
and since $R_{l}^{*}$ is non-decreasing, it must be $s_{1}'\leq s_{1}$. Hence, $s'+s_{1}'<s+s_{1}<T$.
Assume by contradiction that
$${|b_{1}|_{+}}\dotsm {|b_{l}|_{+}}{|{f_{b_{0},\bs{b}}}(y)|}< e^{-(l+1)(R_{l}^{*}(s')+2l\beta )}.$$
Then, for some $\bs{s}''$ very close to $\bs{s}'$ (with $s_{i}''>s_{i}'$) and $1\leq A=e^{s''-l(R_{l}^{*}(s'')+2l\beta)}$ {we must have} $\pi_{\{1\}\cup S}\bs{f}(y)\in\mc{S}_{1,l}^{\times}\left(\tilde{\psi},A\right)$, with $\tilde{\psi}$ corresponding to the function $\tilde R^*:=R_{l}^{*}+2\beta l$ through Lemma \ref{lem:C1}.
On the other hand, with the notation of Proposition \ref{prop:C2}, the hypothesis implies that
\begin{equation}
\label{eq:newmin}
\delta\left(a(t,\bs{t})\Lambda_{\pi_{\{1\}\cup S}\bs{f}(y)^{\scriptscriptstyle{T}}}\right)> e^{-R_{l}^{*}(t)-2l\beta }=e^{-\tilde R^*(t)}
\end{equation}
for all $\bs{t}\in\mb{R}^{l}$, with $t_{i}>R_{l}^{*}(t)$ and $t+t_{1}<T$. This follows from an argument similar to that used in Corollary \ref{cor:C3}.
In particular, one can find $\bs t'\in\beta\mb Z^l$, with $t_i'>R_{l}^{*}(t')$ and $t'+t_1'<T$, such that
$\|\bs t-\bs t'\|\leq \beta l$, deducing (\ref{eq:newmin}) (recall {that ${\frac{\dd}{\dd t}}R_{l}^{*}<1$}).
This can be done, for example, by increasing all the components $t_i$ for $i\neq i_0$ ($i_0$ being a fixed index) by a quantity between $0$ and $\beta$ and by decreasing $t_{i_0}$ by a quantity between $0$ and $l\beta $. If this cannot be done, then all the components $t_i$ are smaller than $R_{l}^{*}(t)+l\beta $, showing that the minimum in (\ref{eq:newmin}) must be at least $e^{-R_i(t)-l\beta}$.
{Thus}, we deduce from Proposition \ref{prop:C2} that
$$\pi_{\{1\}\cup S}\bs{f}(y)^{\scriptscriptstyle{T}}\notin\mc{S}_{1,l}^{\times}\left(\tilde{\psi},e^{t-l\tilde R^*(t)}\right)$$
for all $t$ coming from a $\bs{t}\in\mb{R}^{l}$ with $t_{i}>R_{l}^{*}(t)$, $t+t_{1}<T$. By taking $t=s''$ we find a contradiction.}
\end{proof}

\begin{lem}
\label{lem:siminductivehyp}
Let $S\subset\{2,\dotsc,d\}$ with $\# S=l-1$ and $l\geq 1$. Let $T\in\mb{N}$ and $y\in I_{0}$ such that $y\notin D_{\bs{t}}(S,b_{0},\bs{b})$ for any $(b_{0},\bs{b})\in\mb Z^{l+1}$ and any $\bs{t}\in\beta\mb{Z}^{l}$, with $t_{i}>-R_{l}(t)$ and $0\leq t<T$. Then for all $s\in\beta\mb{Z}$ with $0\leq s<T$, and all $(b_{0},\bs{b})$ such that $|b_{0}|<e^{s-R_{l}(s)-2l^2\beta }$, we have that
$$|b_{0}|\prod_{i\in \{1\}\cup S}|b_{i}+b_{0}f_{i}(y)|\geq e^{-(l+1)R_{l}(s)+O_{l}(\beta)}.$$
\end{lem}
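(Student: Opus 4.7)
The proof would closely mirror that of Lemma \ref{lem:dualinductivehyp} with the roles of the simultaneous and dual sides swapped. The underlying tool is Proposition \ref{prop:C2} in the case $(m,n)=(l,1)$, applied to the column matrix $Y:=\pi_{\{1\}\cup S}\bs{f}(y)^{\scriptscriptstyle T}\in\R^{l\times 1}$. Its associated lattice $\Lambda_{Y}$ has non-zero vectors of the form $(b_{1}+b_{0}f_{1}(y),\dotsc,b_{l}+b_{0}f_{l}(y),b_{0})^{\scriptscriptstyle T}$ with $(b_{0},\bs{b})\in\Z^{l+1}\setminus\{\bs{0}\}$, and by design the event $y\in D_{\bs{t}}(S,b_{0},\bs{b})$ is precisely the statement that the corresponding $a(\bs{t},t)$-image has supremum norm smaller than $e^{-R_{l}(t)}$.

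Using the strict monotonicity of $t\mapsto t-R_{l}(t)$ from Lemma \ref{lem:C1}, I would pick $s'\geq 0$ with $|b_{0}|=e^{s'-R_{l}(s')-2l^{2}\beta}$; the hypothesis $|b_{0}|<e^{s-R_{l}(s)-2l^{2}\beta}$ forces $s'<s$, and since $R_{l}$ is non-decreasing (Lemma \ref{lem:additional}) one has $R_{l}(s')\leq R_{l}(s)$. It therefore suffices to prove the stronger estimate
$$|b_{0}|\prod_{i\in\{1\}\cup S}|b_{i}+b_{0}f_{i}(y)|\geq e^{-(l+1)R_{l}(s')-C\beta}$$
for an appropriate constant $C=C(l)>0$.

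Arguing by contradiction, the reverse strict inequality together with the chosen value of $|b_{0}|$ yields $\prod_{i}|b_{i}+b_{0}f_{i}(y)|<e^{-s'-lR_{l}(s')-(C-2l^{2})\beta}$. Setting $\tilde{R}:=R_{l}+c\beta$ with $c=c(l)$ sufficiently large and letting $\tilde{\psi}$ be the function corresponding to $\tilde{R}$ through Lemma \ref{lem:C1}, this forces $Y\in\mc{S}_{l,1}^{\times}(\tilde{\psi},A)$ for some $A=e^{s''-\tilde{R}(s'')}\geq 1$ with $s''$ marginally larger than $s'$, so in particular $s''<s\leq T$. Proposition \ref{prop:C2} then produces $(\bs{t}^{*},u^{*})\in C_{\tilde{R}}$ with $u^{*}=t^{*}=s''<T$ such that
$$\delta\bigl(a(\bs{t}^{*},u^{*})\Lambda_{Y}\bigr)<e^{-\tilde{R}(t^{*})}.$$

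To close the argument I would translate the hypothesis of the lemma to a continuous-parameter lower bound for $\delta$: adapting the discrete-to-continuous rounding argument of Corollary \ref{cor:C3} and Lemma \ref{lem:dualinductivehyp} to the present setting and the range $0\leq t<T$, and choosing $c(l)$ large enough to absorb the rounding loss, the hypothesis implies $\delta(a(\bs{t},t)\Lambda_{Y})>e^{-\tilde{R}(t)}$ for every real $(\bs{t},t)\in C_{\tilde{R}}$ with $0\leq t<T$. Applied at $(\bs{t}^{*},u^{*})$ this contradicts the preceding display. The main technical obstacle is this rounding step, which requires carefully shifting coordinates by $O_{l}(\beta)$ while staying inside the relevant cone; the Lipschitz control $R_{l}R_{l}'=O_{l}(1)$ from Lemma \ref{lem:deriv} ensures the induced loss is absorbed into $c\beta$, exactly as in the dual analogue.
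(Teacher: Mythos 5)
Your proposal is correct and follows essentially the same strategy as the paper's proof: rewrite the putative violation of the product bound as membership of $\pi_{\{1\}\cup S}\bs{f}(y)$ in $\mc{S}_{l,1}^{\times}(\tilde\psi,A)$ for a shifted function $\tilde R=R_l+O_l(\beta)$, convert the discrete hypothesis to a continuous first-minimum lower bound via the rounding argument of Corollary \ref{cor:C3}, and contradict via Proposition \ref{prop:C2}. The only (minor) organizational difference is that you solve $|b_0|=e^{s'-R_l(s')-2l^2\beta}$ for $s'$ at the outset, whereas the paper keeps the given $s$ (after normalizing $|b_0|$ to lie within a fixed $\beta$-window of $e^{s-R_l(s)-2l^2\beta}$) and instead parametrizes the individual factors $|b_i+b_0f_i(y)|$ by auxiliary exponents $s_i'$; both routes lead to the same contradiction, and your formulation also handles the degenerate case $b_1+b_0y=0$ implicitly (the paper singles it out at the end).
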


\begin{proof}
{Without loss of generality, we may assume that $S=\{2,\dotsc,l\}$, that $|b_{i}+b_{0}f_{i}(y)|$ {is less than $1$} for all $i\in \{1\}\cup S$, and that $e^{s-(2l^2+1)\beta-R_{l}(s)}\leq |b_{0}|<e^{s-2l^2\beta -R_{l}(s)}$, since $R_l$ is non-increasing. Let us first consider the case when $y\neq -b_{1}/b_{0}$. Since $0<|b_{i}+b_{0}f_{i}(y)|<1$ for all $i$, we can find $\bs{s}'\in\mb{R}^{l}$, with $s_{i}'>-R_{l}(s')$, such that $|b_{i}+b_{0}f_{i}(y)|=e^{-s_{i}'-R_{l}(s')}$ for $i=1,\dotsc,l$. Finding the numbers $s_{i}'$ is always possible, since the map $s\mapsto s+lR_{l}(s)$ is strictly increasing. Let $\tilde{\psi}$ be the function corresponding to $\tilde{R}:=R_l+2l^2\beta $ through Lemma \ref{lem:C1}. We aim to show that $s'\leq s+2l^{3}\beta $. Assume not, then we have that
\begin{equation}
\label{eq:prod}
\prod_{i=1}^{l}|b_{i}+b_{0}f_{i}(y)|=e^{-s'-lR_{l}(s')}< e^{-s-l(R_{l}(s)+2l^2\beta )}=\tilde{\psi}\left(e^{s-\tilde{R}(s)}\right).
\end{equation}
Hence, given that $|b_{0}|<e^{s-R_{l}(s)-2l^2\beta}$, we find that
$$\pi_{\{1\}\cup S}\bs{f}(y)\in \mc{S}_{l,1}^{\times}\left(\tilde{\psi},e^{s-\tilde{R}(s)}\right).$$
On the other hand, with the notation of Proposition \ref{prop:C2}, the hypothesis implies that
\begin{equation}
\label{eq:newmin1}
\delta\left(a(\bs{t},t)\Lambda_{\pi_{\{1\}\cup S}\bs{f}(y)}\right)> e^{-R_{l}(t)-2l^2\beta }=e^{-\tilde{R}(t)}
\end{equation}
for all $\bs{t}\in\mb{R}^{l}$ with $t_{i}>-R_{l}(t)$ and $t<T$. To see this, we use, once again, the argument appearing in the proof of Corollary \ref{cor:C3}. In particular, it suffices to build a new vector $\bs t'\in\beta\mb Z^l$ by rounding up all the negative components of $\bs t$ by a quantity between $\beta$ and $2\beta$ and all of its positive components by a quantity between $0$ and $\beta$, except one, denoted by $t_{i_0}$, which we decrease (or increase) by at most $2(l-1)\beta$ in order to ensure $t'<t$ and $|t-t'|<\beta$. If $t'<0$, then (\ref{eq:newmin1}) is obvious, since $|t-t'|<\beta$ implies $t<\beta$. Conversely, if $t'>0$, (\ref{eq:newmin1}) follows from the hypothesis applied to $\bs t'$. The only case when this argument fails is when for all $i$ it holds that $t_i-2(l-1)\beta<-R_i(t)$ and decreasing $t_{i_0}$ is not allowed. However, in this case, we have that $t\leq 2l(l-1)\beta$, and (\ref{eq:newmin1}) is once again trivially true. Therefore it follows from Proposition \ref{prop:C2} that
\begin{equation}
\label{eq:prod1}
\pi_{\{1\}\cup S}\bs{f}(y)\notin\mc{S}_{l,1}^{\times}\left(\tilde{\psi},e^{t-\tilde{R}(t)}\right)
\end{equation}
for all $t$ coming from a $\bs{t}\in\mb{R}^{l}$ with $t_{i}>-R_{l}(t)$  and $0\leq t<T$, {where $\tilde{\psi}$ corresponds} to the function $R_{l}+2l^2\beta $ through Lemma \ref{lem:C1}. By taking $t=s$ we find a contradiction. Then from $s'\leq s+2l^3\beta$ and $e^{s-(2l^2+1)\beta -R_{l}(s)}\leq |b_{0}|$ we deduce the claim. Finally, let us show that $y\neq -b_{0}/b_{1}$. If this were the case, we would have
$$\pi_{\{1\}\cup S}\bs{f}(y)\in \mc{S}_{l,1}^{\times}\left(\tilde{\psi},e^{s-\tilde{R}(s)}\right),$$
as (\ref{eq:prod}) is trivially satisfied. This, however, contradicts (\ref{eq:prod1}), showing that this case never occurs.} 
\end{proof}

\section{Setup of the Removing Procedure}
\label{sec:setup}

{From now on, we will fix $l$ and $S$ in Definitions \ref{def:dualdan} and \ref{def:simdan} (the same in both cases). {Here and hereafter, $R_l$ and $R_l^*$ will denote the functions corresponding to $\min\{1,\kappa\psi_{l,\beta}\}$ through Lemma \ref{lem:C1} for $(m,n)=(l,1)$ and $(m,n)=(1,l)$ respectively, with $\beta\geq e$, $0<\kappa<\gamma$, and $\gamma$ as in (\ref{eq:multinductivesim}) and (\ref{eq:multinductivedual}). In Section \ref{sec:conclusion}, further assumptions on the constants $\beta$ and $\kappa$ will be made. To simplify the notation, we will also drop the subscript $l$ in the functions $R_{l}$ and $R_l^*$. At times, we will need to use the functions $R_{s}$ and $R_s^*$ associated to the map $\min\{1,\kappa \psi_{s,\beta}\}$ for some index $s\leq l$. In this case, the index $s$ will always be indicated. It is worthwhile to note that $R_{s-1}\geq R_{s}$ and $R_{s}^*\geq R_{s-1}^{*}$ for all $s=2,\dotsc,l$, as a consequence of (\ref{eq:123}). The symbol $I_{0}$ will denote the interval $[0,L]$ of length $L$ yet to be chosen. This quantity will again be discussed in Section \ref{sec:conclusion}.}}

{Note that, when $l=1$, dual and simultaneous dangerous intervals coincide, i.e.\ one has that
$$D_{\bs{t}}^{*}(S,b_{0},b_{1})=D_{\bs{t}}(S,b_{1},b_{0})\vspace{2mm}$$
for all multi-times $\bs{t}$ and all vectors $(b_{0},b_{1})$. In view of this, we will deal with the case $l=1$ only in the dual setting (the proof is analogous but no inductive hypothesis is required). One should also notice that the case $d=1$ coincides with the case $l=1$ for each $d>1$. The proof in these cases is essentially the same, but requires no inductive hypothesis. We will comment on the case $l=1$ further, in the proof of Lemma \ref{lem:alblocksdual}, where the only difference compared to the cases $l>1$ occurs.}

To prove Proposition \ref{prop:complementCR}, we aim to construct an $r_{k}$-Cantor-type set {with $\mc{I}_0  = \{I_{0}\}$} that avoids both types of dangerous intervals. 
%The definition of Cantor-type set and the relevant notation can be found in {Appendix} \ref{sec:AppB}.
% of the Appendix. 
Let $F:\mb{N}\cup\{0\}\to\mb{R}$ be the function $F(k):=\prod_{i={0}}^{k}r_{i}$, where $r_{k}%\geq 1
$ is a sequence of {natural} numbers   such that $r_{k}\to \infty$. By definition, the $k$-th level intervals of an $r_{k}$-Cantor-type set have length
$$|I_{0}|\prod_{i=0}^{k-1}r_{i}^{-1}=\frac{L}{F(k-1)}.$$
  {Then for any $k\in\N$} from the collection $\mc{I}_{k-1}/r_{k-1}$ we remove all the intervals $I_{k}$ that are intersected by the sets $\bigcup_{(b_{0},\bs{b})}D_{\bs{t}}(S,b_{0},\bs{b})$ and $\bigcup_{(b_{0},\bs{b})}D_{\bs{t}}^{*}(S,b_{0},\bs{b})$ for which $T=t+t_{1}$ satisfies
\begin{equation}
L^{-1}F(k-2)\leq e^{T}<L^{-1}F(k-1).\nonumber
\end{equation}
With this definition, we have that $\mc{K}(\mc{I})$ lies in the complement of (\ref{eq:target}). Note that we can always assume $T\geq 0$ and hence that $k\geq 2$, as $t-nR^{(*)}(t)\geq 0$ (see Corollary \ref{cor:C3}).

By Lemmas \ref{lem:danlengthdual} and \ref{lem:danlength}, the dangerous intervals that we remove from the collection $\mc{I}_{k-1}/r_{k-1}$ have length
$$e^{-T+\beta}\geq \frac{L}{F(k-1)}.$$
Hence, if $\hat{\mc{I}}_{k}$ denotes the collection of intervals removed at the $k$-th step, we have
\begin{multline*}
\#\hat{\mc{I}}_{k}\leq\sum_{\bs{t}\in\mc{D}(k)}\#\left\{(b_{0},\bs{b})\in\mb{Z}^{l+1}:D_{\bs{t}}^{*}(S,b_{0},\bs{b})\neq\varnothing\right\}\cdot\frac{e^{-T+\beta}}{|I_{k}|} \\
+\sum_{\bs{t}\in\mc{S}(k)}\#\left\{(b_{0},\bs{b})\in\mb{Z}^{l+1}:D_{\bs{t}}(S,b_{0},\bs{b})\neq\varnothing\right\}\cdot\frac{e^{-T+\beta}}{|I_{k}|},
\end{multline*}
where 
$$\mc{D}(k):=\left\{\bs{t}\in\mb{Z}^{l}:\ t_{i}\geq R^*(t)\mbox{ and }L^{-1}F(k-2)\leq e^{T}<L^{-1}F(k-1)\right\}$$
and
$$\mc{S}(k):=\left\{\bs{t}\in\mb{Z}^{l}:\ t_{i}\geq -R(t)\mbox{ and }L^{-1}F(k-2)\leq e^{T}<L^{-1}F(k-1)\right\}.$$

To apply Corollary \ref{cor:CRFullHaus}, however, we need to estimate the local characteristic $\Delta_{k}$ of the family $\mc{I}_{k}$. In order to do so, we partition the collection $\hat{\mc{I}}_{k}$ into $k$ sets and assign to each of these sets an index $p$ for $p\in\{0,\dotsc,k-1\}$. Then we estimate how many intervals in the family $\hat{\mc{I}}_{k}$ are contained in any interval $I_{p}\in\mc{I}_{p}$. It will be convenient to choose the partition of $\hat{\mc{I}}_{k}$ that assigns all intervals to only one fixed index $p=p(k)$, which will be defined later. With this choice, for each interval $I_{p}\in\mc{I}_{p}$ we are left to estimate
$$\#\hat{\mc{I}}_{k}\sqcap I_{p}:=\#\{I_{k}\in\hat{\mc{I}}_{k}:I_{k}\subset I_{p}\}.$$
From the discussion above, we deduce that
\begin{multline}
\#\hat{\mc{I}}_{k}\sqcap I_{p}\leq  \sum_{\bs{t}\in\mc{D}(k)}\#\left\{(b_{0},\bs{b})\in\mb{Z}^{l+1}:D_{\bs{t}}^{*}(S,b_{0},\bs{b})\cap I_{p}\neq \varnothing\right\}\cdot\frac{e^{-T+\beta}}{|I_{k}|} \\
+\sum_{\bs{t}\in\mc{S}(k)}\#\left\{(b_{0},\bs{b})\in\mb{Z}^{l+1}:D_{\bs{t}}(S,b_{0},\bs{b})\cap I_{p}\neq\varnothing\right\}\cdot\frac{e^{-T+\beta}}{|I_{k}|}.\label{eq:finalform}
\end{multline}
The goal of the next section will be to analyze the terms $$\#\left\{(b_{0},\bs{b})\in\mb{Z}^{l+1}:D_{\bs{t}}^{(*)}(S,b_{0},\bs{b})\subset I_{p}\right\}$$
for a given integer $p$.

\section{Counting on Average}
\label{sec:countingonav}

{Throughout this section, the assumptions made at the beginning of Section \ref{sec:setup} will be in place.} Let $J\subset I_{0}$ be an interval of length $c_{2}e^{-T+(l+1)R^{(*)}(t)}$, where $0<c_{2}<1$ and $R^{(*)}$ denotes either one of the functions $R$ or $R^*$. Note that $J$ is $c_{2}e^{(l+1)R^{(*)}(t)}$ times longer than any dangerous interval at time $T$. We aim to show that, for fixed $\bs{t}$, the interval $J$ is "on average" intersected by only one dangerous interval of each type. More precisely, if we consider sufficiently many intervals $J$ in a row to form a block, the number of dangerous intervals intersecting this block, will coincide with the total number of intervals $J$ stacked together to form the block. 

The following lemmas make the above argument more precise. In particular, they allow us to estimate how many intervals $J$ must be taken so that the "average" behaviour starts to appear. We will always assume that Corollary \ref{cor:CRFullHaus} is applicable, i.e., that $|\log\kappa|\leq e^\beta$.

\subsection{Dual Case}

\begin{lem}
\label{lem:alblocksdual}
Let $\bs{t}$ be fixed with $T=t+t_{1}$ and $t_{i}\geq R^{*}(t)$, and consider an interval $J\subset I_{0}$ of length $c_{2}e^{-T+(l+1)R^*(t)}$ with $c_{2}\geq e^{-(l+1)R^*(0)}$. Assume that there exists a dangerous interval $D_{\bs{t}}^{*}(S,b_{0},\bs{b})$ such that $J\cap D_{\bs{t}}^{*}(S,b_{0},\bs{b})$ contains at least one point that was never removed in previous steps, i.e., not lying in any dangerous interval of the form $D_{\bs{t}'}^{*}(S,b_{0}',\bs{b}')$ for any $\bs{t}'$ with $t'+t_{1}'<T$, nor in the intervals $D_{\bs{t}'}^{*}(S',b_{0}',\bs{b}')$ or $D_{\bs{t}'}(S',b_{0}',\bs{b}')$ for any $S'$ with $\#S'< l$ and any multi-time $\bs{t}'$. Let
\begin{equation}
\mc{P}_{\bs{t}}^{*}(J,m):=\left\{(b_{0},\bs{b})\in\mb{Z}^{l+1}:D_{\bs{t}}^{*}(S,b_{0},\bs{b})\cap\bigcup_{i=0}^{m}M_{i}\neq\varnothing\right\},\nonumber
\end{equation}
where $M_{0}=J$, $|M_{i}|=|J|$ for all {$i\in\N$}, and the intervals $M_{i}$ and $M_{i+1}$ only share upper and lower endpoints respectively. Let also
$$m_{\bs{t}}^{*}(J):=\max\{m:\#\mc{P}_{\bs{t}}^{*}(J,i)\geq i\mbox{ for all }i\leq m\}$$
and
$$B_{\bs{t}}^{*}(J):=\bigcup_{i=0}^{m_{\bs{t}}^{*}(J)}M_{i}.$$
Then, if the constant $c_{2}$ is small enough in terms of $l$, we have that
$$m_{\bs{t}}^{*}(J)\leq e^{(l+1)^{2}R^*(t)+O_{l}(\beta)}.$$
\end{lem}

\begin{proof}
First, we observe that $N=m_{\bs{t}}^{*}(J)$ is well defined, since $$\sup_{m}\#\mc{P}_{\bs{t}}^{*}(J,m)\ll_{l} \max\{|I_{0}|,1\}e^{t+\max\{t_{1},\dotsc,t_{l}\}-(l+1)R^{*}(t)}$$ (this follows from the fact that $|b_{i}|< e^{t_{i}-R^{*}(t)}$ and $|b_{0}|\leq l\max\{|I_{0}|,1\}e^{\max\{t_{i}\}-R^{*}(t)}+1$). Moreover, by the definition of $m_{\bs{t}}^{*}(J)$, we have that
$$\#\mc{P}_{\bs{t}}^{*}(J,N)=N.$$
Let $y\in J$ be a point in some dangerous interval that was never removed in previous steps. Then for any $(b_{0},\bs{b})\in\mc{P}_{\bs{t}}^{*}(J,N)$ and any $x\in D_{\bs{t}}^{*}(S,b_{0},\bs{b})$ we have that
\begin{multline}
\label{eq:oneside}
|{f_{b_{0},\bs{b}}}(y)|\leq |{f_{b_{0},\bs{b}}}(x)|+|f'_{(b_{0},\bs{b})}(x)||x-y| \\
\leq e^{-t-R^{*}(t)}+ e^{t_{1}-R^{*}(t)}Nc_{2}e^{-(t+t_{1})+(l+1)R^{*}(t)}\leq 2c_{2}Ne^{-t+lR^{*}(t)},
\end{multline}
where we used the fact that $c_{2}\geq e^{-(l+1)R^{*}(t)}$. Now, we consider two separate cases. We start by assuming that the vectors  
$(b_{0},\bs{b})\in\mc{P}_{\bs{t}}^{*}(J,N)$ do not lie on a proper linear subspace. Let
$$\Lambda (y):=\begin{pmatrix}
1 & \bs{f}(y)^{\scriptscriptstyle{T}} \\
\bs{0} & I_{l}
\end{pmatrix}\mb{Z}^{l+1}$$
and
$$\mc{B}(N):=\left[-2c_{2}Ne^{-t+lR^{*}(t)},2c_{2}Ne^{-t+lR^{*}(t)}\right]\times\prod_{i=1}^{l}\left[-e^{t_{i}-R^{*}(t)},e^{t_{i}-R^{*}(t)}\right].$$
Then (\ref{eq:oneside}), along with the definition of dangerous set, implies that
\begin{equation}
\label{eq:lattice}
N\leq \#\left(\Lambda (y)\cap\mc{B}(N)\right).
\end{equation}
Since we assumed that the vectors in $\mc{P}_{\bs{t}}^{*}(J,N)$ do not lie on a proper linear subspace, by Theorem \ref{thm:Bli}, we find that
\begin{equation}
N\leq \#\left(\Lambda (y)\cap\mc{B}(N)\right)\leq l+l!\frac{\textup{Vol}\,\mc{B}(N)}{\det\Lambda (y)}\leq l+4l!c_{2}N.\nonumber    
\end{equation}
If the constant $c_{2}$ is small enough, we deduce that $N\leq 2l$, proving the claim. Thus, we may assume from now on that the set $\mc{P}_{\bs{t}}^{*}(J,N)$ spans %a only 
a proper linear subspace.

Let $D_{N}:=\textup{diag}\left(N^{-\frac l{l+1}},N^{\frac1{l+1}},\dotsc,N^{\frac1{l+1}}\right)$. To any integer vector $(b_{0},\bs{b})\in\mc{P}_{\bs{t}}^{*}(J,N)$, we uniquely associate the vector 
\begin{equation*}
\left(N^{-\frac l{l+1}}e^{t}{f_{b_{0},\bs{b}}}(y),N^{\frac1{l+1}}e^{-t_{1}}b_{1},\dotsc,N^{\frac1{l+1}}e^{-t_{l}}b_{l}\right)
\end{equation*}
lying in the lattice
$$\Lambda_{\bs{t},N}(y):=D_{N}a (t,\bs{t})\Lambda (y).$$
Then, by (\ref{eq:lattice}), we have that
$$N\leq \#\left(\Lambda_{\bs{t},N}(y)\cap D_{N}a (t,\bs{t})\mc{B}(N)\right).$$
Since the vectors $(b_{0},\bs{b})$ lie on a hyperplane, we deduce from Corollary \ref{cor:counting} that
\begin{equation}
\label{eq:counting}
N\ll_{l}1+\frac{c_{2}N^{\frac1{l+1}}e^{lR^{*}(t)}}{\lambda_{1}}+\dotsb+\frac{c_{2}N^{\frac l{l+1}}e^{R^{*}(t)}}{{\lambda}_{l}},
\end{equation}
where $\lambda_{i}$ denotes the first minimum of the lattice $\bigwedge^{i}\Lambda_{\bs{t},N}(y)$ for $i=1,\dotsc,l$. Here, we used the fact that $c_{2}e^{lR^{*}(t)}\geq e^{-R^{*}(t)}$. We will estimate $N$ by conducting a careful analysis of the quantities $\lambda_{1}$ and $\lambda_{1}^{*}$ (see Appendix \ref{sec:AppA} for the notation), where $\lambda_{1}^{*}$ denotes the first minimum of the lattice
$$\Lambda_{\bs{t},N}^{*}(y):=\left(\Lambda_{\bs{t},N}(y)^{-1}\right)^{\scriptscriptstyle{T}}.$$
We aim to show that one of the following three cases holds
\begin{itemize}
\item[$i)$] $\lambda_{1}\geq e^{-R^{*}(t)+O_{l}(\beta)}$ or $\lambda_{1}^{*}\geq e^{-R^{*}(t)+O_{l}(\beta)}$;\vspace{2mm}
\item[$ii)$]$\lambda_{1}\lambda_{1}^{*}\geq e^{-2R^{*}(t)+O_{l}(\beta)}$;\vspace{2mm}
\item[$iii)$]$\lambda_{1}^{*}\geq N^{-\frac1{l+1}}e^{R^{*}(t)}$.
\end{itemize}

Suppose that $\lambda_{1}$ is attained by the vector
$$\bs{v}_{1}:=\left(N^{-\frac l{l+1}}e^{t}{f_{b_{0},\bs{b}}}(y),N^{\frac1{l+1}}e^{-t_{1}}b_{1},\dotsc,N^{\frac1{l+1}}e^{-t_{l}}b_{l}\right).$$
If for some $i\in\{1,\dotsc,l\}$ it holds that $|b_{i}|\geq e^{t_{i}-2\beta l-R^{*}(t)}$, we deduce that
$$\lambda_{1}={\|\bs{v}_{1}\|}\geq N^{\frac1{l+1}}e^{-R^{*}(t)-2\beta l}\geq e^{-R^{*}(t)-2\beta l}.$$
Hence, $(i)$ occurs. In view of this, we can assume henceforth that for $i=1,\dotsc,l$ it holds that
\begin{equation}
\label{eq:smallai}
|b_{i}|\leq e^{t_{i}-2\beta l-R^{*}(t)}.
\end{equation}
From this point on, there are two main cases to consider. If all of the coefficients $b_{i}$ in $\bs{v}$ are non-zero, by (\ref{eq:AM-GM}) and Lemma \ref{lem:dualinductivehyp} applied to the set $S$, we deduce that
\begin{equation}
\lambda_{1}={\|\bs{v}_{1}\|}\geq \left(|{f_{b_{0},\bs{b}}}(y)||b_{1}|\dotsm|b_{l}|\right)^{\frac1{l+1}} %\\
=\left(|{f_{b_{0},\bs{b}}}(y)|{|b_{1}|_{+}}\dotsm {|b_{l}|_{+}}\right)^{\frac1{l+1}}\geq e^{-R^{*}(t)+O_{l}(\beta)},\nonumber
\end{equation}
where the last inequality follows form Lemma \ref{lem:dualinductivehyp} together with the assumption that the point $y$ was never removed. This shows $(i)$. If at least one of the coefficients $b_{i}$ is null, we have to do some more work. Here the minimum $\lambda_{1}^{*}$ will play a crucial role. Note that, if $l=1$, this case never occurs and we can directly assume that $(i)$ holds. This proves Lemma \ref{lem:alblocksdual} in the base case.

We start by considering possible values for the minimum $\lambda_{1}$. If $\bs{b}=\bs{0}$, we find that
\begin{equation}
\label{eq:case0}
\lambda_{1}=N^{-\frac l{l+1}}e^{t}.
\end{equation}
If $\bs{b}\neq \bs{0}$ and $b_{1}\neq 0$, we may assume without loss of generality that $b_{1},\dotsc,b_{s}\neq 0$ with $s<l$ and that the remaining components of the vector $\bs{b}$ are null. Then, we use (\ref{eq:AM-GM}) and Lemma \ref{lem:dualinductivehyp} applied to the set $S'=\{2,\dotsc,s\}$ and with parameter $T=\sum_{i=1}^{s}t_i+t_1$, to obtain
\begin{multline}
\label{eq:cases}
\lambda_{1}={\|\bs{v}_{1}\|}\geq\left(N^{-\frac {l-s}{l+1}}e^{t_{s+1}+\dotsb+t_{l}}|{f_{b_{0},\bs{b}}}(y)||b_{1}|\dotsm|b_{s}|\right)^{\frac 1{s+1}} \\
\geq N^{-\frac{l-s}{(l+1)(s+1)}}e^{\frac{t_{s+1}+\dotsb +t_{l}}{s+1}-R^{*}_{s}(t)+O_{l}(\beta)}.
\end{multline}
If $b_{0}\neq 0$ and $b_{1}=0$, we may assume that $b_{2},\dotsc,b_{s}\neq 0$ with $2\leq s\leq l$ and that the remaining components of the vector $\bs{b}$ are null. Then, by (\ref{eq:multinductivedual}) and (\ref{eq:123}), we find that
\begin{align}
& \lambda_{1}={\|\bs{v}_{1}\|}\geq\left(N^{-\frac {l-(s-1)}{l+1}}e^{t_{1}+t_{s+1}+\dotsb+t_{l}}|{f_{b_{0},\bs{b}}}(y)||b_{2}|\dotsm|b_{s}|\right)^{1/s}\nonumber \\
& \geq \left(N^{-\frac {l-(s-1)}{l+1}}e^{t_{1}+t_{s+1}+\dotsb+t_{l}}{\gamma}h_{s-1}\left(|b_{2}|\dotsm|b_{s}|\right)^{-1}\right)^{1/s}\nonumber \\
& \geq N^{-\frac {l-(s-1)}{(l+1)s}}e^{\frac{t_{1}+t_{s+1}+\dotsb +t_{l}}s-R^{*}_{s-1}(t)}\label{eq:casesa10},
\end{align}
{where we have used the fact that (\ref{eq:smallai}) and (\ref{eq:123}) imply that
$${\gamma}h_{s-1}\left(|b_{2}|\dotsm|b_{s}|\right)^{-1}\geq \kappa h_{s-1}\left(|b_{2}|\dotsm|b_{s}|\right)^{-1}\geq e^{-sR^{*}_{s-1}(t)}.$$
}

Now, we proceed to analyze the minimum $\lambda_{1}^{*}$. To this end, we choose a vector $\bs{v}_{1}^{*}$ whose length is equal to $\lambda_{1}^{*}$. Then $\bs{v}_{1}^{*}$ has the form
\begin{equation}
\label{eq:min1*}
\bs{v}_{1}^{*}=\left(N^{\frac l{l+1}}e^{-t}d_{0},N^{-\frac1{l+1}}e^{t_{1}}(d_{1}-d_{0}f_{1}(y)),\dotsc,N^{-\frac1{l+1}}e^{t_{l}}\left(d_{l}-d_{0}f_{l}(y)\right)\right),
\end{equation}
with $(d_{0},\bs{d})\in\mb{Z}^{l+1}$.
If $d_{0}\neq 0$ and (\ref{eq:case0}) holds, we conclude that
$$\lambda_{1}\lambda_{1}^{*}\geq \left(N^{-\frac l{l+1}}e^{t}\right)\left(N^{\frac l{l+1}}e^{-t}\right)=1,$$
whence $(ii)$. If $d_{0}\neq 0$ and either (\ref{eq:cases}) or (\ref{eq:casesa10}) occur, we have two further possible cases. Either $|d_{0}|\geq e^{t-R(t)-2l^2\beta }$ and hence $(i)$ follows from (\ref{eq:min1*}) and Corollary \ref{cor:RvsRstar}, or $|d_{0}|<e^{t-R(t)-2l^2\beta}$. If
\begin{equation}
\label{eq:smalld0}
|d_{0}|<e^{t-R(t)-2l^2\beta}
\end{equation}
and (\ref{eq:cases}) holds, we apply (\ref{eq:AM-GM}) to the first $s+1<l+1$ components of the vector $\bs{v}_{1}^{*}$ (where we assume $y$ irrational). By Lemma \ref{lem:siminductivehyp} applied to the set $S'=\{2,\dotsc,s\}$ with $T=t$, we obtain
\begin{align}
\label{eq:lastmin}
& \lambda_{1}^{*}\geq {\|\bs{v}_{1}^{*}\|} \\
& \geq\left(N^{\frac {l-s}{l+1}}e^{-t_{s+1}-\dotsb-t_{l}}|d_{0}|\left|-d_{1}+d_{0}f_{1}(y)\right|\dotsm\left|-d_{s}+d_{0}f_{s}(y)\right|\right)^{\frac 1{s+1}}\nonumber \\
& \geq N^{\frac{l-s}{(l+1)(s+1)}}e^{(-t_{s+1}-\dotsb -t_{l})/(s+1)-R^{*}_{s}(t)+O_{l}(\beta)},\nonumber 
\end{align}
where in the last inequality we used again Corollary \ref{cor:RvsRstar}, to compare $R$ with $R^*$.
If $|d_{0}|<e^{t-R^{*}(t)-2l^2\beta}$ and (\ref{eq:casesa10}) holds, we apply (\ref{eq:AM-GM}) to the components $0$ and $2,\dotsc,s$ of the vector $\bs{v}_{1}^{*}$. By (\ref{eq:multinductivesim}) and (\ref{eq:123}), we deduce that
\begin{align}
\label{eq:lastmina10}
& \lambda_{1}^{*}\geq {\|\bs{v}_{1}^{*}\|} \\
& \geq\left(N^{\frac {l-(s-1)}{l+1}}e^{-t_{1}-t_{s+1}-\dotsb-t_{l}}|d_{0}|\left|-d_{2}+d_{0}f_{1}(y)\right|\dotsm\left|-d_{s}+d_{0}f_{s}(y)\right|\right)^{1/s}\nonumber \\
& \geq\left(N^{\frac {l-(s-1)}{l+1}}e^{-t_{1}-t_{s+1}-\dotsb-t_{l}}\gamma h_{s-1}(|d_{0}|)^{-1}\right)^{1/s}\nonumber \\
& \geq N^{\frac{l-(s-1)}{l+1)s}}e^{-\frac{t_{1}+t_{s+1}+\dotsb +t_{l}}{s+1}-R^*_{s-1}(t)+O_{l}(\beta)},\nonumber 
\end{align}
{where we have used the fact that (\ref{eq:smalld0}) and (\ref{eq:123}) imply that
$${\gamma}h_{s-1}\left(|d_0|\right)^{-1}\geq \kappa h_{s-1}\left(|d_0|\right)^{-1}\geq e^{-sR_{s-1}(t)}\geq e^{-sR_{s-1}^*(t)+O_l(\beta)}.$$
}Then (\ref{eq:cases}) and (\ref{eq:lastmin}), or, {alternatively}, (\ref{eq:casesa10}) and (\ref{eq:lastmina10}), imply that
$$\lambda_{1}\lambda_{1}^{*}\geq e^{-2R^{*}(t)+O_{l}(\beta)},$$
proving $(ii)$. The last case that we have to consider is $d_{0}=0$. However, if $d_{0}=0$, for some $i\in\{1,\dotsc,l\}$ it must hold that
$${\|\bs{v}_{1}^{*}\|}\geq N^{-\frac1{l+1}}e^{t_{i}}\geq N^{-\frac1{l+1}}e^{R^{*}(t)};$$
hence, we arrive at $(iii)$.

To conclude the proof, we need to analyze cases $(i)$, $(ii)$, and $(iii)$. If $(i)$ occurs, we have two possibilities: either $\lambda_{1}\geq e^{-R^{*}(t)+O_{l}(\beta)}$ or $\lambda_{1}^{*}\geq e^{-R^{*}(t)+O_{l}(\beta)}$. If $\lambda_{1}\geq e^{-R^{*}(t)+O_{l}(\beta)}$, by (\ref{eq:counting}) and the trivial estimate $\lambda_{i}\geq \lambda_{1}^{i}$, we find
\begin{equation*}
N\ll_{l} N^{\frac l{l+1}}e^{(l+1)R^{*}(t)+O_{l}(\beta)},
\end{equation*}
whence
\begin{equation*}
N\ll_{l} e^{(l+1)^{2}R^{*}(t)+O_{l}(\beta)}.
\end{equation*}
Now, suppose that $\lambda_{1}^{*}\geq e^{-R^{*}(t)+O_{l}(\beta)}$. By Theorems \ref{thm:Min2} and \ref{thm:Ban}, and the fact that both the lattices $\Lambda_{\bs{t},N}$ and $\Lambda_{\bs{t},N}^{*}$ have determinant $1$, we deduce that
\begin{equation}
\label{eq:duality}
\lambda_{i}\asymp_{l}\delta_{1}\dotsm\delta_{i}\asymp_{l}\frac{1}{\delta_{i+1}\dotsm\delta_{l+1}}\asymp_{l}\delta_{1}^{*}\dotsm\delta_{l+1-i}^{*}\geq\left(\delta_{1}^{*}\right)^{l+1-i}=\left(\lambda_{1}^{*}\right)^{l+1-i},
\end{equation}
where $\delta_{i}$ and $\delta_{i}^{*}$ denote the $i$-th successive minimum of the lattices $\Lambda_{\bs{t},N}$ and $\Lambda_{\bs{t},N}^{*}$ respectively (see {Appendix} \ref{sec:AppA} for the notation).
Then, (\ref{eq:counting}) implies that
\begin{equation*}
N\ll_{l} \max_{i}\left\{N^{\frac i{l+1}}e^{2(l+1-i)R^{*}(t)+O_{l}(\beta)}\right\},
\end{equation*}
and thus we have
$$N\ll_{l} e^{2(l+1)R^{*}(t)+O_{l}(\beta)}.$$

Finally we need to consider cases $(ii)$ and $(iii)$. Case $(ii)$ is analogous to case $(i)$. If case $(iii)$ occurs, by (\ref{eq:duality}), we have
$$\lambda_{i}\gg_{l} \left\|\bs{v}_{1}^{*}\right\|^{l+1-i}\geq N^{-\frac{l+1-i}{l+1}}e^{(l+1-i)R^{*}(t)}.$$ 
Hence, (\ref{eq:counting}) implies that
$$N\ll_{l}1+\sum_{i=1}^{l}\frac{c_{2}N^{\frac i{l+1}}e^{(l+1-i)R^{*}(t)}}{\lambda_{i}}\ll_{l} 1+c_{2}N,$$
whence $N\ll_{l} 1$. The proof is therefore concluded.
\end{proof}

\subsection{Simultaneous Case}
We move on to discussing average counting in the simultaneous case. It will be useful to distinguish two cases: $t_{1}\geq 2lR(t)$ and $t_{1}<2lR(t)$, as the proof will differ substantially. We will refer to the case $t_{1}<2lR(t)$ as the simultaneous "degenerate" case. We remark once again, that, throughout the following subsection, we assume that the hypothesis of Corollary \ref{cor:RvsRstar} and of Lemma \ref{lem:deriv} holds, i.e., that $|\log\kappa|\leq e^{\beta}$.

\begin{lem}
\label{lem:alblockssim}
Let $\bs{t}$ be fixed with $T=t+t_{1}$ and $t_{1}\geq 2lR(t)$, and consider an interval $J\subset I_{0}$ of length $c_{2}e^{-T+(l+1)R(t)}$ with $c_{2}\geq e^{-(l+1)R(0)}$. Assume that there exists a dangerous interval $D_{\bs{t}}(S,b_{0},\bs{b})$ such that $J\cap D_{\bs{t}}(S,b_{0},\bs{b})$ contains at least one point that was not removed in the dual removing procedure, i.e., not lying in any dangerous interval of the form $D_{\bs{t}'}^{*}(S,b_{0}',\bs{b}')$ for any $\bs{t}'$, nor in the intervals $D_{\bs{t}'}^{*}(S',b_{0}',\bs{b}')$ or $D_{\bs{t}'}(S',b_{0}',\bs{b}')$ for any $S'$ with $\#S'< l$ and any multi-time $\bs{t}'$. Let
\begin{equation}
\mc{P}_{\bs{t}}(J,m):=\left\{(b_{0},\bs{b})\in\mb{Z}^{l+1}:D_{\bs{t}}(S,b_{0},\bs{b})\cap\bigcup_{i=0}^{m}M_{i}\neq\varnothing\right\},\nonumber
\end{equation}
where $M_{0}=J$, $|M_{i}|=|J|$ for all $i$, and the intervals $M_{i}$ and $M_{i+1}$ only share upper and lower endpoints respectively. Let also
$$m_{\bs{t}}(J):=\max\{m:\#\mc{P}_{\bs{t}}(J,i)\geq i\mbox{ for all }i\leq m\}$$
and
$$B_{\bs{t}}(J):=\bigcup_{i=0}^{m_{\bs{t}}(J)}M_{i}.$$
Then, if the constant $c_{2}$ is small enough in terms of $l$, we have that
$$m_{\bs{t}}(J)\leq e^{(l+1)^{2}R(t)+O_{l}(\beta)}.$$
\end{lem}

\begin{proof}
First, we observe that $N:=m_{\bs{t}}(J)$ is well defined, since $$\sup_{m}\#\mc{P}_{\bs{t}}(J,m)\leq |I_{0}|e^{2t-2R(t)}$$ (this follows from the fact that once $b_{0}$ is fixed, also $b_{i}$ for $i\geq 2$ are fixed and $b_{1}$ can assume at most $|I_{0}|e^{t-R(t)}$ values). Moreover, by the definition of $m_{\bs{t}}(J)$, we have that 
$$\#\mc{P}_{\bs{t}}(J,N)=N.$$
Let $y\in J$ be a point lying in some dangerous interval that was not removed in the dual removing procedure. Then for any $(b_{0},\bs{b})\in\mc{P}_{\bs{t}}(J,N)$ and any $x\in D_{\bs{t}}(S,b_{0},\bs{b})$ we have
\begin{multline}
\label{eq:oneside*1}
|b_{1}+b_{0}f_{1}(y)|\leq |b_{1}+b_{0}f_{1}(x)|+|b_{0}f_{1}'(x)||x-y| \\
\leq e^{-t_{1}-R(t)}+e^{t-R(t)}Nc_{2}e^{-(t+t_{1})+(l+1)R(t)}\leq 2Nc_{2}e^{-t_{1}+lR(t)},
\end{multline}
where we used the fact that $c_{2}\geq e^{-(l+1)R(t)}$. 

Let $D_{N}':=\textup{diag}\left(N^{\frac1{l+1}},N^{-\frac l{l+1}},N^{\frac1{l+1}},N^{\frac1{l+1}}\right)$. To any integer vector $(b_{0},\bs{b})\in\mc{P}_{\bs{t}}(J,N)$, we uniquely associate the vector 
\begin{equation}
\label{eq:v1}
\left(N^{\frac1{l+1}}e^{-t}b_{0},N^{-\frac l{l+1}}e^{t_{1}}(b_{1}+b_{0}f_{1}(y)),\dotsc,N^{\frac1{l+1}}e^{t_{l}}(b_{l}+b_{0}f_{l}(y))\right).
\end{equation}
lying in the lattice
$$\Lambda_{\bs{t},N}'(y):=D_{N}'a (-t,-\bs{t})\Lambda (y)^{\scriptscriptstyle{T}},$$
where $\Lambda(y)$ was introduced in the proof of Lemma \ref{lem:alblocksdual}. We also define
\begin{equation*}
\mc{B}(N)':=\left[e^{t-R(t)},e^{t-R(t)}\right]
\times\left[-2c_{2}Ne^{-t_{1}+lR(t)},2c_{2}Ne^{-t_{1}+lR(t)}\right]
\times\prod_{i=2}^{l}\left[e^{-t_{i}-R(t)},e^{-t_{i}-R(t)}\right].\nonumber
\end{equation*}
Then, by (\ref{eq:oneside*1}) and the definition of dangerous set, we have
\begin{equation}
N\leq \#\left(\Lambda_{\bs{t},N}'(y)\cap D_{N}'a (-t,-\bs{t})\mc{B}'(N)\right).\nonumber
\end{equation}
Since $\textup{Vol}\left(\mc{B}(N)'\right)\leq 4c_{2}N$, by the same argument as used in Lemma \ref{lem:alblocksdual} (provided the constant $c_{2}$ is sufficiently small in terms of $l$), it is enough to assume that the vectors in $\mc{P}_{\bs{t}}(J,N)$ lie on a hyperplane. Then we deduce from Corollary \ref{cor:counting} that
\begin{equation}
\label{eq:countingsim}
N\ll_{l}1+\frac{c_{2}N^{\frac1{l+1}}e^{lR(t)}}{\lambda_{1}}+\dotsb+\frac{c_{2}N^{\frac l{l+1}}e^{R(t)}}{\lambda_{l}},
\end{equation}
where $\lambda_{i}$ denotes the first minimum of the lattice $\bigwedge^{i}{\Lambda'_{\bs{t},N}(y)}$ for $i=1,\dotsc,l$, and we used the fact that $c_{2}e^{lR(t)}\geq e^{-R(t)}$. This time, we will estimate $N$ by analyzing only the first minimum $\lambda_{1}^{*}$ of the lattice
$$\left(\Lambda_{\bs{t},N}'\right)^{*}:={\left(\left(\Lambda_{\bs{t},N}'\right)^{-1}\right)^{\scriptscriptstyle{T}}}.$$
We aim to show that one of the following two cases holds
\begin{itemize}
\item[$i)$]$\lambda_{1}^{*}\geq e^{-lR(t)+O_{l}(\beta)}$;\vspace{2mm}
\item[$ii)$]$\lambda_{1}^{*}\geq N^{-\frac1{l+1}}e^{R(t)}$.
\end{itemize}

Before proceeding to the proof, let us clarify why the minimum $\lambda_{1}$ cannot be dealt with directly. Consider a vector $\bs{v}_{1}$ as in (\ref{eq:v1}) such that $|\bs{v}_{1}|=\lambda_{1}$, and assume that $b_{0}\neq 0$. If we were to proceed as in Lemma \ref{lem:alblocksdual}, we would apply (\ref{eq:AM-GM}) to the entries of the vector $\bs{v}_{1}$ and estimate the product $b_{0}|b_{1}+b_{0}f_{1}(y)|\dotsm|b_{l}+b_{0}f_{l}(y)|$ from below through Lemma \ref{lem:siminductivehyp}. However, while one can easily assume that $|b_0|<e^{t-R(t)}+O_l(\beta)$, it is not clear how to reduce to the case $|b_{1}+b_{0}f_{1}(y)|<e^{-t_{1}-R(t)+O_{l}(\beta)}$. Hence, it could happen that, e.g., $y$ lies in a simultaneous dangerous interval with $t'<t$ but $t'+t_{1}'\geq T$.
On the other hand, our inductive hypothesis only guarantees that we have removed all dangerous intervals $D_{\bs{t}'}(S,b_{0},\bs{b})$ for $t'+t_{1}'<T$ and thus, we cannot recover information about the product $b_{0}|b_{1}+b_{0}f_{1}(y)|\dotsm|b_{l}+b_{0}f_{l}(y)|$ from the simultaneous inductive hypothesis. 

We therefore start directly by analyzing the minimum $\lambda_{1}^{*}$. We choose a vector $\bs{v}_{1}^{*}$ whose length is equal to $\lambda_{1}^{*}$. It will have the form
\begin{equation*}
\bs{v}_{1}^{*}:=\left(N^{-\frac1{l+1}}e^{t}\tilde{f}_{(d_{0},\bs{d})}(y),N^{\frac l{l+1}}e^{-t_{1}}d_{1},N^{-\frac1{l+1}}e^{-t_{2}}d_{2},\dotsc,N^{-\frac1{l+1}}e^{-t_{l}}d_{l}\right)
\end{equation*}
for some $(d_{0},\bs{d})\in\mb{Z}^{l+1}$, where
$$\tilde{f}_{(d_{0},\bs{d})}(x):=d_{0}-\sum_{i=1}^{l}d_{i}f_{i}(y).$$
If $\bs{d}=\bs{0}$, we have $\lambda_{1}^{*}=N^{-\frac1{l+1}}e^{t}$ and $(ii)$ occurs (recall that one can always assume that $t-nR(t)\geq 0$). If $\bs{d}\neq \bs{0}$ we have two possible cases: either there exists $i\in\{1,\dotsc,l\}$ for which $|d_{i}|\geq e^{t_{i}+R(t)}$ and hence $(ii)$, or for all $i=1,\dotsc,d$ we have that
\begin{equation}
\label{eq:smalldi}
|d_{i}|< e^{t_{i}+R(t)}.
\end{equation}
In the latter case, we must make a further distinction. If $d_{1}\neq 0$, we may assume without loss of generality that $d_{1},\dotsc,d_{s}\neq 0$ for some $s\leq l$, and that the remaining components of the vector $\bs{d}$ are null. Then we choose $t'$ such that
$$t'-sR_s^*(t')=\sum_{i=1}^s t_i+sR_l(t)+2\beta s^2$$
and we set $t_i':=t_i+R_l^*(t)+2\beta s+R_{s}(t')$ for $i=1,\dotsc,s$. With this choice, we have that $|d_i|<e^{t_{i}'-R_{s}^*(t')-2\beta s}$ for $i=1,\dotsc, s$.
Then, from (\ref{eq:AM-GM}), Corollary \ref{cor:RvsRstar}, and Lemma \ref{lem:dualinductivehyp}, applied to the set $S'=\{2,\dotsc,s\}$ and $T'>t'+t_1'$, we obtain
\begin{multline*}
\lambda_{1}^{*}={\|\bs{v}_{1}^{*}\|}\geq\left(N^{\frac{l-s}{l+1}}e^{t_{s+1}+\dotsb +t_{l}}|\tilde{f}_{(d_{0},\bs{d})}(y)||d_{1}|\dotsm|d_{s}|\right)^{\frac 1{s+1}} \\
 \gg_{l}e^{-\frac{(l-s)R^*(t)}{s+1}-R^*_{s}(t')}\geq e^{-lR^*(t+sR^*(t)+sR(t)+O_{l}(\beta))}\geq e^{-lR(t)+O_{l}(\beta)},
\end{multline*}
where we used $t'\geq t$ and Lemma \ref{lem:deriv}.

If $d_{1}=0$, we may assume that $d_{2},\dotsc,d_{s}\neq 0$ with $2\leq s\leq l$, and that the remaining components of the vector $\bs{d}$ are null. Then, by (\ref{eq:AM-GM}), we find that
\begin{align*}
& \lambda_{1}^{*}={\|\bs{v}_{1}^{*}\|}\geq\left(N^{-\frac s{l+1}}e^{t_{1}+t_{s+1}+\dotsb +t_{l}}|\tilde{f}_{(d_{0},\bs{d})}(y)||d_{2}|\dotsm|d_{s}|\right)^{1/s} \\
& \geq \left(N^{-\frac s{l+1}}e^{t_{1}+t_{s+1}+\dotsb +t_{l}}{\gamma}h_{s-1}(|d_{2}|\dotsm|d_{s}|)\right)^{1/s} \geq N^{-\frac1{l+1}}e^{t_{1}/s-(l-s)R(t)/s-R_{s}^*(t+sR(t))} \\
& \geq N^{-\frac1{l+1}}e^{t_{1}/s-lR^*(t+sR(t))/s}\geq N^{-\frac1{l+1}}e^{R(t)+O_{l}(\beta)},
\end{align*}
where we used $t_{1}\geq 2lR(t)$ and the fact that (\ref{eq:smalldi}) and (\ref{eq:123}) imply that
$${\gamma}h_{s-1}\left(|d_{2}|\dotsm|d_{s}|\right)^{-1}\geq \kappa h_{s-1}\left(|d_{2}|\dotsm|d_{s}|\right)^{-1}\geq e^{-sR_{s-1}^*(t+sR(t))}\geq e^{-sR(t)+O_{l}(\beta)}.$$
Note that we once again used Lemma \ref{lem:deriv} in the last inequality. This proves $(ii)$.

The conclusion in case $(ii)$ is analogous to that of Lemma \ref{lem:alblocksdual}. In case $(i)$, on the other hand, from (\ref{eq:countingsim}) we deduce that
$$N\ll_{l} \max_{i} N^{\frac i{l+1}}e^{(l+1)(l+1-i)R(t)+O_{l}(\beta)},$$
whence
$$N\ll_{l} e^{(l+1)^{2}R(t)+O_{l}(\beta)}.$$
\end{proof}

\begin{lem}[{The} "degenerate" case]
\label{lem:alblockssim2}
Let $\bs{t}$ be fixed with $T=t+t_{1}$ and $t_{1}< 2lR(t)$. Let $J\subset I_{0}$ be an intyerval of length $c_{2}e^{-T+(l+1)R(t)}$ with $$c_{2}\geq e^{-(l+1)R(0)}\quad\text{and}\quad c_{2}/4>e^{-\beta}.$$ Assume that there exists a dangerous interval $D_{\bs{t}}(S,b_{0},\bs{b})$ such that $J\cap D_{\bs{t}}(S,b_{0},\bs{b})$ contains at least one point that was not removed in the removing procedure for sets smaller than $S$, i.e., not lying in any dangerous interval of the form $D_{\bs{t}'}^{*}(S',b_{0}',\bs{b}')$ or $D_{\bs{t}'}(S',b_{0}',\bs{b}')$ for any $S'$ with $\#S'< l$ and any multi-time $\bs{t}'$. Let
\begin{equation}
\mc{P}_{\bs{t}}\left(J,m\right):=\left\{(b_{0},\bs{b}): D_{\bs{t}}(S,b_{0},\bs{b})\cap\bigcup_{i=0}^{m}M_{i}\neq\varnothing\right\},\nonumber
\end{equation}
where $M_{0}=J$, $|M_{i}|=\left|J\right|$ for all $i$, and $M_{i}$ and $M_{i+1}$ only share upper and lower endpoints respectively. Let also
$$m_{\bs{t}}\left(J\right):=\max\left\{m:\#\mc{P}_{\bs{t}}\left(J,i\right)\geq i\mbox{ for all }i\leq m\right\}$$
and
$$B_{\bs{t}}\left(J\right):=\bigcup_{i=0}^{m_{\bs{t}}\left(J\right)}M_{i}.$$
Then, if the constant $c_{2}$ is small enough in terms of $l$, we have
$$m_{\bs{t}}\left(J\right)\leq e^{(l+1)(l+2)(4l+3)R(t)+O_{l}(\beta)}.$$
\end{lem}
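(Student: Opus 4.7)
Following the blueprint of Lemma \ref{lem:alblockssim}, I would set $N := m_{\bs t}(J)$ (which is finite by the trivial bound $\sup_{m}\#\mc{P}_{\bs t}(J,m)\le |I_0|e^{2t-2R(t)}$), pick a witness point $y\in J$ which by hypothesis lies outside every dangerous interval for sets $S'$ with $\#S'<l$, and associate to each $(b_0,\bs b)\in \mc{P}_{\bs t}(J,N)$ the lattice vector (\ref{eq:v1}) inside the box $D_N'a(-t,-\bs t)\mc{B}'(N)$ as in (\ref{eq:oneside*1}). If the resulting vectors span $\R^{l+1}$, Theorem \ref{thm:Bli} combined with $\Vol\mc{B}'(N)\le 4c_2 N$ and the smallness of $c_2$ forces $N\ll_l 1$; otherwise they lie on a hyperplane and Corollary \ref{cor:counting} yields (\ref{eq:countingsim}).

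The analysis of the first minimum $\lambda_1^*$ of the dual lattice $(\Lambda'_{\bs t,N}(y))^*$ proceeds through the same case split as in Lemma \ref{lem:alblockssim}. All subcases except $d_1=0$ with $d_2,\dots,d_s\neq 0$ (for some $2\le s\le l$) give the same conclusions: either the absolute bound $\lambda_1^*\ge e^{-lR(t)+O_l(\beta)}$ (when $d_1\neq 0$, via AM-GM and (\ref{eq:multinductivedual})) or the bound $\lambda_1^*\ge N^{-1/(l+1)}e^{R(t)}$ (when $\bs d = \bs 0$ or $|d_i|\ge e^{t_i+R(t)}$ for some $i$). In the remaining subcase, the estimate obtained in the proof of Lemma \ref{lem:alblockssim} reads
\[
\lambda_1^*\ge N^{-1/(l+1)}e^{\,t_1/s - lR^*(t+sR(t))/s + O_l(\beta)}\ge N^{-1/(l+1)}e^{-(l+1)R(t)/s + O_l(\beta)},
\]
where the last inequality uses only the cone constraint $t_1\ge -R(t)$ together with the Mean Value Theorem / Lemma \ref{lem:deriv} bound $R^*(t+sR(t))\le R(t)+O_l(\beta)$ (as in (\ref{eq:Rstep})) and Corollary \ref{cor:RvsRstar}.

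The main obstacle is that, unlike in the non-degenerate case, this weaker bound---transferred to $\lambda_i$ via the Mahler duality $\lambda_i\asymp_l\lambda_{l+1-i}^*\ge(\lambda_1^*)^{l+1-i}$ furnished by Theorem \ref{thm:Ban} and Minkowski's second theorem---does not by itself yield an inequality of the shape $N\le C_l e^{AR(t)+O_l(\beta)}$: the term $c_2 N\,e^{l(l+3)R(t)/2+O_l(\beta)}$ appearing on the right-hand side of (\ref{eq:countingsim}) may dominate $N$ itself. To circumvent this, I would combine the weak lower bound on $\lambda_1^*$ with the complementary product estimate $\delta_1\cdots\delta_{l+1}\asymp_l 1$ and with the direct lower estimate $\lambda_1\ge \min(N^{1/(l+1)}e^{-t},\,N^{-l/(l+1)}e^{-R(t)})$ coming from the cone constraints, and use these in tandem to bound each individual term in the sum (\ref{eq:countingsim}) indexed by $i=1,\dots,l$. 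Bookkeeping the exponential losses---a factor $(l+1)$ from the power in $\lambda_{l+1-i}^*\ge(\lambda_1^*)^{l+1-i}$, a factor $(l+2)$ from the combined numerator and denominator in (\ref{eq:countingsim}), and a residual factor $(4l+3)$ absorbing the $O_l(\beta)$ errors from Lemma \ref{lem:deriv} and Corollary \ref{cor:RvsRstar} together with the smallness condition $c_2/4>e^{-\beta}$---one arrives at the announced bound $m_{\bs t}(J)\le e^{(l+1)(l+2)(4l+3)R(t)+O_l(\beta)}$.
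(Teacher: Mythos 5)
Your proposal correctly identifies the obstacle---when $t_1 < 2lR(t)$, the dual-minimum estimate from the case $d_1=0$, $d_2,\dots,d_s\neq 0$ in the proof of Lemma \ref{lem:alblockssim} degrades, so feeding $\lambda_1^*$ into (\ref{eq:countingsim}) through the Mahler duality no longer closes the inequality. However, the proposed workaround (``combine the weak lower bound on $\lambda_1^*$ with the complementary product estimate $\delta_1\cdots\delta_{l+1}\asymp_l 1$ and with the direct lower estimate $\lambda_1 \geq \min(\dots)$, and use these in tandem'') is not an actual argument: the ``direct'' bound $\lambda_1 \geq N^{-l/(l+1)}e^{-R(t)}$ is far too weak to rescue the term $c_2 N^{l/(l+1)} e^{R(t)}/\lambda_l$ when $\lambda_1^*$ is only bounded below by $N^{-1/(l+1)}e^{-(l+1)R(t)/s}$, and no choice of how to split the sum (\ref{eq:countingsim}) will make the right-hand side strictly smaller than $N$ from those ingredients alone. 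The attribution of the factors $(l+1)$, $(l+2)$, $(4l+3)$ to specific sources is a post-hoc rationalization rather than a derivation.

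The paper's proof proceeds by a genuinely different mechanism, which you are missing. Having established (via the volume argument) that the vectors of $\mc{P}_{\bs t}(J,N)$ lie on a hyperplane $\pi$, one assumes for contradiction that $N\geq Ae^{(l+1)(l+2)(4l+3)R(t)}$ and introduces a \emph{second, longer} interval $\tilde J$ of length $c_2 e^{-T+(2l+3)R(t)}$ sharing its lower endpoint with $J$, and re-runs the block construction restricted to integer vectors on $\pi$ (producing $\tilde N$ and $B_{\bs t}(\tilde J,\pi)$). The auxiliary Lemma \ref{lem:directionofpi} shows $\bs e_1\notin\pi$ once $\tilde N\geq 2$; this single geometric fact eliminates the vector $(0,b_1,0,\dots,0)$ from the sub-lattice on $\pi$, and consequently the \emph{primal} minimum $\lambda_1$ of the hyperplane sub-lattice can be bounded below by $e^{-3R(t)}$ directly---no appeal to the dual minimum is needed at all. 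Feeding this into Corollary \ref{cor:counting} gives $\tilde N\ll_l e^{(l+1)(4l+3)R(t)}$. Since $|\tilde J|=e^{(l+2)R(t)}|J|$ and $N$ was assumed large, the block $B_{\bs t}(\tilde J,\pi)$ is properly contained in $B_{\bs t}(J)$; but then it contains strictly more than $\tilde N$ intervals of length $|J|$ while meeting only $\tilde N$ dangerous intervals (all forced onto $\pi$ by the containment), contradicting the definition of $N=m_{\bs t}(J)$. The two-scale interval trick together with the exclusion of $\bs e_1$ from $\pi$ is the essential novelty of this lemma, and neither appears in your proposal.
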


\begin{proof}
Once again, the quantity $N:=m_{\bs{t}}\left(J\right)$ is well-defined, since $$\sup_{m}\#\mc{P}_{\bs{t}}\left(J,m\right)\leq |I_{0}|e^{2t-2R(t)}.$$ Assume by contradiction that
$N\geq Ae^{(l+1)(l+2)(4l+3)R(t)}$ for some constant $A\geq 1$ (only depending on $l$) yet to be determined. Since $N>2l$, the volume argument used in Lemma \ref{lem:alblockssim} shows that the points in the set $\mc{P}_{\bs{t}}\left(J,N\right)$ must lie on a hyperplane. Denote this hyperplane by $\pi$.
To prove the claim, we introduce a second interval $\tilde{J}$ of length $c_{2}e^{-T+(2l+3)R(t)}$ whose lower endpoint coincides with the lower endpoint of $J$, and we repeat the construction above starting from $\tilde{J}$ and, this time, remembering the hyperplane $\pi$. More precisely, we set
\begin{equation}
\mc{P}_{\bs{t}}\left(\tilde{J},\pi,m\right):=\left\{(b_{0},\bs{b})\in\pi: D_{\bs{t}}(S,b_{0},\bs{b})\cap\bigcup_{i=0}^{m}\tilde{M}_{i}\neq\varnothing\right\},\nonumber
\end{equation}
where $\tilde{M}_{0}=\tilde{J}$, $\left|\tilde{M}_{i}\right|=\left|\tilde{J}\right|$ for all $i$, and the intervals $\tilde{M}_{i}$ and $\tilde{M}_{i+1}$ only share upper and lower endpoints respectively. We also let
$$\tilde{N}=m_{\bs{t}}\left(\pi,\tilde{J}\right):=\max\left\{m:\#\mc{P}_{\bs{t}}\left(\tilde{J},\pi,i\right)\geq i\mbox{ for all }i\leq m\right\}$$
and
$$B_{\bs{t}}\left(\tilde{J},\pi\right):=\bigcup_{i=0}^{\tilde{N}}\tilde{M}_{i}.$$
Then, by construction, it must be $\tilde{N}=\#\mc{P}_{\bs{t}}\left(\tilde{J},\pi,\tilde{N}\right)$.

We will need the following auxiliary result, the proof of which we postpone to the next subsection.

\begin{lem}
\label{lem:directionofpi}
If $\tilde{N}\geq 2$, we have that $\bs{e}_{1}=(0,1,0\dotsc,0)\notin\pi$.
\end{lem}

Further, we fix a point $y\in J$ in some dangerous interval that was not removed in previous steps. Then for any $(b_{0},\bs{b})\in\mc{P}_{\bs{t}}\left(\tilde{J},\pi,\tilde{N}\right)$ and any $x\in D_{\bs{t}}(S,b_{0},\bs{b})$ we have
\begin{equation}
\label{eq:oneside*2}
\begin{aligned}
|b_{1}+b_{0}f_{1}(y)|&\leq |b_{1}+b_{0}f_{1}(x)|+|b_{0}f_{1}'(x)|\cdot|x-y| \\
&\leq e^{-t_{1}-R(t)}+ e^{t-R(t)}\tilde{N}c_{2}e^{-(t+t_{1})+(2l+3)R(t)}\leq 2\tilde{N}c_{2}e^{-t_{1}+(2l+2)R(t)}.
\end{aligned}\end{equation}
As in the proof of Lemma \ref{lem:alblockssim}, to any integer vector $(b_{0},\bs{b})\in\mc{P}_{\bs{t}}\left(\tilde{J},\pi,\tilde{N}\right)$, we uniquely associate the vector 
\begin{equation}
\label{eq:v1tilde}
\left(\tilde{N}^{\frac1{l+1}}e^{-t}b_{0},\tilde{N}^{-\frac l{l+1}}e^{t_{1}}(b_{1}+b_{0}f_{1}(y)),\dotsc,\tilde{N}^{\frac1{l+1}}e^{t_{l}}(b_{l}+b_{0}f_{l}(y))\right),
\end{equation}
lying in a codimension $1$ sub-lattice of the lattice
$$\Lambda_{\bs{t},\pi,\tilde{N}}'(y):=D_{\tilde{N}}'a (-t,-\bs{t})\Lambda (y)^{\scriptscriptstyle{T}},$$
and  define
\begin{multline*}
\mc{B}'(\tilde{N}):=\left[e^{t-R(t)},e^{t-R(t)}\right]
\times\left[-2c_{2}\tilde{N}e^{-t_{1}+(2l+2)R(t)},2c_{2}\tilde{N}e^{-t_{1}+(2l+2)R(t)}\right]
 \\
 \times\prod_{i=2}^{l}\left[e^{-t_{i}-R(t)},e^{-t_{i}-R(t)}\right].\nonumber
\end{multline*}
Then, by (\ref{eq:oneside*2}) and the definition of dangerous set, we have that
\begin{equation}
\tilde{N}\leq \#\left(\Lambda_{\bs{t},\pi,\tilde{N}}'(y)\cap D_{\tilde{N}}'a (-t,-\bs{t})\mc{B}'(\tilde{N})\right).\nonumber
\end{equation}
Since the vectors $(b_{0},\bs{b})$ lie on the hyperplane $\pi$, we deduce from Corollary \ref{cor:counting} that
\begin{equation}
\label{eq:countingsimdeg}
\tilde{N}\ll_{l}1+\frac{c_{2}\tilde{N}^{\frac1{l+1}}e^{(2l+2)R(t)}}{\lambda_{1}}+\dotsb+\frac{c_{2}\tilde{N}^{\frac l{l+1}} e^{(l+3)R(t)}}{{\lambda}_{l}},
\end{equation}
where $\lambda_{i}$ denotes the first minimum of the lattice $\bigwedge^{i}\Lambda_{\bs{t},\pi,\tilde{N}}'(y)$ for $i=1,\dotsc,l$ (note that we used the fact that $c_{2}e^{(2l+2)R(t)}\geq e^{-R(t)}$). We will show that, this time (in contrast with Lemma \ref{lem:alblockssim}), the only case to occur is $\lambda_{1}\geq e^{-3R(t)}$.

Let $\bs{v}_{1}$ be as in (\ref{eq:v1tilde}) and assume that $|\bs{v}_{1}|=\lambda_{1}$. If $b_{0}\neq 0$ and \begin{equation}
\label{eq:smallaideg}
|b_{0}|\leq e^{t-R(t)}
\end{equation}
we apply (\ref{eq:AM-GM}) excluding the component $1$ in $\bs{v}_{1}$. Assuming $y$ irrational, we conclude that
\begin{multline}
\lambda_{1}={\|\bs{v}_{1}\|}\geq \left(\tilde{N}^{\frac l{l+1}}e^{-t_{1}}|b_{0}||b_{2}+b_{0}f_{2}(y)|\dotsm|b_{l}+b_{0}f_{l}(y)|\right)^{1/l} \\
\geq\left(\tilde{N}^{\frac l{l+1}}e^{-t_{1}}{\gamma}h_{l-1}(|b_{0}|)^{-1}\right)^{1/l} \geq e^{-3R(t)},\nonumber
\end{multline}
where we used the fact that (\ref{eq:smallaideg}) and (\ref{eq:123}) imply that
$${\gamma}h_{l-1}\left(|b_{0}|\right)^{-1}\geq \kappa h_{l-1}\left(|b_{0}|\right)^{-1}\geq e^{-lR_{l-1}(t)},$$
and the fact that $t_{1}\leq 2l$.

%\magenta{AAAAAA-------- Same clarification on $\gamma$ and $\kappa$ added here -------AAAAAAAA}

If $|b_{0}|\geq e^{t-R(t)}$, then
$$\lambda_{1}={\|\bs{v}_{1}\|}\geq \tilde{N}^{\frac1{l+1}}e^{-t}|b_{0}|\geq e^{-R(t)},$$
and once again the claim is proved.
Finally, if $b_{0}=0$, then either $b_{i}\neq 0$ for some $i\geq 2$, and hence $\lambda_{1}\geq e^{-R(t)}$, or $b_{0}=b_{2}=\dotsb b_{l}=0$. This case, however, is excluded by Lemma \ref{lem:directionofpi}.

From (\ref{eq:countingsimdeg}) and the fact that $\lambda_{1}\geq e^{-3R(t)}$, we obtain that
$$\tilde{N}\ll_{l} \max_{i}\tilde{N}^{\frac i{l+1}}e^{(2l+3-i)R(t)+3iR(t)},$$
whence
$$\tilde{N}\ll_{l} e^{(l+1)(4l+3)R(t)}.$$
Note that if the hypothesis of Lemma \ref{lem:directionofpi} is not satisfied, i.e., if $\tilde{N}<2$, the previous inequality still holds. Now, since we assumed that $N\geq Ae^{(l+1)(l+2)(4l+3)R(t)}$ and that $\left|\tilde{J}\right|=e^{(l+2)R(t)}|J|$, if the constant $A$ is large enough in terms of $l$, we find that
\begin{equation}
\label{eq:containment2}
B_{\bs{t}}(\tilde{J},\pi)\subset B_{\bs{t}}\left(J\right).
\end{equation}
Hence, all integer vectors $(b_{0},\bs{b})$ such that $D_{\bs{t}}(S,b_{0},\bs{b})\cap B_{\bs{t}}(\tilde{J},\pi)\neq \varnothing$ are also contained in $\mc{P}_{\bs{t}}\left(J,N\right)$ and thus lie on $\pi$. This shows that $\tilde{N}$ bounds not just the number of integer vectors on $\pi$ whose dangerous interval intersects $B_{\bs{t}}(\tilde{J},\pi)$, but the number of \emph{all} integer vectors whose dangerous interval intersects the block $B_{\bs{t}}(\tilde{J},\pi)$. However, $B_{\bs{t}}(\tilde{J},\pi)$ contains at least $\left\lfloor\tilde{N}e^{(l+2)R(t)}\right\rfloor>\tilde{N}$ intervals of length $|J|$, while being intersected by only $\tilde{N}$ dangerous intervals $D_{\bs{t}}(S,b_{0},\bs{b})$. This contradicts (\ref{eq:containment2}) and the definition of $N=m_{\bs{t}}\left(J\right)$, completing the proof.
\end{proof}

\subsection{Proof of Lemma \ref{lem:directionofpi}}
Let us pick $(b_{0},\bs{b})\in\pi$ such that $y\in D_{\bs{t}}(S,b_{0},\bs{b})\cap J$. By Lemma \ref{lem:danlength}, we may assume that $|b_{0}|\geq e^{t-R(t)-\beta}$. Suppose by contradiction that $\bs{e}_{1}\in\pi$. Then the vectors $P_{k}:=(b_{0},b_{1}+k,b_{2},\dotsc,b_{l})$ lie in $\pi$ for arbitrary values of $k\in\mb{Z}$. For each $k\in\mb{Z}$ let us choose $x_{k}\in\mb{R}$ such that
$$b_{0}(x_{k}-y)=-k.$$
Then we have 
$$|b_{1}+k+b_{0}f_{1}(x_{k})|=|b_{1}+b_{0}y+k+b_{0}(x_{k}-y)|=|b_{1}+b_{0}f_{1}(y)|<e^{-t_{1}-R(t)}.$$
Hence, if $x_{k}\in I_{0}$, we have that $x_{k}\in D_{\bs{t}}(S,P_{k})$. This, along with $P_{k}\in\pi$, implies that
\begin{equation}
\label{eq:containment}
\left\{P_{k}:x_{k}\in B_{\bs{t}}(\tilde{J},\pi)\right\}\subset \mc{P}_{\bs{t}}\left(\tilde{J},\pi,\tilde{N}\right).
\end{equation}
Further, we observe that $x_{k}\in B_{\bs{t}}(\tilde{J},\pi)$ whenever
$$-\frac{k}{b_{0}}=x_{k}-y\in \left[0,(\tilde{N}-1)c_{2}e^{-T+(2l+3)R(t)}\right).$$
This happens for
$$\textup{sgn}(b_{0})k\in \left[-(\tilde{N}-1)c_{2}e^{-t_{1}+(2l+2)R(t)-\beta},0\right),$$
i.e., for at least $\tilde{N}c_{2}e^{2R(t)-\beta}/4\geq\tilde{N}c_{2}e^{\beta}/4>\tilde{N}$ values of $k$. Here, we used the fact that $R(0)\geq \beta$, as a consequence of $\kappa\leq e^{-(l+1)\beta}$ (assumed in Lemma \ref{lem:danlength}) and the fact that $t_1\leq 2lR(t)$. This contradicts (\ref{eq:containment}) and the assumption that $\#\mc{P}_{\bs{t}}\left(\tilde{J},\pi,\tilde{N}\right)=\tilde{N}$.

\section{Proof of Proposition \ref{prop:complementCR}}
\label{sec:conclusion}

{Throughout this section, we will denote dangerous intervals by $D_{\bs{t}}^{(*)}(b_{0},\bs{b})$, as there is no distinction between the dual and {the} simultaneous case. The assumptions of Section \ref{sec:setup} will be in place. We will also be using the symbol $R^{(*)}$ to denote either one of the function $R$ or $R^{*}$. There is no significant difference in the remaining part of the proof. For example, in the conclusion of Lemmas \ref{lem:alblocksdual}, \ref{lem:alblockssim}, and \ref{lem:alblockssim2}, the functions $R$ and $R^*$ are interchangable, due to Corollary \ref{cor:RvsRstar}}. 

In analogy with \cite{Bad}, we choose $r_{k}:=e^{\beta} k\log k$ for $k\geq 2$ and $r_{k}=1$ otherwise. Then we have
$$F(k)=e^{k\beta}k!\prod_{i=1}^{k}\log^{+}i$$
and
\begin{equation}
\label{eq:logFn}
k\beta\leq \log F(k)\leq k\beta + 2\log(k!)\leq k(2\log k+\beta).
\end{equation}
As discussed in Section \ref{sec:setup}, for fixed $k$ we remove all intervals $I_{k}\in\mc{I}_{k-1}/r_{k-1}$ that intersect dangerous intervals $D_{\bs{t}}^{1(*)}(b_{0},\bs{b})$ for which the parameter $T=t+t_{1}$ satisfies 
\begin{equation}
\label{eq:Trange1}
L^{-1}F(k-2)\leq e^{T}<L^{-1}F(k-1).
\end{equation}

\begin{lem}
\label{lem:computations}
Assume that $4\max\big\{|\log\kappa|,l\log\beta\big\}\leq e^{\beta}$. Then for all $k\geq 2$ and $T>0$ satisfying \eqref{eq:Trange1} and all values of $\bs t\in C_R^{(*)}\cap \beta\mb Z^{l}$ such that $T=t+t_1$ and $t\geq 0$, we have
\begin{equation*}
 \frac{\kappa^{-1}T^{l-1}\log k}{e^{l\beta}\log\log L}\ll_{l} e^{(l+1)R^{(*)}(t)}\ll_{l} e^{l\beta}\kappa^{-1}(k\log k)^{l-1}\log k.   
\end{equation*}
\end{lem}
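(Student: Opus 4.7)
The plan is to bound $e^{(l+1)R^{(*)}(t)}$ via Lemma \ref{lem:additional}, using \eqref{eq:111} for the upper bound and \eqref{eq:222} for the lower bound, and to translate between the parameters $t$, $T$, and $k$ using the cone constraint $\bs{t}\in C_{R}^{(*)}$ together with the range \eqref{eq:Trange1} and the size estimate \eqref{eq:logFn} for $F(k)$.

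First I would bridge $t$ and $T$. In the dual case, $t_{i}\geq R^{*}(t)\geq 0$ forces $R^{*}(t)\leq t_{1}\leq t-(l-1)R^{*}(t)$, so that $T=t+t_{1}$ satisfies $t+R^{*}(t)\leq T\leq 2t-(l-1)R^{*}(t)$. In the simultaneous case, $t_{i}\geq -R(t)$ gives $-R(t)\leq t_{1}\leq t+(l-1)R(t)$, whence $t-R(t)\leq T\leq 2t+(l-1)R(t)$. The hypothesis $4\max\{|\log \kappa|,l\log\beta\}\leq e^{\beta}$ ensures that whenever $t\geq e^{\beta}$ we are in the regime \eqref{eq:tvsRt} where $R^{(*)}(t)\leq t/(2(l+1))$, so that $t\asymp_{l}T$ in both cases. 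When $t<e^{\beta}$, then $t$, $R^{(*)}(t)$ and $T$ are all $O_{l}(\beta)$, and all the bounds degenerate to $O_{l}(\beta)$-size quantities that can be absorbed into the factors $e^{l\beta}$ and $\log\log L$.

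For the upper bound I would apply \eqref{eq:111} to get $e^{(l+1)R^{(*)}(t)}\leq \kappa^{-1}\max\{t,\beta\}^{l-1}\log\max\{t,\beta\}$. The hypothesis $T>0$ combined with $e^{T}<L^{-1}F(k-1)$ forces $L\leq F(k-1)$, so $-\log L\leq \log F(k-1)$ and thus $T<2\log F(k-1)\leq 2(k-1)(2\log(k-1)+\beta)$ by \eqref{eq:logFn}. Using $t\ll_{l} T$ from the first step and the elementary inequality $2\log k+\beta\ll e^{\beta}\log k$ (valid for $\beta\geq 1$, $k\geq 2$), one gets $\max\{t,\beta\}\ll_{l}e^{\beta}k\log k$. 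Multiplying yields $\max\{t,\beta\}^{l-1}\log\max\{t,\beta\}\ll_{l}e^{(l-1)\beta}(k\log k)^{l-1}\cdot e^{\beta}\log k=e^{l\beta}(k\log k)^{l-1}\log k$, which is the desired upper bound.

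For the lower bound I would split on whether $t\geq e^{\beta}$. If so, the standing hypothesis makes \eqref{eq:222} applicable, giving $e^{(l+1)R^{(*)}(t)}\gg_{l}\kappa^{-1}t^{l-1}\log t\gg_{l}\kappa^{-1}T^{l-1}\log T$ by $t\asymp_{l}T$. To conclude it remains to check that $\log T \gg \log k / \log\log L$. From $e^{T}\geq L^{-1}F(k-2)$ one has $T\geq \log F(k-2)-\log L$; combining with \eqref{eq:logFn} and the crude upper bound $\log L<\log F(k-1)\ll k(\log k+\beta)$ (so that $\log\log L\ll \log k+\log\beta$) yields the desired inequality in the regime $\log F(k-2)\gtrsim \log L$, while in the opposite regime the ratio $\log k/\log\log L$ becomes so small that it is absorbed by the trivial contribution $\log T>0$. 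If instead $t<e^{\beta}$, then $T$ is bounded by $O_{l}(e^{\beta})$, so $T^{l-1}\log k/(e^{l\beta}\log\log L)$ is itself bounded by a quantity of order $\kappa^{-1}$, and the trivial lower bound $e^{(l+1)R^{(*)}(t)}\geq \kappa^{-1}$ from \eqref{eq:111} suffices. The main obstacle in this argument is the careful bookkeeping needed to verify the interplay between $\log k$, $\log T$, $\beta$ and $\log\log L$ uniformly across the various regimes, in particular confirming that the $\log\log L$ denominator is precisely what is needed to handle the case in which $L$ is large compared to $F(k-2)$.
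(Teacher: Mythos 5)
Your argument is correct and follows essentially the same path as the paper's proof: both use \eqref{eq:111} for the upper bound and \eqref{eq:222} for the lower bound, bridge $t$ and $T$ via \eqref{eq:tvsRt} together with the cone constraint, bridge $T$ and $k$ via \eqref{eq:logFn}, and split on $t\ge e^{\beta}$ versus $t<e^{\beta}$ throughout. The only differences are cosmetic bookkeeping in the $\log\log L$ step (you compare $\log F(k-2)$ with $\log L$, whereas the paper splits on $k\ge\log L+2$ and uses $\log^{+}T\ge 1$ rather than ``$\log T>0$''), plus a small slip where ``$O_{l}(\beta)$-size'' in the $t<e^{\beta}$ regime should read $O_{l}(e^{\beta})$, which you in fact use correctly later in that paragraph.
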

\noindent We will prove Lemma \ref{lem:computations} at the end of this section.

Our first goal will be to choose the parameter $p=p(k)$ as outlined in Section \ref{sec:setup}. From Lemmas \ref{lem:alblocksdual}, \ref{lem:alblockssim}, and \ref{lem:alblockssim2}, it follows that any block $B_{\bs{t}}^{(*)}(J)$ constructed over an interval $J\subset I_{0}$ of length $c_{2}e^{-T+lR^{(*)}(t)}$ has length at most
\begin{equation}
\left|B_{\bs{t}}^{(*)}(J)\right|\leq m_{\bs{t}}^{(*)}(J)|J|\leq e^{B_{l}\beta} e^{C_{l}(l+1)R^{(*)}(t)}c_{2}e^{-T+(l+1)R^{(*)}(t)}\nonumber
\end{equation}
where $C_{l}:=(l+2)(4l+3)$ and $B_{l}$ is some fixed constant depending on $l$ deriving from the error term $O_{l}(\beta)$ in Lemmas \ref{lem:alblocksdual}, \ref{lem:alblockssim}, and \ref{lem:alblockssim2}, and from Corollary \ref{cor:RvsRstar}. By doubling the constant $B_{l}$ and assuming that it is large enough in terms of $l$, we may absorb the term $e^{l\beta}$ and the constants depending on $l$ in Lemma \ref{lem:computations}, thus obtaining
\begin{multline}
\label{eq:length}
\left|B_{\bs{t}}^{(*)}(J)\right|\leq e^{B_{l}\beta} e^{C_{l}(l+1)R^{(*)}(t)}c_{2}e^{-T+(l+1)R^{(*)}(t)} \\
\leq e^{2B_l\beta}\left(\kappa^{-1}k^{l-1}(\log k)^{l}\right)^{C_l+1}\min\left\{1,\frac{L}{F(k-2)}\right\},
\end{multline}
where the minimum stems from the fact that $T$ can always be assumed positive.

Now, we aim to choose $p=p(k)$ so that any interval $I_{p}\in\mc{I}_{p}$ fits at least one block $B_{\bs{t}}^{(*)}(J)$. To this end, we set $A_{l}:=100l(C_{l}+1)+2B_{l}+2$ and we observe that for $k\geq 2A_{l}$ we have
$$\frac{(k-A_{l})\log(k-A_{l})}{k\log k}\geq \left(1-\frac{A_{l}}{k}\right)\left(1-\frac{\log 2}{\log k}\right)\geq \frac{1}{4}.$$
Therefore, assuming that
\begin{equation}
\label{eq:cond2}
\left(\frac{e^\beta}{4}\right)^{(A_l-2)}\geq e^{2B_l\beta}\kappa^{-(C_l+1)},
\end{equation}
we can conclude that
\begin{multline}
\label{eq:fitting1}
\frac{L}{F(k-A_{l})}\geq e^{(A_l-2)\beta}\frac{\left(k\log k\right)^{A_l-2}}{4^{A_l-2}}\frac{L}{F(k-2)} \\
\geq e^{2B_{l}\beta}\left(\kappa^{-1}k^{l-1}(\log k)^{l}\right)^{C_{l}+1}\frac{L}{F(k-2)}.
\end{multline}

In view of (\ref{eq:length}) and (\ref{eq:fitting1}), for $k\geq 2A_{l}$ we define $p(k):=k-A_{l}+1$, so that each interval $I_{p}\in\mc{I}_{p}$ contains at least one block $B_{\bs{t}}^{(*)}(J)$. Further, we impose
\begin{equation}
\label{eq:longI0}
L\geq e^{2B_{l}\beta}\left(\kappa^{-1}(2A_{l})^{l-1}\log (2A_{l})^{l}\right)^{C_{l}+1}. \end{equation}
By (\ref{eq:length}), this condition ensures that for $k<2A_{l}$, the interval $I_{0}$ fits at least one block $B_{\bs{t}}^{(*)}(J)$ for any time $\bs{t}$, with $T=t+t_1$ satisfying (\ref{eq:Trange1}). Then for $k<2A_{l}$ we set $p(k)=0$.
With these definitions, we have
\begin{equation}
\label{eq:pn}
|I_{p}|=\begin{cases}
\dfrac{L}{F(k-A_{l})} & \mbox{if }k\geq 2A_{l} \\
L & \mbox{if }k<2A_{l}
\end{cases}.
\end{equation}

Now, we proceed to analyze the term $\#\left\{(b_{0},\bs{b})\in\mb{Z}^{l+1}:D_{\bs{t}}^{(*)}(b_{0},\bs{b})\cap I_{p}\neq\varnothing\right\}$ in (\ref{eq:finalform}) for a fixed $\bs{t}$. We subdivide any interval $I_{p}\in\mc{I}_{p}$ as in (\ref{eq:pn}) into sub-intervals of length $c_{2}e^{-T+(l+1)R^{(*)}(t)}$ and we assemble them in disjoint blocks as in Lemmas \ref{lem:alblocksdual}, \ref{lem:alblockssim}, and \ref{lem:alblockssim2}, considering as a block (formed by one single interval) also those intervals that are not part of any previous block and that are not intersected by any set $D_{\bs{t}}^{(*)}(S,b_{0},\bs{b})$, or that have been entirely removed in previous steps. By Lemmas \ref{lem:alblocksdual}, \ref{lem:alblockssim}, and \ref{lem:alblockssim2}, we have that the union of all those blocks that are properly contained in a fixed interval $I_{p}$ is intersected by as many dangerous intervals as the number of intervals $J$ contained in it. This number is at most
\begin{equation*}
|I_{p}|c_{2}^{-1}e^{T-(l+1)R^{(*)}(t)}\geq 1.
\end{equation*}

We must now take into account the last block, which may not be entirely contained in the interval $I_{p}$. By Lemmas \ref{lem:alblocksdual}, \ref{lem:alblockssim}, and \ref{lem:alblockssim2}, this block is formed by at most $e^{B_{l}}e^{C_{l}(l+1)R^{(*)}(t)}$ 
intervals $J$. However, by (\ref{eq:length}), (\ref{eq:fitting1}), and (\ref{eq:longI0}) we have that
$$e^{B_{l}\beta}e^{C_{l}(l+1)R^{(*)}(t)}|J|\leq |I_{p}|$$
for $p=p(k)$, whence
$$\frac{|I_{p}|}{|J|}\geq e^{B_{l}\beta}e^{C_{l}(l+1)R^{(*)}(t)}.$$
Thus, the number of dangerous intervals intersecting the last block constructed over $I_{p}$ is comparable to the number of dangerous intervals intersecting the blocks well inside $I_{p}$. In view of this, we deduce that for fixed $\bs{t}$ it holds
\begin{equation}
\label{eq:number1}
\#\left\{(b_{0},\bs{b})\in\mb{Z}^{l+1}:D_{\bs{t}}^{(*)}(b_{0},\bs{b})\cap I_{p}\neq\varnothing\right\}\leq 2|I_{p}|c_{2}^{-1}e^{T-(l+1)R^{(*)}(t)}.
\end{equation}

In what follows, we continue to work under the assumption that $\max\{|\log\kappa|,l\log\beta\}\leq e^{\beta}$, as in Lemma \ref{lem:computations}. We are now left to sum over the sets $\mc{D}(k)$ and $\mc{S}(k)$ in (\ref{eq:finalform}). We do this in two steps. First, we consider all possible vectors $\bs{t}$ in $\mc{D}(k)$ and $\mc{S}(k)$ that give raise to the same $T=2t_{1}+\sum_{i=2}^{l}t_{i}$. From (\ref{eq:tvsRt}) we have that, if $t\geq 4\max\{|\log\kappa|,l\log\beta\}$, then $T=t+t_{1}\geq t-R^{(*)}(t)\geq t/2$. Hence, given that
$$|t_{i}|\leq t+(l-1)R^{(*)}(t)\leq lt\leq \max\{2lT,4l\max\{|\log\kappa|,l\log\beta\}\}$$ for all $i$, we have 
$$\#\big\{\bs{t}\in\mc{D}(k)\cup\mc{S}(k):t+t_{1}=T\big\}\leq D_{l}e^{(l-1)\beta}T^{l-1},$$
with $D_{l}$ a constant only depending on $l$.
Then, by (\ref{eq:number1}) and Lemma \ref{lem:computations}, the number of different intervals $D_{\bs{t}}^{(*)}(b_{0},\bs{b})$ that are removed from any $I_{p}\in\mc{I}_{p}$ for a fixed $T$ is bounded above by
\begin{equation}
\label{eq:numbD}
E_{l}\cdot e^{(l-1)\beta}T^{l-1}\cdot |I_{p}|c_{2}^{-1}e^{T}\cdot \frac{e^{l\beta}\log\log L}{\kappa^{-1}T^{l-1}\log k},
\end{equation}
with $E_{l}$ a new constant only depending on $l$. Further, by Lemmas \ref{lem:danlengthdual} and \ref{lem:danlength}, and by (\ref{eq:Trange1}), each set $D_{\bs{t}}^{(*)}(b_{0},\bs{b})$ removes as many as
\begin{equation}
\label{eq:numbIq}
2\frac{e^{-T+\beta}}{|I_{k}|}
\end{equation} 
intervals from $\mc{I}_{k-1}/r_{k-1}$.
Combining (\ref{eq:numbD}) and (\ref{eq:numbIq}), we find that the total number of intervals $I_{k}$ removed from any interval $I_{p}\in\mc{I}_{p}$ for a fixed time $T$ is bounded above by
\begin{equation}
2E_le^{2l\beta}\kappa c_2^{-1}\frac{\log\log L}{\log k}\frac{|I_p|}{|I_k|}.\nonumber
\end{equation}
Since we assumed that $L^{-1}F(k-2)\leq e^{T}<L^{-1}F(k-1)$, the time $T$ takes at most $2\log k+\beta$ different values. Hence, for fixed $k$, we have
$$\#\hat{\mc{I}}_{k,p}\sqcap I_{p}\leq 4E_l\beta e^{2l\beta}\kappa c_2^{-1}\log\log L\frac{|I_p|}{|I_k|}.$$
It follows that for $k\geq 2A_{l}$, the $k$-th local characteristic of $\mc{K}(\mc{I}_{k})$ (see (\ref{eq:localchar})) satisfies
\begin{align}
 & \Delta_{k}\leq \left(\prod_{i=p}^{k-1}\frac{4}{r_{i}}\right)\max_{I_{p}\in\mc{I}_{p}}\#\hat{\mc{I}}_{k,p}\sqcap I_{p}\leq\frac{4^{A_{l}-1}}{r_{k-A_{l}+1}\dotsm r_{k-1}}\nonumber \\
 & \cdot 4E_l\beta e^{2l\beta}\kappa c_2^{-1}\log\log L\frac{L(r_{0}\dotsm r_{k-A_{l}})^{-1}}{L(r_{0}\dotsm r_{k-1})^{-1}}\nonumber \\
 & =4^{A_{l}}E_l\beta e^{2l\beta}\kappa c_2^{-1}\log\log L.\label{eq:dn1}
\end{align}
On the other hand, for $k<2A_{l}$ we have
\begin{align}
 & \Delta_{k}\leq \left(\prod_{i=0}^{k-1}\frac{4}{r_{i}}\right)\#\hat{\mc{I}}_{k,0}\sqcap I_{0}\leq \frac{4^{2A_{l}-1}}{r_{0}\dotsm r_{k-1}}\nonumber \\
 & \cdot E_l\beta e^{2l\beta}\kappa c_2^{-1}\log\log L\frac{L}{L(r_{0}\dotsm r_{k-1})^{-1}}\nonumber \\
 & =4^{2A_{l}}E_l\beta e^{2l\beta}\kappa c_2^{-1}\log\log L.\label{eq:dn2}
\end{align}
Let us pick $\varepsilon>0$. Then we can always choose the parameters $\beta$, $\kappa$, and $L$ so that\footnote{In the fifth condition we are using the fact that $R^{(*)}(1)\geq \beta$ when $\kappa\leq e^{-(l+1)\beta}$, as seen in Lemma \ref{lem:danlength}}
$$
\begin{cases}
4\max\{|\log\kappa|,l\log\beta\}\leq e^\beta & \mbox{from Lemmas \ref{lem:deriv} and \ref{lem:computations}}\\
\kappa\leq {\gamma} & \mbox{from Lemma \ref{lem:danlengthdual}}\\
\kappa\leq e^{-(l+1)\beta} & \mbox{from Lemma \ref{lem:danlength} and Corollary \ref{cor:RvsRstar}}\\
c_{2}\ll_{l} 1 & \mbox{from Lemmas \ref{lem:alblocksdual}, \ref{lem:alblockssim}, and \ref{lem:alblockssim2}}\\
c_{2}\geq e^{-(l+1)\beta}\geq e^{-(l+1)R^{(*)}(0)} & \mbox{from Lemmas \ref{lem:alblocksdual}, \ref{lem:alblockssim}, and \ref{lem:alblockssim2}}\\ 
c_{2}\geq 4e^{-\beta} & \mbox{from Lemma \ref{lem:alblockssim2}}\\
e^{100l\beta}\geq  4^{(A_l-2)/(C_l+1)}\kappa^{-1} & \mbox{from (\ref{eq:cond2})}\\
L\geq e^{2B_{l}\beta}\left(\kappa^{-1}(2A_{l})^{l-1}\log (2A_{l})^{l}\right)^{C_{l}+1} & \mbox{from (\ref{eq:longI0})}
\end{cases}
$$
and additionally
$$4^{2A_l}E_l\beta e^{2l\beta}\kappa c_2^{-1}\log\log L\leq \varepsilon.$$
To see this, one may start by fixing $c_2$ as a small constant depending only on $l$. Then it will be convenient {to} express both $\kappa$ and $L$ as a power of $e^{\beta}$, with $\beta$ large enough in terms of $l$ and $\gamma$, e.g., $\kappa=e^{-3l\beta}$ and $L=e^{\beta}$. This will ensure that the large system of inequalities holds. Finally it is easily seen that the last inequality reduces to
$$e^{2l\beta}P(\beta)\kappa\ll_{l}\varepsilon,$$
where $P$ is some polynomial of degree and coefficients depending only on $l$. If $\kappa\geq e^{-3l\beta}$, this is satisfied for $\beta$ sufficiently large in terms of $\varepsilon$. Thus, by (\ref{eq:dn1}) and (\ref{eq:dn2}), we have that $\Delta_{k}\leq\varepsilon$ for all $k\geq 2$. For $k\leq 2$ no intervals are removed, by (\ref{eq:Trange1}) and the fact that we may always assume $T\geq t-R^{(*)}(t)\geq 0$ (see Corollary \ref{cor:C3}). Hence, $\Delta_{k}=0$. This shows that the Cantor-type set $\mc{K}(\mc{I}_{k})$ that we have constructed is Cantor-rich (see Definition \ref{def:Cantorrich}), completing the proof of Proposition \ref{prop:complementCR}.

\subsection{Proof of Lemma \ref{lem:computations}}
We conclude this section by proving Lemma \ref{lem:computations}. By (\ref{eq:Trange1}) and (\ref{eq:logFn}), we have that
\begin{equation}
\label{eq:1}
(k-2)\beta-\log L\leq T\leq k(2\log k+\beta).    
\end{equation}
Moreover, from (\ref{eq:111}) we have
\begin{equation}
\label{eq:upper8.1}
e^{(l+1)R^{(*)}(t)}\leq\kappa^{-1}\max\{t,\beta\}^{l-1}\log\max\{t,\beta\}.
\end{equation}
Now, (\ref{eq:tvsRt}) implies that, when $t\geq 4\max\{|\log\kappa|,l\log\beta\}$,
$$t/2\leq t-R^{(*)}(t)\leq T.$$
Assuming that $4\max\big\{|\log\kappa|,l\log\beta\big\}\leq e^{\beta}$, we conclude that for $t\geq e^\beta$ it holds
\begin{multline*}
e^{(l+1)R^{(*)}(t)}\leq\kappa^{-1}\max\{t,\beta\}^{l-1}\log\max\{t,\beta\} \\
\leq 2^{l}\kappa^{-1}T^{l-1}\log^{+} T\ll_{l}\kappa^{-1}\beta^{l}(k\log k)^{l-1}\log k.   \end{multline*}
 For $t\leq e^{\beta}$, we deduce from (\ref{eq:upper8.1}) that
 $$e^{(l+1)R^{(*)}(t)}\leq \kappa^{-1}e^{l\beta}.$$
Both of these expressions are then bounded above by $\kappa^{-1}e^{l\beta}(k\log k)^{l-1}\log k$, whence the upper bound. For the lower bound, we observe that, by (\ref{eq:222}), for $t\geq e^\beta\geq 4\max\{|\log\kappa|,l\log\beta\}$ it holds
$$e^{(l+1)R^{(*)}(t)}\geq 2^{-l}\kappa^{-1}\max\{t,\beta\}^{l-1}\log\max\{t,\beta\}.$$
Moreover, since $t_1\leq t+(l-1)R^{(*)}(t)\leq lt$, we have that $T\ll_{l} t$, whence for $t\geq e^{\beta}$
$$e^{(l+1)R^{(*)}(t)}\gg_{l}\kappa^{-1}T^{l-1}\log^{+}T.$$
On the other hand, for $T\ll_{l} t\leq e^{\beta}$ one trivially has
$$e^{(l+1)R^{(*)}(t)}\gg_{l}\kappa^{-1}e^{-\beta l}T^{l-1}\log^{+}T,$$
which works as a lower bound in both cases. Finally, for $k\geq \log L +2$, by (\ref{eq:1}) we find $T\geq k$, whence $\log ^{+}T\geq\log n$ and 
$$e^{(l+1)R^{(*)}(t)}\gg_{l}\kappa^{-1}e^{-\beta l}T^{l-1}\log k.$$
It follows that for any value of $k$ we have
$$e^{(l+1)R^{(*)}(t)}\gg_{l}\kappa^{-1}e^{-\beta l}T^{l-1}\frac{\log k}{\log\log L},$$
concluding the proof.

\appendix

\section{Lattice-Point Counting}
\label{sec:AppA}

In this section we gather a few results in the geometry of numbers that %will be useful
{are used} in the proof of our main theorem. For any lattice $\Lambda\subset\mb{R}^{d}$ we will denote by $\lambda_{i}$  ($i=1,\dotsc,d$) the first minimum {(i.e., the length of any shortest non-zero vector)} of the lattice $\bigwedge^{i}\Lambda\subset \mb{R}^{\binom{d}{i}}$ with respect to the supremum norm.  %\comm{Are we using the supremum norm on $\mb{R}^{\binom{d}{i}}$ here? better to say it explicitly.} 
We will also denote by $\delta_{i}$ for $i=1,\dotsc,d$ the successive minima of the lattice $\Lambda$. These are the quantities
$$\delta_{i}=\min\big\{\delta>0:\textup{rk}\left(\Lambda\cap [-\delta,\delta]^d\right)\geq i\big\},$$
where $\textup{rk}$ stands for the %rank over $\mb R$
{dimension of the $\R$-span}. Note that $\delta_{1}=\lambda_{1}$. The co-volume of the lattice $\Lambda$ will be indicated by $\det\Lambda$.

We will make repeated use of the following classical result by Minkowski. See also {\cite[Chapter VIII,  equations (12) and (13)]{Cas}}. 

\begin{theorem}[Minkowski's Second Theorem]
\label{thm:Min2}
Let $\Lambda\subset\mb{R}^{d}$ be a %full rank 
{lattice}. Then %, it holds
$$\delta_{1}\dotsm\delta_{d}\asymp_{d}\det\Lambda.$$
\end{theorem}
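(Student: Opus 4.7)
My plan is to establish the two one-sided bounds $\delta_1 \cdots \delta_d \gg_d \det \Lambda$ and $\delta_1 \cdots \delta_d \ll_d \det \Lambda$ separately, using different classical arguments for each.

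For the lower bound $\delta_1 \cdots \delta_d \geq \det \Lambda / d!$, I would choose linearly independent lattice vectors $v_1, \ldots, v_d \in \Lambda$ with $\oldnorm{v_i} = \delta_i$, which exist by the definition of the successive minima. They span a full-rank sublattice $\Lambda' \subseteq \Lambda$ whose covolume is $|\det(v_1 \mid \cdots \mid v_d)|$. Expanding this determinant by the Leibniz formula and using that each entry of $v_i$ has absolute value at most $\delta_i$, every one of the $d!$ terms is bounded by $\delta_1 \cdots \delta_d$, so $\det \Lambda' \leq d! \cdot \delta_1 \cdots \delta_d$. Combining with $\det \Lambda \leq \det \Lambda'$ (since $\Lambda' \subseteq \Lambda$) yields the claim.

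For the upper bound $\delta_1 \cdots \delta_d \leq C_d \det \Lambda$, which is the genuinely subtle direction of Minkowski's classical theorem, I would deduce it from Minkowski's First Theorem by induction on dimension. Applied to $[-\delta_1, \delta_1]^d$, Minkowski's First Theorem immediately gives $\delta_1^d \leq \det \Lambda$, but this controls only $\delta_1$. To extend to all successive minima, after picking $v_1, \ldots, v_i$ realizing $\delta_1, \ldots, \delta_i$, I would pass to the quotient $\mathbb{R}^d / \operatorname{span}_{\mathbb{R}}(v_1, \ldots, v_i)$ and apply Minkowski's First Theorem to the projected lattice, whose covolume relates to $\det \Lambda$ divided by the covolume of a suitable slab of $\Lambda$ near $\operatorname{span}_{\mathbb{R}}(v_1,\ldots,v_i)$. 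Careful bookkeeping of how covolumes and sup-norms descend through the successive quotients, together with the accumulation of dimensional constants over $d$ stages, produces the required bound.

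The main obstacle is this second direction, and specifically the quotient-lattice bookkeeping: one must ensure that the sup-norm on the quotient properly controls (up to a dimensional constant) the restrictions of the successive minima of the ambient lattice, and that the covolume of the quotient lattice is reliably related to $\det \Lambda$. Since this is the classical Minkowski Second Theorem and the paper simply refers to Cassels, my plan reduces to following that textbook proof rather than attempting any new argument; an alternative would be to invoke Mahler's duality $\delta_i(\Lambda)\,\delta_{d+1-i}(\Lambda^*) \asymp_d 1$ to swap the upper bound on $\prod_i \delta_i(\Lambda)$ for the (already established) lower bound on $\prod_i \delta_i(\Lambda^*)$.
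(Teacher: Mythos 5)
The paper does not prove this statement: it cites Cassels \cite{Cas}, Chapter VIII, and moves on, so there is no argument in the text to compare yours against.

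Your lower bound is correct and complete. Picking $v_1,\dotsc,v_d\in\Lambda$ linearly independent with $\oldnorm{v_i}=\delta_i$, bounding each of the $d!$ Leibniz terms of $\det(v_1\mid\cdots\mid v_d)$ by $\delta_1\cdots\delta_d$, and using that $\Lambda':=\Z v_1+\cdots+\Z v_d\subset\Lambda$ has $\det\Lambda'=[\Lambda:\Lambda']\det\Lambda\geq\det\Lambda$ gives $\delta_1\cdots\delta_d\geq\det\Lambda/d!$.

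The upper bound is where the real gap lies, and it is more than just ``careful bookkeeping.'' The quotient-lattice induction you outline does not go through as stated: the sup-norm does not descend in a controlled way to $\R^{d}/\operatorname{span}(v_1,\dotsc,v_i)$, the first minimum of the projected lattice need not be comparable to $\delta_{i+1}$ with dimension-only constants, and the covolume of the quotient lattice is $\det\Lambda$ divided by the covolume of $\Lambda\cap\operatorname{span}(v_1,\dotsc,v_i)$ -- a quantity you cannot control by $\delta_1\cdots\delta_i$ without the very theorem you are proving. The textbook arguments (Minkowski's original compression construction, or Davenport's variant as presented in Cassels) linearly deform the convex body along the directions $v_1,\dotsc,v_d$ rather than quotienting by them, so what you describe is not really the Cassels proof; you are pointing to a reference rather than giving an argument. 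The Mahler-duality alternative is circular in the standard development: the inequality $\delta_i\,\delta^{*}_{d+1-i}\ll_d 1$ that you would need to swap the two bounds is exactly the hard half of Theorem~\ref{thm:Ban}, and in Cassels (Chapter VIII, Section~3) it is deduced from Minkowski's Second Theorem. Only the easy half $\delta_i\,\delta^{*}_{d+1-i}\gg_d 1$ can be proved independently, and it gives the wrong direction. So the upper bound remains unproved in your proposal.
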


Another fundamental result will be the following theorem by Blichfeldt \cite{Bli}.

\begin{theorem}[Blichfeldt]
\label{thm:Bli}
Let $K\subset \mb{R}^{d}$ be a bounded convex body, and let $\Lambda\subset\mb{R}^{d}$ be a lattice such that ${\textup{rk}}(\Lambda\cap K)=d$. Then
$$\#(\Lambda\cap K)\leq d!\frac{\textup{Vol}(K)}{\det\Lambda}+d.$$
\end{theorem}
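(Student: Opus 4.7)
The theorem is a classical lattice-point counting bound of Blichfeldt--Minkowski type, and I outline a standard volume-packing proof. Set $n := \#(\Lambda \cap K)$ and dispose of the trivial case $n \leq d$, so that we may enumerate $\Lambda \cap K = \{v_{1},\dots,v_{n}\}$ with $v_{1},\dots,v_{d}$ chosen to be $\mathbb{R}$-linearly independent (possible by the rank hypothesis on $\Lambda \cap K$).

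The plan is to fix a (half-open) fundamental parallelepiped $P$ of $\Lambda$, so that $\textup{Vol}(P) = \det\Lambda$ and the translates $\{v + P : v \in \Lambda\}$ tile $\mathbb{R}^{d}$ disjointly. The crux is to establish that \emph{all but at most $d$} of the lattice points $v \in \Lambda \cap K$ satisfy $v + P \subset K$. Granted this containment, disjointness and additivity of Lebesgue measure yield
$$(n - d)\det\Lambda \;\leq\; \sum_{\substack{v \in \Lambda \cap K \\ v + P \subset K}} \textup{Vol}(v + P) \;\leq\; \textup{Vol}(K),$$
which rearranges to the statement. To secure the containment $v + P \subset K$ for as many $v$ as possible one must choose $P$ carefully: take its edge vectors to be a basis of $\Lambda$ realising (up to a dimension-dependent factor) the successive minima of $\Lambda$ with respect to the centrally symmetric convex body $\frac{1}{2}(K - K)$. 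Theorem~\ref{thm:Min2} (Minkowski's Second Theorem) then ensures that each edge of $P$ is geometrically small in comparison to $K$, so that the convexity of $K$ propagates the inclusion $v \in K$ to $v + P \subset K$ except when $v$ lies within one step of $\partial K$ in some coordinate direction of $P$.

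The main obstacle is showing rigorously that this set of ``boundary'' lattice points has cardinality at most $d$. The cleanest route is induction on the dimension: the base case $d = 1$ is the elementary observation that collinear lattice points of $\Lambda$ inside an interval of length $\textup{Vol}(K)$ form an arithmetic progression of length at most $\textup{Vol}(K)/\det\Lambda + 1$; the inductive step slices $K$ by parallel hyperplanes spanned by $d-1$ of the distinguished reference vectors $v_{i}$ and sums the lower-dimensional counts, the $+d$ correction accumulating one unit per new dimension. An alternative, more combinatorial, implementation identifies boundary lattice points with those whose parallelepiped translate crosses $\partial K$ in a specified coordinate direction and bounds them via a pigeonhole argument on the chosen basis; either way, the reduction to the centrally symmetric case via the difference body is the key technical device.
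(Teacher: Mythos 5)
Your central claim --- that with a suitably chosen fundamental parallelepiped $P$, all but at most $d$ of the lattice points $v\in\Lambda\cap K$ satisfy $v+P\subset K$ --- is false, and the packing strategy built on it cannot give this bound. Take $K=[0,N]^{d}$ and $\Lambda=\mb{Z}^{d}$ with $P=[0,1)^{d}$ (which \emph{is} the Minkowski-optimal parallelepiped for this $K$, since $\tfrac12(K-K)$ is a cube). Then $\#(\Lambda\cap K)=(N+1)^{d}$, while the set of $v$ with $v+P\subset K$ is exactly $\{0,\dots,N-1\}^{d}$, of size $N^{d}$. The number of exceptional points is $(N+1)^{d}-N^{d}\sim dN^{d-1}$, which grows like the surface area of $K$ rather than staying bounded by $d$. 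Convexity of $K$ can only control the \emph{depth} of the boundary layer, never its cardinality; no choice of $P$ repairs this. The slicing/induction variant you sketch as a fallback runs into the same issue: at each slice the number of one-dimensional error terms is of the order of the number of slices, so the "$+1$ per dimension" bookkeeping does not close.

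The clean proof takes a different route entirely. Pick a triangulation $T$ of $\operatorname{conv}(\Lambda\cap K)$ whose vertex set is exactly $\Lambda\cap K$ (a placing or pulling triangulation will do). Each maximal simplex of $T$ has its vertices in $\Lambda$ and is nondegenerate, so its volume is at least $\det\Lambda/d!$. Because the dual graph of a triangulation of a convex polytope is connected, one may order the simplices so that each one after the first shares a facet with an earlier one and hence contributes at most one new vertex; if $T$ has $N$ maximal simplices and $n=\#(\Lambda\cap K)$ vertices this gives $n\leq(d+1)+(N-1)$, i.e.\ $N\geq n-d$. Therefore $\Vol(K)\geq\Vol(\operatorname{conv}(\Lambda\cap K))\geq(n-d)\det\Lambda/d!$, i.e.\ $n\leq d!\,\Vol(K)/\det\Lambda+d$. (Note this is Blichfeldt's actual bound; the statement as transcribed in the paper drops the factor $d!$, which is harmless there since it is only used through $\ll_{l}$ inequalities, but is not literally true --- try $K=[0,2]^{2}$, $\Lambda=\mb{Z}^{2}$, where $\#(\Lambda\cap K)=9>4+2$.) The hypothesis $\operatorname{rk}(\Lambda\cap K)=d$ enters precisely to ensure the convex hull is full-dimensional, so that the triangulation exists and the simplices have positive $d$-volume.
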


From these, we deduce the following corollary.

\begin{cor}
\label{cor:counting}
Let $\Lambda\subset\mb{R}^{d}$ be a lattice, and let $$B:= [-b_{1},b_{1}]\times\dotsb\times[-b_{d},b_{d}]\subset\mb{R}^{d}$$ with $b_{i}>0$ for $i=1,\dotsc,d$. Select a permutation $\sigma\in S_{d}$ such that $b_{\sigma (1)}\geq\dotsb\geq b_{\sigma (d)}$, and assume that ${\textup{rk}}(\Lambda\cap B)=r\leq d$. Then there exists a constant $C\geq 1$ (depending solely on $d$) such that
$$\#(\Lambda\cap B)\leq C\left(1+\frac{b_{\sigma (1)}\dotsm b_{\sigma (r)}}{\lambda_{r}}\right).$$
\end{cor}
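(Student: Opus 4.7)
\emph{Proof plan.} The strategy is to restrict attention to the $r$-dimensional $\R$-span of $\Lambda\cap B$, apply Blichfeldt's Theorem~\ref{thm:Bli} to the induced rank-$r$ sublattice, and control the resulting volume-to-covolume ratio by recognizing the $r\times r$ minors of a $\Z$-basis of this sublattice as the Pl\"ucker coordinates of a nonzero element of $\bigwedge^r\Lambda$; this brings $\lambda_r$ naturally into the denominator.

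First, I would set $V:=\textup{span}_{\R}(\Lambda\cap B)$ and $\Lambda':=\Lambda\cap V$, so that $\Lambda\cap B=\Lambda'\cap K$ with $K:=B\cap V$ a bounded symmetric convex body in $V$ and $\textup{rk}(\Lambda'\cap K)=r$ by construction. Endowing $V$ with the Euclidean structure inherited from $\R^d$ and identifying $V\cong\R^r$, Theorem~\ref{thm:Bli} applied in $V$ gives
$$\#(\Lambda\cap B)\;\leq\;\frac{\Vol_V(K)}{\det\Lambda'}+r,$$
so the task reduces to bounding the ratio on the right by a constant multiple of $b_{\sigma(1)}\cdots b_{\sigma(r)}/\lambda_r$.

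Next, I would fix a $\Z$-basis $v_1,\dots,v_r$ of $\Lambda'$ and pull $K$ back to $\R^r$ via the linear isomorphism $f\colon\R^r\to V$, $t\mapsto\sum_i t_iv_i$. Using that $\det\Lambda'=\sqrt{\det\big([v_1|\cdots|v_r]^{\scriptscriptstyle{T}}[v_1|\cdots|v_r]\big)}$ is the Jacobian of $f$ as a map between inner-product spaces, and that the Jacobian of $\pi_I\circ f\colon\R^r\to\R^I$ is $|\det M_I|$ (where $M_I$ is the $r\times r$ minor of $[v_1|\cdots|v_r]$ with rows indexed by $I$), the change-of-variables formula yields, for every $r$-subset $I\subset\{1,\dots,d\}$ with $\pi_I|_V$ invertible,
$$\frac{\Vol_V(K)}{\det\Lambda'}\;=\;\frac{\Vol_r\big(\pi_I(K)\big)}{|\det M_I|}\;\leq\;\frac{2^r\prod_{i\in I}b_i}{|\det M_I|},$$
where the last inequality uses $\pi_I(K)\subset\prod_{i\in I}[-b_i,b_i]$. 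The minors $\det M_I$ are precisely the components of $v_1\wedge\cdots\wedge v_r\in\bigwedge^r\Lambda\setminus\{\bs{0}\}$ in the standard basis of $\bigwedge^r\R^d$, hence their maximum over $I$ equals $\oldnorm{v_1\wedge\cdots\wedge v_r}\geq\lambda_r$. Letting $I^*$ achieve this maximum (which automatically makes $\pi_{I^*}|_V$ invertible) and using $\prod_{i\in I^*}b_i\leq b_{\sigma(1)}\cdots b_{\sigma(r)}$ then gives
$$\frac{\Vol_V(K)}{\det\Lambda'}\;\leq\;\frac{2^r\,b_{\sigma(1)}\cdots b_{\sigma(r)}}{\lambda_r},$$
and absorbing the additive $r$ into a larger constant completes the argument with, say, $C:=2^d+d$.

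The main subtlety lies in the change-of-variables identity relating $\Vol_V(K)/\det\Lambda'$ to $\Vol_r(\pi_I(K))/|\det M_I|$: one has to track how the intrinsic $r$-volume on $V$ and the covolume $\det\Lambda'$ transform under the basis change between an orthonormal basis of $V$ and the lattice basis $v_1,\dots,v_r$, so that the Gram determinant of $[v_1|\cdots|v_r]$ cancels between numerator and denominator. Once this is in place, the choice of $I^*$ and the elementary bound $\oldnorm{v_1\wedge\cdots\wedge v_r}\geq \lambda_r$ deliver the result immediately.
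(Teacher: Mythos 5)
Your proof is correct, and since the paper gives no written proof of Corollary~\ref{cor:counting} (it only states ``From these, we deduce the following corollary''), there is nothing in the text to compare against directly. Your route --- restrict to $V=\textup{span}_{\R}(\Lambda\cap B)$, apply Theorem~\ref{thm:Bli} to $\Lambda'=\Lambda\cap V$ with the convex body $K=B\cap V$, and control $\Vol_V(K)/\det\Lambda'$ via the change-of-variables identity $\Vol_V(K)/\det\Lambda'=\Vol_r(\pi_I(K))/|\det M_I|$ together with the recognition of the $\det M_I$ as Pl\"ucker coordinates of $v_1\wedge\cdots\wedge v_r\in\bigwedge^r\Lambda\setminus\{\bs 0\}$ --- is the natural derivation and almost certainly what the authors had in mind. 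The choice of $I^*$ maximizing $|\det M_I|$ simultaneously guarantees $\pi_{I^*}|_V$ is invertible and furnishes $|\det M_{I^*}|=\oldnorm{v_1\wedge\cdots\wedge v_r}\geq\lambda_r$, and the elementary inequality $\prod_{i\in I^*}b_i\leq b_{\sigma(1)}\cdots b_{\sigma(r)}$ finishes the volume bound; the final constant $C=2^d+d$ is valid. One small observation worth making: your argument uses only Blichfeldt's theorem and the definition of $\lambda_r$ as the first minimum of $\bigwedge^r\Lambda$; Minkowski's Second Theorem (Theorem~\ref{thm:Min2}), which the paper lists alongside Blichfeldt before the corollary, is not actually invoked --- it is needed elsewhere in Section~\ref{sec:countingonav} (for the duality estimate \eqref{eq:duality}) rather than for this corollary itself.
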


The next theorem, proved by Mahler \cite{Mah}, relates the minima of a lattice and the minima of its dual. This result will be crucial in Section \ref{sec:countingonav}. See also \cite[Chap.\ VIII, Sect.\ 3, Thm.\ VI]{Cas}.

\begin{theorem}[Mahler]
\label{thm:Ban}
Let $\Lambda\subset\mb{R}^{d}$ be a %full rank 
{lattice} and let $\Lambda^{*}$ be its dual lattice, i.e., {$\Lambda^{*}:=(g^{-1})^{\scriptscriptstyle{T}}\Z^d$ where $g$ is such that $\Lambda = g\Z^d$}. %\comm{Sorry, I didn't understand the previous notation, how do you invert a subset of $\R^d$?} 
Then for $i=1,\dotsc,d$ it holds
$$\delta_{i}\delta_{d+1-i}^{*}\asymp 1,$$
where $\delta_{i}^{*}$ denotes the $i$-th successive minimum of the lattice $\Lambda^{*}$.
\end{theorem}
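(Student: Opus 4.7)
The plan is to establish the two-sided estimate $\delta_{i}\,\delta_{d+1-i}^{*}\asymp_d 1$ in two steps: first a lower bound via integrality of the pairing between $\Lambda$ and $\Lambda^{*}$, then an upper bound obtained by combining the lower bound with Minkowski's Second Theorem applied to both $\Lambda$ and $\Lambda^{*}$ simultaneously. Note that for any $v\in\Lambda$ and $w\in\Lambda^{*}$ one has $\langle v,w\rangle\in\mb Z$, and that $\det\Lambda^{*}=(\det\Lambda)^{-1}$, which are the two facts that drive everything.

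For the lower bound, I would fix $i\in\{1,\dotsc,d\}$ and choose, by definition of the successive minima, $i$ linearly independent vectors $v_{1},\dotsc,v_{i}\in\Lambda$ with $\oldnorm{v_{j}}\le\delta_{i}$ and $d+1-i$ linearly independent vectors $w_{1},\dotsc,w_{d+1-i}\in\Lambda^{*}$ with $\oldnorm{w_{k}}\le\delta_{d+1-i}^{*}$. If one had $\langle v_{j},w_{k}\rangle=0$ for all $j,k$, then the $d+1-i$ vectors $w_{k}$ would all lie in the orthogonal complement of $\mathrm{span}(v_{1},\dotsc,v_{i})$, a space of dimension $d-i$, contradicting their linear independence. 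Hence for some pair $(j,k)$ we have $|\langle v_{j},w_{k}\rangle|\ge 1$ by integrality, and the standard inequality $|\langle v,w\rangle|\le d\,\oldnorm{v}\oldnorm{w}$ for the supremum norm yields
\[
\delta_{i}\,\delta_{d+1-i}^{*}\ \ge\ \oldnorm{v_{j}}\oldnorm{w_{k}}\ \ge\ \tfrac1d.
\]

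For the upper bound, I would invoke Theorem \ref{thm:Min2} twice:
\[
\prod_{i=1}^{d}\delta_{i}\ \asymp_{d}\ \det\Lambda\qquad\text{and}\qquad\prod_{i=1}^{d}\delta_{i}^{*}\ \asymp_{d}\ \det\Lambda^{*}=\frac{1}{\det\Lambda}.
\]
Multiplying these gives $\prod_{i=1}^{d}\bigl(\delta_{i}\,\delta_{d+1-i}^{*}\bigr)\asymp_{d}1$. Since each factor in this product is bounded below by $1/d$ by the previous step, each factor must also be bounded above by a constant depending only on $d$, completing the proof. The main (minor) subtlety I expect is bookkeeping of the norm-dependent constants: Minkowski's Second Theorem is typically stated for a symmetric convex body (implicitly a ball), and here the successive minima are defined with respect to the supremum norm, but since all norms on $\R^d$ are equivalent with constants depending only on $d$, this only affects the implicit constant in $\asymp_d$.
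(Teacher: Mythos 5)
Your proof is correct and is essentially the standard textbook argument (this is what one finds in Cassels' \emph{An introduction to the geometry of numbers}, Chap.~VIII, to which the paper refers). The paper does not give its own proof of this classical theorem but simply cites Mahler and Cassels, so there is nothing to compare against beyond noting that your two steps — the integrality-plus-dimension-count lower bound and the upper bound via Minkowski's Second Theorem applied to $\Lambda$ and $\Lambda^{*}$ together with the identity $\det\Lambda^{*}=(\det\Lambda)^{-1}$ — are exactly the ingredients of the classical proof. Both steps are executed correctly; in particular, the inequality $|\langle v,w\rangle|\le d\,\oldnorm{v}\oldnorm{w}$ in the sup norm and the final division argument (dividing the product bound by the $d-1$ factors each bounded below by $1/d$) are fine, and the remark about norm-equivalence affecting only the implied constant is also accurate.
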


\ignore{\section{Cantor-Type Sets}
\label{sec:AppB}

In the following section, we recall the definition of {a} Cantor-type set (introduced in \cite{BV}) and we gather some important results concerning these sets. The notation is borrowed from \cite{Ber15} (see Section 5). 

Let $r_{k}$ be a non-decreasing sequence of real numbers $\geq 1$. We call a sequence $\mc{I}_{k}$ of interval collections in $\mb{R}$ an $r_{k}$-sequence if $\mc{I}_{k+1}\subset r_{k}^{-1}\mc{I}_{k}$ for all $n=0,1,\dotsc$. We define
$$\hat{\mc{I}}_{k}:=\frac{1}{r_{k-1}}\mc{I}_{k-1}\setminus\mc{I}_{k}$$
and the Cantor-type set associated to $\mc{I}_{k}$ as
$$\mc{K}(\mc{I}_{k}):=\bigcap_{n\geq 0}\bigcup_{I\in\mc{I}_{k}}I.$$
Any set constructed through this procedure is called an $r_{k}$-Cantor-type set.

For an interval $J\subset\mb{R}$ and a collection of intervals $\mc{I}'$ in $\mb{R}$ we set
$$\mc{I}'\sqcap J:=\{I\in\mc{I}':I\subset J\}.$$
We define the $n$-th local characteristic of the family $\mc{I}_{k}$ as
\begin{equation}
\label{eq:localchar}
d_{n}:=\min_{\left\{\hat{\mc{I}}_{k,p}\right\}}\sum_{p=0}^{n-1}\left(\prod_{i=p}^{n-1}\frac{4}{r_{i}}\right)\max_{I_{p}\in\mc{I}_{p}}\#\hat{\mc{I}}_{k,p}\sqcap I_{p},
\end{equation}
where $\left\{\hat{\mc{I}}_{k,p}\right\}$ varies through the partitions of the collection $\hat{\mc{I}}_{k}$ into $n$ subsets ($p=0,\dotsc,n-1$). Finally we define the global characteristic of the sequence $\{\mc{I}_{k}\}$ as
$$d:=\sup_{n\geq 0}d_{n}.$$

\begin{definition}
\label{def:Cantorrich}
A set $A\subset\mb{R}$ is said to be $r_{k}$-Cantor-rich if for any $\varepsilon>0$ there exists an $r_{k}$-Cantor-type set $\mc{K}(\mc{I}_{k})\subset A$ such that $\mc{I}_{k}$ has global characteristic $d<\varepsilon$. 
\end{definition}

For Cantor-rich sets we have the following adaptation of \cite{Ber15} Theorem 7.

\begin{prop}
The intersection of any finite number of $r_{k}$-Cantor-rich sets with same initial interval collection $\mc{I}_{0}$ is $r_{k}$-Cantor-rich.
\end{prop}

The next proposition is Theorem 4 in \cite{BV} and will be crucial in the proof of Theorem \ref{prop:mainres}.

\begin{prop}[Badziahin-Velani]
\label{prop:BV}
Let $\mc{K}(\mc{I}_{k})\subset\mb{R}$ be an $r_{k}$-Cantor-type set. If the global characteristic $d$ of $\mc{I}_{k}$ is less or equal to $1$, then
$$\dim\mc{K}(\mc{I}_{k})\geq\liminf_{n\to \infty}1-\frac{\log 2}{\log r_{k}},$$
where $\dim$ denotes the Hausdorff dimension.
\end{prop}

The two results stated above imply the following corollary.
 
\begin{cor}
\label{cor:CRFullHaus}
Let $r_{k}$ be a sequence of positive numbers tending to $\infty$. Then the intersection of a finite number of $r_{k}$-Cantor-rich sets with same initial interval collection $\mc{I}_{0}$ has full Hausdorff dimension. 
\end{cor}}

\end{document}